\tikzset{node distance=2cm, auto}
\def\Empty{}
\newcommand\oplabel[1]{
  \def\OpArg{#1} \ifx \OpArg\Empty {} \else
    \label{#1}
  \fi}
\long\def\realfig#1#2#3#4{
\begin{figure}[t]
\centerline{\psfig{figure=#2,width=#4}}
\caption[#1]{#3}
\oplabel{#1}
\end{figure}}
\newcommand{\comm}[1]{}
\newtheorem{theorem}{Theorem}[section]
\newtheorem{lemma}[theorem]{Lemma}
\newtheorem{corollary}[theorem]{Corollary}
\newtheoremstyle{named}{}{}{\itshape}{}{\bfseries}{.}{.5em}{\thmnote{#3 }#1}
\theoremstyle{named}
\newtheorem*{namedtheorem}{Theorem}
\theoremstyle{definition}
\newtheorem{definition}[theorem]{Definition}
\newtheorem{example}[theorem]{Example}
\newcommand{\Real}{\operatorname{Re}}
\newcommand{\rad}{\operatorname{rad}}
\newcommand{\bd}{\partial}
\newcommand{\es}{\emptyset}
\newcommand{\ov}{\overline}
\newcommand{\sm}{\smallsetminus}
\newcommand{\ve}{\varepsilon}
\newcommand{\vs}{\vspace{2mm}}
\newcommand{\CC}{{\mathbb C}}
\newcommand{\RR}{{\mathbb R}}
\newcommand{\TT}{{\mathbb T}}
\newcommand{\ZZ}{{\mathbb Z}}
\newcommand{\DD}{{\mathbb D}}
\newcommand{\Chat}{\hat{\mathbb C}}
\newcommand{\AT}{\tilde{A}}
\newcommand{\IT}{\tilde{I}}
\newcommand{\UT}{\tilde{U}}
\newcommand{\XT}{\tilde{X}}
\newcommand{\gT}{\tilde{g}}
\newcommand{\sT}{\tilde{s}}
\newcommand{\aT}{\tilde{a}}
\newcommand{\bT}{\tilde{b}}
\newcommand{\cT}{\tilde{c}}
\newcommand{\pT}{\tilde{p}}
\newcommand{\VT}{\tilde{V}}
\newcommand{\rt}{R_{\theta}}
\newcommand{\rot}{\rho_{\theta}}
\newcommand{\rott}{\rho_{\theta,t}}
\newcommand{\ET}{\tilde{\mathscr E}}
\newcommand{\be}{{\bf e}}
\newcommand{\bfit}{\it \bfseries}
\newcommand{\thmref}[1]{Theorem~\ref{#1}}
\newcommand{\lemref}[1]{Lemma~\ref{#1}}
\newcommand{\defref}[1]{Definition~\ref{#1}}
\newcommand{\corref}[1]{Corollary~\ref{#1}}
\newcommand{\figref}[1]{Fig.~\ref{#1}}
\newcommand{\exref}[1]{Example~\ref{#1}}
\begin{document}

\title[Uniformization of holomorphically moving disks]{Conformal fitness and uniformization of holomorphically moving disks}
\author[S. Zakeri]{Saeed Zakeri}
\address{S. Zakeri, Department of Mathematics, Queens College and Graduate Center of CUNY, New York}
\email{saeed.zakeri@qc.cuny.edu}

\subjclass[2010]{37Fxx, 30C85}

\date{February 11, 2013}

\begin{abstract}
Let $\{ U_t \}_{t \in \DD}$ be a family of topological disks on the Riemann sphere containing the origin $0$ whose boundaries undergo a holomorphic motion over the unit disk $\DD$. We study the question of when there exists a family of Riemann maps $g_t:(\DD,0) \to (U_t,0)$ which depends holomorphically on the parameter $t$. We give five equivalent conditions which provide analytic, dynamical and measure-theoretic characterizations for the existence of the family $\{ g_t \}_{t \in \DD}$, and explore the consequences.  
\end{abstract}

\maketitle

\section{Introduction} \label{sec:intro}

This paper will address a problem in 2-dimensional conformal geometry which has its origin in
holomorphic dynamics. It is motivated by the following observation of Dennis Sullivan in \cite{Su}: Let $\{ f_t \}$ be a family of rational maps of the Riemann sphere depending holomorphically on the complex parameter $t$ in the unit disk $\DD$. Suppose
each $f_t$ has a Siegel disk $U_t$ centered at $0$ whose topological boundary
$\bd U_t$ undergoes a holomorphic motion over $\DD$. Then there is a family of Riemann maps $g_t: (\DD,0) \to (U_t,0)$ which depends
holomorphically on $t$. In particular, if $\rad(U_t,0) = |g'_t(0)|$ denotes the conformal radius of the pointed disk $(U_t,0)$, then $t \mapsto \log \rad (U_t,0)$ is harmonic in $\DD$. Sullivan's result has been used in several recent
works on Siegel disks (see e.g. \cite{BC}, \cite{BP}, \cite{Z2} and \cite{Z3}).
The nature of the function $t \mapsto \log \rad (U_t,0)$ under various assumptions on
the family $\{ f_t \}$ of holomorphic maps has been studied in \cite{BC}. \vs

By a {\bfit disk} we mean a simply connected domain in the Riemann sphere
$\Chat$ whose complement has more than one point. Let $\{ (U_t,c_t) \}_{t \in \DD}$
be a family of pointed disks, where the marked center $c_t \in U_t$ depends holomorphically on $t$. Suppose the boundaries of these disks undergo a holomorphic motion $\varphi_t: \bd U_0 \to \bd U_t$ over $\DD$. This means $\{ \varphi_t \}_{t \in \DD}$ is a family of injections depending holomorphically on $t$, with $\varphi_0=\text{id}$. We are interested in the question of whether there exists a family $g_t : (\DD,0) \to (U_t,c_t)$ of Riemann maps which depends holomorphically on $t$. It is not hard to see that such a family does not always exist. For example, consider the real affine map $\varphi_t: z \mapsto z + t \, \bar{z}$ for
$|t|<1$, and set $U_0=\DD$ and $U_t=\varphi_t(U_0)$. Evidently $\varphi_t : \bd U_0
\to \bd U_t$ defines a holomorphic motion over $\DD$. By the Koebe $1/4$-theorem,
the conformal radius $\rad (U_t,0)$ of the pointed ellipse $(U_t,0)$ is at most $4$
times its inner radius of $1-|t|$, hence the average value of $\log \rad (U_t,0)$ over the circle $|t|=1-\ve$ tends to $-\infty$ as $\ve \to 0$. On the other hand, if there were a family of Riemann maps $g_t: (\DD,0) \to (U_t,0)$ depending
holomorphically on $t$, the function $t \mapsto \log \rad (U_t,0) = \log |g_t'(0)|$
would be harmonic in $\DD$, taking the value $0$ at $t=0$. Hence the average
value of $\log \rad (U_t,0)$ over the circle $|t|=1-\ve$ would remain $0$ irrespective of what $\ve$ is. Note that in this example the boundary of the complementary disk $V_t=\Chat \sm \ov{U}_t$ undergoes the holomorphic motion $(z,t) \mapsto z+t/z$, which already provides a holomorphically varying family of Riemann maps $(\Chat \sm \ov{\DD} , \infty) \to (V_t, \infty)$. Hence $\log \rad(V_t,\infty)$ must be a harmonic function of $t$ (it is actually the constant function $0$). \vs

To understand the obstructions involved, it will be useful to first consider the problem in the ``static'' case where we merely have a pair of pointed disks $(U,c)$ and $(\UT,\cT)$ in $\Chat$ and a homeomorphism $\varphi: \bd U \to \bd \UT$ which is compatible with the embedding in the sphere. This sets the stage for the ``dynamic'' case where the disk boundaries move holomorphically, but we believe it also deserves to be investigated on its own merits. Our formulation and solution of the problem turns out to depend on the following key notion: We say that $\varphi$ is {\bfit conformally fit} if there are Riemann maps $g: (\DD,0) \to (U,c)$ and $\gT: (\DD,0) \to (\UT,\cT)$ for which the relation
$$
\gT = \varphi \circ g
$$
holds almost everywhere on the unit circle $\TT = \{ z \in \CC : |z|=1 \}$ (\defref{CA}). This is
equivalent to the relation $[\gT] = [\varphi] \circ [g]$ between the induced homeomorphisms defined on the corresponding spaces of prime ends (\lemref{EQ}). It is easy to show that $\varphi: \bd U \to \bd \UT$ is conformally fit if and only if it is the radial limit map of a biholomorphism $(U,c) \to (\UT,\cT)$ (\thmref{adan}). \vs

Conformal fitness can also be characterized in dynamical terms using
rotations. For each $\theta \in \RR/\ZZ$, let $\rt$ denote the rigid rotation $z \mapsto e^{2 \pi i \theta} z$ on $\DD$. Given a disk $U \subset \Chat$ centered at $c$ and a Riemann map $g: (\DD,0) \to (U,c)$, the map $\rot = g \circ \rt \circ g^{-1} : (U,c) \to (U,c)$ is called the {\bfit intrinsic rotation} of $U$ by the angle $\theta$ about the center $c$. The word ``intrinsic'' refers to the fact that $\rot$ depends only on the pointed disk $(U,c)$ and not on the choice of the conformal isomorphism $g$: By the Schwarz lemma, $g$ is unique up to pre-composition with rigid rotations, which trivially commute with $\rt$. In \thmref{adyn} we show that $\varphi: \bd U \to \bd \UT$ is conformally fit if and only if it is the radial limit map of a homeomorphism $\Phi: (U,c) \to (\UT,\cT)$ such that $\Phi \circ \rot = \tilde{\rho}_{\theta} \circ \Phi$ for some (equivalently, every) irrational $\theta$. \vs

In $\S$\ref{sec:meas} we find another characterization of conformal fitness
based on a refinement of the notion of harmonic measure. According to Fatou, every Riemann map $g: (\DD,0) \to (U,c)$ has the radial limit $g(a) = \lim_{r \to 1} g(ra) \in \Chat$ for almost every choice of $a \in \TT$ with respect to Lebesgue measure.
We use the notation $g^{-1}(X)= \{ a \in \TT : g(a) \ \text{exists and is in} \ X \}$
whenever $X$ is a subset of $\bd U$. The almost everywhere defined radial limit
map $g: \TT \to \bd U$ pushes the normalized Lebesgue measure $m$ on $\TT$ forward
to the {\bfit harmonic measure} $\omega = g_{\ast} m$ as seen from $c$,
which is a Borel probability measure supported on $\bd U$. Explicitly, $\omega(X)=m(g^{-1}(X))$ for every Borel set $X \subset \bd U$. It is not hard to show that when $U,\UT$ are Jordan domains with marked centers $c, \cT$, a homeomorphism $\varphi: \bd U \to
\bd \UT$ is conformally fit if and only if $\varphi_{\ast} \omega =
\tilde{\omega}$ (\thmref{HM1}). For arbitrary disks, however, respecting the harmonic measure does not guarantee conformal fitness (\exref{dom}). To get around this problem, we refine the notion of harmonic measure to account for different ways of accessing a boundary point from within the disk. Up to a set of harmonic measure zero, the boundary of a disk $U$ decomposes into the disjoint union
\begin{equation}\label{yek}
\bd U = \bd U^1 \cup \bd U^2,
\end{equation}
where $\bd U^1$ is the set of {\bfit uniaccessible} points at which exactly one {\bfit ray} (= hyperbolic geodesic starting at $c$) lands, while $\bd U^2$ is the set of {\bfit biaccessible} points where precisely two rays land. Once a boundary point $s \in \bd U^1$ is marked, there is a simple way of distinguishing the two rays that land at each biaccessible point, which we designate by $-$ and $+$ (see $\S$\ref{sec:meas} for details). Taking the preimages of each side of \eqref{yek} under the unique Riemann map $g: (\DD,0) \to (U,c)$ with $g(1)=s$, we obtain a
corresponding measurable decomposition
$$
\TT = A^1 \cup A^{2-} \cup A^{2+}
$$
up to a set of Lebesgue measure zero. This gives a decomposition
\begin{equation}\label{omdec}
\omega = \alpha + \beta^- + \beta^+
\end{equation}
of the harmonic measure, where
$$
\alpha(X) = m(g^{-1}(X) \cap A^1) \qquad \text{and} \qquad
\beta^{\pm}(X) = m(g^{-1}(X) \cap A^{2\pm})
$$
for every Borel set $X \subset \bd U$. Note that $\alpha$ is intrinsic, but the measures $\beta^{\pm}$ depend on the choice of the marked boundary point $s$. This
allows a measure-theoretic characterization of conformal fitness: A homeomorphism $\varphi: \bd U \to
\bd \UT$ with $\varphi(s)=\tilde{s}$ is conformally fit if and only if $\varphi_{\ast} \alpha =
\tilde{\alpha}$ and $\varphi_{\ast} \beta^{\pm} = \tilde{\beta}^{\pm}$
(\thmref{mtadapt}). \vs

Back to our original question, suppose now that $\{ (U_t, c_t) \}_{t \in \DD}$ is a
family of pointed disks in $\Chat$, where the marked center $c_t$ depends holomorphically on $t$. Assume that the boundaries of these disks undergo a holomorphic motion $\varphi_t: \bd U_0 \to \bd U_t$ over $\DD$. In $\S$\ref{sec:hm} we show that the existence of a holomorphically varying family
of Riemann maps $g_t:(\DD,0) \to (U_t,c_t)$ is equivalent to conformal
fitness of the map $\varphi_t: \bd U_0 \to \bd U_t$ for each parameter $t$.
The above results then immediately translate into analytic, dynamical and
measure-theoretic characterizations in the holomorphically moving case.
But, as it is well known, the existence of a holomorphic motion regulates the disk boundaries in a rather strong form; for example, it implies that the boundaries  are all quasiconformally equivalent. So it should come as no surprise that there are alternative characterizations of conformal fitness in the holomorphically moving case that are sharper than the static case. The following is proved in $\S$\ref{sec:hm}: \vs

\begin{namedtheorem}[Main]
Let $\{ (U_t, c_t) \}_{t \in \DD}$ be a family of pointed disks in $\Chat$, where the center $c_t$ depends holomorphically on $t$. Suppose the boundaries of these disks undergo a holomorphic motion $\varphi_t: \bd U_0 \to \bd U_t$ over $\DD$. Then the following conditions are equivalent: \vs
\begin{enumerate}
\item[(i)]
There is a family of Riemann maps $g_t : (\DD,0) \to (U_t,c_t)$ which depends
holomorphically on $t$. \vs
\item[(ii)]
$\varphi_t: \bd U_0 \to \bd U_t$ is a ``trivial motion'' in the sense that it extends to a holomorphic motion $\Phi_t: (\Chat, c_0) \to (\Chat,c_t)$ such that the restriction $\Phi_t: (U_0,c_0) \to (U_t,c_t)$ is a biholomorphism for each $t$. \vs
\item[(iii)]
$\varphi_t : \bd U_0 \to \bd U_t$ is conformally fit for each $t$. \vs
\item[(iv)]
There is an irrational $\theta \in \RR/\ZZ$ for which the intrinsic rotation $\rott: (U_t,c_t) \to
(U_t,c_t)$ depends holomorphically on $t$. \vs
\item[(v)]
$\varphi_t: \bd U_0 \to \bd U_t$ respects the harmonic measure as seen from the center: $(\varphi_t)_{\ast} \omega_0 = \omega_t$ for each $t$. \vs
\item[(vi)]
The map $t \mapsto \log \rad(U_t,c_t)$ is harmonic in $\DD$. 
\end{enumerate}
\end{namedtheorem}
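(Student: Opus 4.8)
The plan is to run the cyclic implications
\begin{gather*}
\text{(ii)}\Rightarrow\text{(i)}\Rightarrow\text{(iii)}\Rightarrow\text{(ii)},\qquad
\text{(i)}\Leftrightarrow\text{(iv)},\\
\text{(iii)}\Rightarrow\text{(v)}\Rightarrow\text{(iii)},\qquad
\text{(i)}\Rightarrow\text{(vi)}\Rightarrow\text{(iii)},
\end{gather*}
reading off the ``static'' content from Theorems~\ref{adan}, \ref{adyn} and \ref{mtadapt}. The inputs that are genuinely new here, and all tied to the presence of the holomorphic motion, are S\l odkowski's extension theorem for holomorphic motions, the Ahlfors--Bers theorem with holomorphic parameter dependence, and the trivial but decisive observation that a holomorphic map from $\DD$ into a set with empty interior (such as $\TT$) must be constant.

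\textit{The strong block.} First, $\text{(ii)}\Rightarrow\text{(i)}$ is immediate: fix any Riemann map $g_0:(\DD,0)\to(U_0,c_0)$ and put $g_t:=\Phi_t\circ g_0$. For $\text{(i)}\Rightarrow\text{(iii)}$ I would extend $\varphi_t$ by S\l odkowski to a holomorphic motion $\hat\varphi_t:\Chat\to\Chat$; since $c_t$ stays in $U_t$ while $\hat\varphi_t$ is a homeomorphism permuting the two complementary components, one has $\hat\varphi_t(U_0)=U_t$. Then $\kappa_t:=g_t^{-1}\circ\hat\varphi_t\circ g_0$ is a holomorphic motion of $\DD$ \emph{onto} $\DD$ --- this is where I use that $g_t$, hence $g_t^{-1}$, depends holomorphically on $t$ --- so $\kappa_t$ is quasiconformal and its extension to $\Chat$ carries $\TT$ onto $\TT$; but on $\TT$ that extension is, point by point, a holomorphic function of $t$ with values in $\TT$, hence the identity. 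Passing to prime-end compactifications in the identity $g_t\circ\kappa_t=\hat\varphi_t\circ g_0$ then yields $[g_t]=[\varphi_t]\circ[g_0]$, which is conformal fitness of $\varphi_t$ by \lemref{EQ}. For $\text{(iii)}\Rightarrow\text{(i)}$ I would note that, for each $t$, the biholomorphism $\Psi_t:(U_0,c_0)\to(U_t,c_t)$ whose radial limits equal $\varphi_t$ is \emph{unique}, and then reconstruct $g_t:=\Psi_t\circ g_0$ from the Poisson representation $g_t(z)=\int_\TT P(z,a)\,\varphi_t(g_0(a))\,dm(a)$; since $t\mapsto\varphi_t(g_0(a))$ is holomorphic for a.e.\ $a$, Morera's theorem applied under the integral gives holomorphy of $t\mapsto g_t(z)$ (after a harmless, locally-in-$t$ M\"obius normalization that keeps $\ov{U_t}$ inside a fixed compact set). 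The loop is closed by $\text{(iii)}\Rightarrow\text{(ii)}$: one performs a quasiconformal surgery, gluing $\Psi_t$ on $U_0$ to a S\l odkowski extension on $V_0=\Chat\sm\ov{U}_0$ at the level of Beltrami coefficients and appealing to Ahlfors--Bers with holomorphic parameters to produce the holomorphic motion $\Phi_t$ of $\Chat$. Finally, in $\text{(i)}\Leftrightarrow\text{(iv)}$ the forward direction is trivial, as $\rott=g_t\circ\rt\circ g_t^{-1}$ is visibly holomorphic in $t$; conversely, if $\rott$ is a holomorphic family of conformal automorphisms of $U_t$ with the persistent Siegel fixed point $c_t$ of \emph{constant} multiplier $e^{2\pi i\theta}$, then its normalized linearizer is unique and, by a normal-families argument, depends holomorphically on $t$, and conjugating it back to $\rt$ exhibits a holomorphic family of Riemann maps.

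\textit{Attaching (v) and (vi).} The implication $\text{(iii)}\Rightarrow\text{(v)}$ is a one-liner from the definition of conformal fitness: $g_t=\varphi_t\circ g_0$ a.e.\ on $\TT$ forces $\omega_t=(g_t)_\ast m=(\varphi_t)_\ast\omega_0$. Likewise $\text{(i)}\Rightarrow\text{(vi)}$: $\log\rad(U_t,c_t)=\log|g_t'(0)|=\Real\log g_t'(0)$ is harmonic because $g_t'(0)$ is holomorphic and nonvanishing. What remains --- $\text{(v)}\Rightarrow\text{(iii)}$ and $\text{(vi)}\Rightarrow\text{(iii)}$ --- is where the holomorphic motion must be used in an essential, non-measure-theoretic way. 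For $\text{(v)}\Rightarrow\text{(iii)}$ I would again pass to $\hat\varphi_t$: as an orientation-preserving homeomorphism of $\Chat$ carrying $U_0$ onto $U_t$, it induces a homeomorphism of the prime-end compactifications, hence it respects the decomposition \eqref{yek} into uni- and biaccessible points and, once the marked points are matched by $\tilde s=\varphi_t(s)$, respects the $-/+$ labelling of the two rays at a biaccessible point; feeding \eqref{omdec} into $(\varphi_t)_\ast\omega_0=\omega_t$ should then force $(\varphi_t)_\ast\alpha_0=\alpha_t$ and $(\varphi_t)_\ast\beta_0^{\pm}=\beta_t^{\pm}$, after which \thmref{mtadapt} closes the argument. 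For $\text{(vi)}\Rightarrow\text{(iii)}$ I would first show --- once more via the S\l odkowski extension and the Beltrami calculus --- that $t\mapsto\log\rad(U_t,c_t)$ is \emph{a priori} superharmonic on $\DD$, with a nonnegative Riesz mass that vanishes precisely when $\varphi_t$ is conformally fit; harmonicity then forces that mass to be zero.

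\textit{Where the difficulty lies.} Two points demand genuine care. The first is the surgery in $\text{(iii)}\Rightarrow\text{(ii)}$ together with the linearizer step in $\text{(iv)}\Rightarrow\text{(i)}$: both require that a construction producing, for each fixed $t$, a canonical conformal object --- the conformal part of the surgered map; the normalized Siegel linearizer --- actually varies holomorphically with $t$, which is exactly where uniqueness of the object, the holomorphic-parameter form of Ahlfors--Bers, and Montel's theorem become indispensable, and where a naive ``glue the two sides along $\bd U_0$'' argument fails because $\bd U_0$ may be far from locally connected. The second, and to my mind the conceptual heart of the theorem, is the equality assertion inside $\text{(vi)}\Rightarrow\text{(iii)}$: one must identify the Riesz mass of $t\mapsto\log\rad(U_t,c_t)$ with the precise obstruction to conformal fitness, so that its vanishing promotes the mere harmonicity of the conformal radius to a holomorphically moving uniformization. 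This has no counterpart in the static theory; it is also the reason the finer decomposition $\omega=\alpha+\beta^-+\beta^+$ is needed rather than merely convenient --- the plain equality $(\varphi_t)_\ast\omega_0=\omega_t$ does not by itself separate $\beta^-$ from $\beta^+$, and it is precisely the holomorphic motion, via its quasiconformal extension, that restores this distinction.
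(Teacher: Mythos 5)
Your handling of the easy implications is fine, and your direct proof of (i)$\Rightarrow$(iii) (the extended motion $\kappa_t=g_t^{-1}\circ\hat\varphi_t\circ g_0$ restricted to $\TT$ is, for fixed $a$, a holomorphic map of $t$ with values in $\TT$, hence constant) is a correct and pleasant shortcut. But the heart of the theorem is missing. In (v)$\Rightarrow$(iii), the hypothesis $(\varphi_t)_\ast\omega_0=\omega_t$ together with the fact that the induced prime-end map preserves uni/biaccessibility and the $\pm$ labels (\lemref{pe=e1}) only yields $(\varphi_t)_\ast\alpha_0=\alpha_t$ and $(\varphi_t)_\ast(\beta_0^-+\beta_0^+)=\beta_t^-+\beta_t^+$; nothing in your argument separates $\beta^-$ from $\beta^+$. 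Preserving the labelling is a statement about points, not about mass, and \exref{dom} is exactly a homeomorphism that respects $\omega$ while shifting mass from one side of a biaccessible set to the other. You acknowledge that ``the quasiconformal extension restores this distinction,'' but you give no mechanism, and that mechanism is the whole difficulty. The paper's device is the Zhukovski\u{i} double cover of \S\ref{sec:zdc}: mark a second holomorphically moving accessible point $q_t=\varphi_t(g_0(a_0))$, rescale so $s_t,q_t$ sit at $\pm 2$, and lift the motion to the branched double cover (\lemref{zlift}); by \corref{joojoo} the offending ray pairs become non-separating upstairs, so on the cover the plain harmonic measure pins down $a_t=a_0$; and, crucially, the cover changes $\log\rad$ only by the harmonic term $\log|s_t-q_t|-\log 4$, so the already established equivalence (v)$\Leftrightarrow$(vi) can be re-applied upstairs to know that the lifted motion respects harmonic measure (note that respecting $\omega$ does \emph{not} lift automatically, as the paper points out at the end of \S\ref{sec:zdc}). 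Likewise your (vi)$\Rightarrow$(iii) rests on the unproved identification of a ``Riesz mass'' with the obstruction to fitness; what (vi) honestly gives is only (v), by comparing the harmonic function $t\mapsto E((\varphi_t)_\ast\omega_0)$ with $E(\omega_t)=\log\rad(U_t,\infty)$ and invoking uniqueness of the equilibrium measure, after which one is back at the missing step above.

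There are also gaps in your strong block. The surgery for (iii)$\Rightarrow$(ii) ``at the level of Beltrami coefficients'' does not produce an extension of $\varphi_t$: the Ahlfors--Bers solution of the coefficient obtained by zeroing the dilatation on $U_0$ agrees with the Slodkowski extension only up to post-composition with a map conformal off $\ov{U}_t$, so its restriction to $\bd U_0$ need not be $\varphi_t$. Meanwhile the ``naive gluing'' you dismiss is exactly what works: a holomorphic motion need not be continuous in $z$, so one may simply set $\Phi_t=g_t\circ g_0^{-1}$ on $U_0$ and $\Phi_t=\psi_t$ off $U_0$ as in \eqref{P}; injectivity is clear and quasiconformality comes for free. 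In (iii)$\Rightarrow$(i), a M\"obius change of coordinates cannot in general keep $\ov{U}_t$ in a fixed compact set (the complement of $U_t$ may have empty interior, e.g.\ an arc); the paper instead sends three moving boundary points to $0,1,\infty$ and lifts the Riemann maps under the universal covering of the thrice-punctured sphere, so the Poisson representation is applied to bounded, $\DD$-valued functions. Finally, (iv)$\Rightarrow$(i) is only asserted; the actual work --- invariant curves through points tending to the boundary, the $\lambda$-lemma on closures of irrational orbits, and a Vitali--Porter argument to make the limiting Riemann maps holomorphic in $t$ --- is Sullivan's argument and is not a routine normal-families remark.
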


The equivalence of the first four conditions depends on elementary properties of holomorphic motions, with (iv) $\Longrightarrow$ (i) essentially being Sullivan's argument. But the fact that the last two are also equivalent to them is rather curious since initially (v) and (vi) appear to be weaker conditions. Observe that the decomposition \eqref{omdec}, which was essential in our measure-theoretic characterization of fitness in the static case, plays no role in the holomorphically moving case. The equivalence (v) $\Longleftrightarrow$ (vi) follows from the potential-theoretic characterization of harmonic measure as the equilibrium measure minimizing the energy integral, while the implication (v) $\Longrightarrow$ (i) follows from this by taking suitable double-covers of disks under which some biaccessible points are rendered uniaccessible, a construction which is explained in \S \ref{sec:zdc}. \vs

Here are two corollaries that may be of independent interest:

\begin{corollary}
The only holomorphic motions of a pointed disk which keep the conformal radius fixed are the trivial motions.
\end{corollary}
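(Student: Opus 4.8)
The plan is to read this off from the Main Theorem: it is essentially the degenerate case of the implication (vi) $\Longrightarrow$ (ii). First I would fix the interpretation. A holomorphic motion of the pointed disk $(U_0,c_0)$ provides exactly the data of the Main Theorem — a holomorphic motion $\varphi_t : \bd U_0 \to \bd U_t$ of the boundary over $\DD$, together with the holomorphically varying centers $c_t \in U_t$. The hypothesis that the motion \emph{keeps the conformal radius fixed} means precisely that $\rad(U_t,c_t) = \rad(U_0,c_0)$ for every $t \in \DD$.

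The key point is then immediate: a constant function on $\DD$ is harmonic, so condition (vi) of the Main Theorem is satisfied. By the implication (vi) $\Longrightarrow$ (ii), the boundary motion $\varphi_t$ extends to a holomorphic motion $\Phi_t : (\Chat,c_0) \to (\Chat,c_t)$ whose restriction $(U_0,c_0) \to (U_t,c_t)$ is a biholomorphism for each $t$; that is, the motion is trivial. There is no real obstacle at this stage — all of the substance is carried by the equivalences proved in \secref{sec:hm}, and in particular by the chain that leads from (vi) to (ii).

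For completeness I would also point out that the converse fails, so that the content of the corollary is precisely the forward implication just established. Indeed, if $\varphi_t$ is a trivial motion with extension $\Phi_t$ as in (ii) and (say) $c_0 \neq \infty$, then $\rad(U_t,c_t) = |\Phi_t'(c_0)| \, \rad(U_0,c_0)$, where $t \mapsto \Phi_t'(c_0)$ is holomorphic with value $1$ at $t=0$; the dilations $\Phi_t : z \mapsto (1+t)z$, carrying $U_0 = \DD$ to $U_t = (1+t)\DD$ with $\rad(U_t,0) = |1+t|$, already show that a trivial motion need not keep the conformal radius fixed. (So a trivial motion does so if and only if $|\Phi_t'(c_0)| \equiv 1$, equivalently $\Phi_t'(c_0) \equiv 1$, since a holomorphic function of constant modulus is constant.)
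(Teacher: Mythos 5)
Your proof is correct and is essentially the paper's own argument: constancy of the conformal radius makes $t \mapsto \log \rad(U_t,c_t)$ harmonic, so condition (vi) of the Main Theorem holds and the implication (vi) $\Longrightarrow$ (ii) yields triviality of the motion. Your closing remark that the converse fails is accurate but tangential; the paper instead observes that when $\log \rad(U_t,c_t)$ is harmonic a rescaling makes the radius constant, so the corollary is in fact equivalent to (vi) $\Longrightarrow$ (ii).
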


This follows from the implication (vi) $\Longrightarrow$ (ii) in the Main Theorem. Note that when $t \mapsto \log \rad(U_t,c_t)$ is harmonic, there is a suitable rescaling of the motion which makes the conformal radius constant in $t$ (see \S \ref{sec:hm}). This shows that the above corollary is in fact equivalent to (vi) $\Longrightarrow$ (ii).

\begin{corollary}
Take a family $\{ U_t \}_{t \in \DD}$ of disks in $\Chat$ whose boundaries $\{ \bd U_t \}_{t \in \DD}$ undergo a holomorphic motion over $\DD$. Let $h_t:\DD \to U_t$ be any Riemann map with the power series expansion
$$
h_t(z)= a_0(t) + a_1(t) \, z + a_2(t) \, z^2 + \cdots
$$
If the map $t \mapsto a_n(t)$ is holomorphic for $n=0,1$, then it is holomorphic for all $n$.
\end{corollary}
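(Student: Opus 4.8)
The plan is to deduce the corollary from the implication (vi) $\Longrightarrow$ (i) of the Main Theorem. First I would set $c_t := a_0(t) = h_t(0)$; since $a_0$ is holomorphic, this is a holomorphically varying center in $U_t$, so the pointed family $\{(U_t,c_t)\}_{t\in\DD}$ together with the given holomorphic motion $\varphi_t:\bd U_0\to\bd U_t$ meets the hypotheses of the Main Theorem. Because $h_t$ is conformal we have $a_1(t)=h_t'(0)\neq 0$, and since $h_t$ is a Riemann map $(\DD,0)\to(U_t,c_t)$, the defining formula for the conformal radius gives $|a_1(t)|=\rad(U_t,c_t)$. The hypothesis that $a_1$ is holomorphic --- hence a zero-free holomorphic function $\DD\to\CC$ --- therefore says precisely that $\log\rad(U_t,c_t)=\log|a_1(t)|$ is harmonic in $\DD$, which is condition (vi). The Main Theorem then yields condition (i): a family of Riemann maps $g_t:(\DD,0)\to(U_t,c_t)$ depending holomorphically on $t$.

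Next I would compare $h_t$ with this $g_t$. Both are Riemann maps from $(\DD,0)$ onto $(U_t,c_t)$, so by the Schwarz lemma $g_t^{-1}\circ h_t$ is a rotation of $\DD$; writing $h_t(z)=g_t(\lambda(t)\,z)$ with $|\lambda(t)|=1$ and differentiating at $0$ gives $\lambda(t)=a_1(t)/g_t'(0)$. Now $a_1$ is holomorphic and zero-free, and $g_t'(0)$ is holomorphic and zero-free (since $\{g_t\}$ is a holomorphic family of conformal maps), so $\lambda$ extends to a holomorphic function $\DD\to\CC$ of constant modulus $1$; by the maximum principle it is a unimodular constant $\lambda$. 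Hence $h_t(z)=g_t(\lambda z)$ identically in $z$ and $t$.

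Finally, writing $g_t(z)=\sum_{n\ge 0}b_n(t)\,z^n$, we get $a_n(t)=\lambda^n b_n(t)$, so it remains only to check that each $b_n$ is holomorphic in $t$. This is the one routine technical step: since $(z,t)\mapsto g_t(z)$ is continuous with $g_{t_0}(0)=c_{t_0}\in\CC$, for $t$ in a small neighborhood of any $t_0$ and $|z|\le r$ with $r$ small and fixed the values $g_t(z)$ stay in a bounded subset of $\CC$, so Cauchy's formula $b_n(t)=\frac{1}{2\pi i}\int_{|z|=r}g_t(z)\,z^{-n-1}\,dz$ combined with Morera's theorem (differentiating under the integral sign) shows $b_n$ is holomorphic near $t_0$, hence on all of $\DD$. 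Therefore every $a_n$ is holomorphic, as claimed.

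I do not expect a genuine obstacle: essentially all the content lies in recognizing that the hypothesis ``$a_1$ is holomorphic'' is exactly condition (vi), after which (vi) $\Longrightarrow$ (i) supplies a good family $\{g_t\}$ and the rest is the standard Schwarz-lemma rigidity plus a Cauchy-integral argument. The only point to watch is that the family $\{g_t\}$ produced by the theorem and the given family $\{h_t\}$ be centered at the same point, which is arranged simply by taking $c_t=a_0(t)$.
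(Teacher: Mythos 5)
Your proposal is correct and follows essentially the same route as the paper: identify $c_t=a_0(t)$ and $\log\rad(U_t,c_t)=\log|a_1(t)|$, invoke (vi) $\Longrightarrow$ (i) of the Main Theorem to get a holomorphic family $g_t$, and use the Schwarz lemma to write $h_t(z)=g_t(\lambda_t z)$ with $\lambda_t=a_1(t)/g_t'(0)$ holomorphic of modulus $1$, hence constant. Your closing Cauchy-integral/Morera step just spells out the (routine) fact, left implicit in the paper, that holomorphic dependence of $h_t$ on $t$ forces each Taylor coefficient $a_n(t)$ to be holomorphic.
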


In fact, the center $c_t=a_0(t)$ is holomorphic in $t$ and the logarithm of the conformal radius $\log \rad(U_t,c_t) = \log |a_1(t)|$ is harmonic in $t$, so by the Main Theorem there is a family $g_t:(\DD,0) \to (U_t,c_t)$ of Riemann maps which depends holomorphically on $t$. By the Schwarz lemma, $h_t(z)=g_t(\lambda_t z)$ for some $\lambda_t \in \CC$ with $|\lambda_t|=1$. As $\lambda_t=a_1(t)/g'_t(0)$ is holomorphic in $t$, it must be constant. This proves the holomorphic dependence of $h_t$ and therefore all the coefficients $a_n(t)=h_t^{(n)}(0)/n!$ on $t$. \vs

Let us revisit the dynamical setting in the beginning of this introduction, where $\{ f_t : \Chat \to \Chat \}_{t \in \DD}$ is a holomorphic family of rational maps with fixed Siegel disks $U_t$ of a given rotation number $\theta$ centered at the origin. Suppose the Julia set of $f_t$ moves holomorphically over $\DD$. This motion restricts to a holomorphic motion of $\bd U_t$, and the condition (iv) above holds automatically since the intrinsic rotation $\rott: (U_t,0) \to (U_t,0)$ is just $f_t$. Hence the Main Theorem implies the existence of a holomorphically varying family of Riemann maps $g_t: (\DD,0) \to (U_t,0)$ and $t \mapsto \log \rad(U_t,0)$ is harmonic, which is Sullivan's result in \cite{Su}. It also follows from the condition (v) that any motion of a Siegel disk must respect the harmonic measure on the boundary as seen from the fixed point inside. \vs

The Main Theorem reveals the special nature of Siegel disks, as the analogous statement for other simply connected Fatou components is generally false. For example, the Julia set $J_t$ of the quadratic polynomial $f_t: z \mapsto tz+z^2$ undergoes a holomorphic motion $\varphi_t: J_0=\TT \to J_t$ over the unit disk $|t|<1$. For each $t \in \DD$, the Jordan curve $J_t$ separates the sphere into two simply connected Fatou components $U_t$ and $V_t$, where $U_t$ is the basin of attraction of $0$ and $V_t$ is that of $\infty$ under the iterations of $f_t$. It is well known that $\varphi_t$ seen as a motion $\bd V_0 \to \bd V_t$ is trivial (for example, $t \mapsto \rad(V_t, \infty)$ is identically $1$). If $\varphi_t$ seen as a motion $\bd U_0 \to \bd U_t$ were also trivial, it would follow by patching the two trivial motions (see the proof of \thmref{uniqueness}) that $\varphi_t$ extends to a holomorphic motion $\Phi_t: \Chat \to \Chat$ which maps $U_0$ to $U_t$ and $V_0$ to $V_t$ biholomorphically for each $t$. It would easily follow that each $\Phi_t$ is a M\"{o}bius map and thus each $J_t$ is a round circle, which is a contradiction. We conclude using the Main Theorem that there is no family of Riemann maps $\DD \to U_t$ which depends holomorphically on $t$. \vs

\noindent
{\it Acknowledgements.} This is a revised version of an unpublished manuscript that I wrote in 2010. I'd like to thank Fred Gardiner and Zhe Wang for conversations at an early stage of this project. I'm specially grateful to Xavier Buff whose suggestion during a late night conversation on the Island of Porquerolles in May 2011 prompted the idea of taking Zhukovski\u{i} double-covers that is utilized in the proof of the main theorem.

\section{Preliminaries} \label{sec:prelim}

Throughout the paper we adopt the following notations: \vs
\begin{enumerate}
\item[$\bullet$]
$\Chat = \CC \cup \{ \infty \}$ \\
$\DD = \{ z \in \CC : |z|<1 \}$ \\
$\Delta=\Chat \sm \ov{\DD}$ \\
$\TT = \bd \DD = \bd \Delta = \{ z \in \CC : |z|=1 \}$. \vs
\item[$\bullet$]
$m$ is the normalized Lebesgue measure on the circle $\TT$. \vs
\item[$\bullet$]
For a distinct pair $a,b \in \TT$, $[a,b]$ is the closed arc in $\TT$
starting at $a$ and going counterclockwise to $b$. \vs
\item[$\bullet$]
We will associate various objects with a domain $U$ in $\Chat$. The
corresponding objects associated with another domain $\UT$ will be denoted by similar symbols with a tilde ``$\, \tilde{\ }\,$'' on the top, without further explanation. \vs
\end{enumerate}

We begin with a quick review of a few basic facts on conformal
mappings and Carath\'eodory's theory of prime ends. Details and proofs can be found, for example, in \cite{Mi} or \cite{Po}. \vs

By a {\bfit disk} is meant a simply connected domain $U \subset \Chat$ whose complement has more than one point. In what follows we always mark a center $c \in U$. Let $g: (\DD,0) \to (U,c)$ be a Riemann map, i.e., a conformal isomorphism $\DD \to U$ such that $g(0)=c$. The {\bfit ray} at angle $a \in \TT$ is the embedded arc
$$
\gamma(a) = \gamma_g(a) = \{ g(ra) : 0 < r < 1 \} \subset U.
$$
By the Schwarz lemma, changing $g$ to another Riemann map $(\DD,0) \to (U,c)$ will only rotate the angle of rays. We call $a$ a {\bfit landing angle} for $g$ if the radial limit
$$
g(a) = \lim_{r \to 1} g(ra) \in \bd U
$$
exists. In this case, we say that $\gamma(a)$ {\bfit lands} at $g(a)$. \vs

For each $p \in \bd U$, let $g^{-1}(p) \subset \TT$ denote
the (possibly empty) set of angles of rays in $U$ that land at $p$. The boundary
$\bd U$ decomposes into the disjoint union
\begin{equation}\label{xn}
\bd U = \bd U^0 \cup \bd U^1 \cup \bd U^2 \cup \bd U^{\geq 3},
\end{equation}
where
\begin{align*}
\bd U^n & = \{ p \in \bd U : g^{-1}(p) \ \text{has precisely} \ n \ \text{elements} \}
\qquad (0 \leq n \leq 2), \\
\bd U^{\geq 3} & = \{ p \in \bd U : g^{-1}(p) \ \text{has at least} \ 3 \ \text{elements} \}.
\end{align*}
Observe that the definition of these sets is independent of the choice of the
Riemann map $g$. Points in $\bd U \sm \bd U^0$, $\bd U^1$, and $\bd U^2$ are called
{\bfit accessible}, {\bfit uniaccessible}, and {\bfit biaccessible}, respectively. By a classical theorem of Lindel\"{o}f \cite[Corollary 2.17]{Po}, if there is a path
$\eta: [0,1) \to \DD$ with $\lim_{t \to 1} \eta(t)=a \in \TT$ and $\lim_{t \to 1}
g(\eta(t))=p \in \bd U$, then $g(a)=p$. In other words, $p \in \bd U$ is accessible
through a ray if and only if it is accessible through an arbitrary path in $U$. One immediate corollary is that the set of accessible points is always dense in $\bd U$. \vs

According to Fatou, almost every point on $\TT$ with respect to Lebesgue measure
is a landing angle. This statement can be made much sharper as follows. First, a simple topological argument proves that $\bd U^{\geq 3}$ is at most countable. Second, a theorem of F. and M. Riesz shows that $g^{-1}(p)$ has measure zero for every $p \in \bd U$ \cite[Theorem 17.4]{Mi}. Putting these facts together, we conclude that the preimage $g^{-1}(\bd U^1 \cup \bd U^2)$ already has full measure in $\TT$. In other words, almost every ray lands at a uniaccessible or biaccessible point (compare \lemref{baba} below). \vs

Another basic property that we will use is that if $p$ and $q$ are
distinct points on $\bd U$, then $g^{-1}(p)$ and $g^{-1}(q)$ are {\bfit unlinked}
subsets of $\TT$ in the sense that $g^{-1}(p)$ is contained in a single connected component of
$\TT \sm g^{-1}(q)$. This follows from the observation that distinct rays are
disjoint, hence the rays landing at $p$ cannot be separated by the rays landing at
$q$. \vs

A {\bfit cross-cut} $\eta \subset U$ is a set homeomorphic to the open interval
$(0,1) \subset \RR$ whose closure $\ov{\eta}$ is homeomorphic to the closed
interval $[0,1]$, with both endpoints on $\bd U$. The complement $U \sm \eta$
has two connected components, either of which is called a {\bfit cross-cut
domain} in $U$. A sequence $\{ O_n \}$ of cross-cut domains is a {\bfit
fundamental chain} if (i) $O_{n+1} \subset O_n$ for all $n$; (ii) the
cross-cuts $\eta_n= \bd O_n \cap U$ are pairwise disjoint; (iii) the spherical
diameter of $\eta_n$ tends to zero as $n \to \infty$. Two fundamental chains $\{
O_n \}$, $\{ O_m' \}$ are {\bfit equivalent} if each $O_n$ contains some $O'_m$
and each $O'_m$ contains some $O_n$. An equivalence class of fundamental chains
is called a {\bfit prime end} of $U$. The space of all prime ends of $U$ is
denoted by $\mathscr E$. \vs

According to Carath\'eodory, every Riemann map $g: (\DD,0) \to (U,c)$
defines a bijection $[g] : \TT \to \mathscr E$ which induces a topology and orientation on $\mathscr E$ with respect to which $[g]$ is an orientation-preserving homeomorphism. If $\{ O_n \}$ is any fundamental
chain in $U$ representing a prime end $\be \in {\mathscr E}$, then $a=[g]^{-1}(\be)$
is characterized as the unique point on the circle for which the ray $\gamma(a)$
meets every $O_n$. For $a \in \TT$, consider the {\bfit accumulation set}
$$
\Lambda(a) = \ov{\gamma(a)} \sm \gamma(a) = \bigcap_{\ve>0} \ov{ \{ g(ra): 1-\ve<r<1 \} }
$$
which is a non-empty, compact and connected subset of $\bd U$. Evidently
$\Lambda(a)$ reduces to a singleton $\{ p \}$ if and only if $\gamma(a)$ lands at
$p$. For a prime end $\be \in {\mathscr E}$, the {\bfit principal set} $\Pi(\be)$ is
the set of all $p \in \bd U$ for which there is a fundamental chain $\{ O_n \}$
representing $\be$ so that the cross-cuts $\eta_n = \bd O_n \cap U$ converge to $p$
in the spherical metric. The following basic relation between accumulation sets and principal sets holds:
\begin{equation}\label{PA1}
\Pi ([g](a)) = \Lambda(a) \qquad \text{for all} \ a \in \TT.
\end{equation}
It is well known that $g$ has a continuous extension $\ov{\DD} \to \ov{U}$ if and
only if the topological boundary $\bd U$ is locally connected. In this case, $\bd U$
is the quotient of the prime end space under the projection $\Pi: {\mathscr E} \to
\bd U$ and \eqref{PA1} reduces to $\Pi \circ [g] = g$ on $\TT$. The projection
$\Pi$, and hence the extension $g: \ov{\DD} \to \ov{U}$, is a homeomorphism if
and only if $\bd U$ is a Jordan curve. \vs

Now suppose $U, \UT \subset \Chat$ are disks with marked centers $c, \cT$ and $\varphi : \bd
U \to \bd \UT$ is a homeomorphism which is compatible with the embedding in the sphere,
that is, $\varphi$ is the restriction of an orientation-preserving homeomorphism $\Phi:
\Chat \to \Chat$ which maps $(U,c)$ to $(\UT,c)$. Evidently
$\Phi$ carries the fundamental chains in $U$ to those in $\UT$ in a one-to-one
fashion, respecting the equivalence classes. This gives an induced
homeomorphism $[\varphi]: {\mathscr E} \to \ET$ between the prime end
spaces. [Note that $[\varphi]$ depends only on $\varphi$ and not on the
particular choice of the extension $\Phi$: If $\Psi$ is another extension of $\varphi$
and $\{ O_n \}$ is a fundamental chain in $U$, then the fundamental chains $\{
O'_n = \Phi (O_n) \}$ and $\{ O''_n = \Psi(O_n) \}$ are equivalent. To see
this, fix $n$ and let $\ve$ be the spherical distance between the cross-cuts
$\bd O'_n \cap \UT$ and $\bd O'_{n+1} \cap \UT$. Since the diameter of $\bd O_m
\cap U$ tends to zero as $m \to \infty$, by uniform continuity there is an $m >
n$ so large that the cross-cut $\bd O''_m \cap \UT$ is in the
$\ve$-neighborhood of the cross-cut $\bd O'_m \cap \UT$. Then $O''_m \subset
O'_n$.] The definition of $[\varphi]$ shows that
\begin{equation}\label{PA2}
\tilde{\Pi}([\varphi](\be)) = \varphi (\Pi(\be)) \qquad \text{for all} \
\be \in {\mathscr E}.
\end{equation}
Given a pair of Riemann maps $g: (\DD,0) \to (U,c)$ and $\gT: (\DD,0) \to
(\UT,\cT)$, it follows that there is an orientation-preserving homeomorphism
$\varphi^{\circ} : \TT \to \TT$ which makes the following diagram commute:
\begin{equation}\label{PA3}
\begin{CD}
{\mathscr E} @> [\varphi] >> \ET \\
@A [g] AA  @AA [\gT] A \\
\TT @> \varphi^{\circ} >> \TT
\end{CD}
\end{equation}

\noindent
Note that changing $g$ to another Riemann map $(\DD,0) \to (U,c)$ will only
pre-compose $\varphi^{\circ}$ with a rotation. Similarly, changing $\gT$ to another
Riemann map $(\DD,0) \to (\UT,\cT)$ will only post-compose $\varphi^{\circ}$ with a
rotation. Thus, once $\varphi$ is given, $\varphi^{\circ}$ is well-defined up to pre-
and post-composition with rotations.

\begin{lemma}[Alternative characterization of $\varphi^{\circ}$]\label{EQ}
The following conditions on an orientation-preserving homeomorphism \mbox{$\sigma: \TT \to \TT$} are equivalent:
\begin{enumerate}
\item[(i)]
$\sigma=\varphi^{\circ}$, i.e., $[\gT] \circ \sigma = [\varphi] \circ [g]$ everywhere on $\TT$. \vs
\item[(ii)]
$\gT \circ \sigma = \varphi \circ g$ on the set of landing angles for $g$, hence almost everywhere on $\TT$.
\end{enumerate}
\end{lemma}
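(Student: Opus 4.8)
The plan is to establish the two implications separately, drawing only on the relations \eqref{PA1}, \eqref{PA2} and \eqref{PA3} among radial limits, accumulation sets, principal sets and prime ends, on the unlinkedness of the preimage sets $g^{-1}(p)$, and on Fatou's theorem. Write $L\subset\TT$ for the set of landing angles of $g$; by Fatou it has full Lebesgue measure, hence is dense.

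For (i) $\Longrightarrow$ (ii) I would fix $a\in L$, so that $\Lambda(a)=\{g(a)\}$ is a singleton, and then simply chase the diagram. Applying \eqref{PA1} in $\UT$, then the hypothesis (i), then \eqref{PA2}, then \eqref{PA1} in $U$ gives
$$
\tilde{\Lambda}(\sigma(a)) = \tilde{\Pi}\bigl([\gT](\sigma(a))\bigr) = \tilde{\Pi}\bigl([\varphi]([g](a))\bigr) = \varphi\bigl(\Pi([g](a))\bigr) = \varphi\bigl(\Lambda(a)\bigr) = \{\varphi(g(a))\}.
$$
A singleton accumulation set means, by definition, that $\sigma(a)$ is a landing angle of $\gT$ and $\gT(\sigma(a))=\varphi(g(a))$. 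Thus $\gT\circ\sigma=\varphi\circ g$ on $L$, and by Fatou this holds almost everywhere on $\TT$.

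For (ii) $\Longrightarrow$ (i) it is enough to prove $\sigma=\varphi^{\circ}$, since (i) is then exactly the diagram \eqref{PA3}. The first move is to feed $\varphi^{\circ}$ itself, which satisfies (i) tautologically, into the implication just proved; this gives $\gT\circ\varphi^{\circ}=\varphi\circ g=\gT\circ\sigma$ on $L$, so $\sigma$ and $\varphi^{\circ}$ have equal $\gT$-radial-limits on the dense set $L$, and the point is to upgrade this to $\sigma=\varphi^{\circ}$. I would argue by contradiction. Suppose $x:=\sigma(a_0)\neq\varphi^{\circ}(a_0)=:y$ for some $a_0\in L$; then both radial limits equal $\tilde{p}:=\varphi(g(a_0))$, so $x,y\in\gT^{-1}(\tilde{p})$. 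Put $E:=\{a\in L:g(a)\neq g(a_0)\}$, which has full measure by the F. and M. Riesz theorem. For each $a\in E$ the two points $\sigma(a)$ and $\varphi^{\circ}(a)$ both lie in $\gT^{-1}(\varphi(g(a)))$, and since $\varphi(g(a))\neq\tilde{p}$ the unlinkedness property confines that set to a single connected component of $\TT\sm\gT^{-1}(\tilde{p})$, hence to one of the two open arcs of $\TT\sm\{x,y\}$. So $\sigma(a)$ and $\varphi^{\circ}(a)$ always lie on the same arc of $\TT\sm\{x,y\}$. This is contradicted by orientation: labelling the open arcs of $\TT\sm\{x,y\}$ as $J_{+}$ (running counterclockwise from $x$ to $y$) and $J_{-}$, and choosing $a'\in E$ sufficiently close to $a_0$ and counterclockwise of it, orientation-preservation of $\sigma$ puts $\sigma(a')$ just counterclockwise of $x$, hence in $J_{+}$, while that of $\varphi^{\circ}$ puts $\varphi^{\circ}(a')$ just counterclockwise of $y$, hence in $J_{-}$ --- opposite arcs, a contradiction. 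Hence $\sigma=\varphi^{\circ}$ on $L$, and by continuity and density everywhere.

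The only genuinely non-formal step, I expect, is this last upgrade in (ii) $\Longrightarrow$ (i): hypothesis (ii) constrains only radial limits on a set of full measure, which is a priori weaker than equality of the induced prime-end homeomorphisms, since one boundary point can be the landing point of two distinct prime ends. Unlinkedness of the preimage sets, combined with the fact that a circle homeomorphism is locally orientation-preserving, is exactly what closes this gap; all the rest is a formal chase through \eqref{PA1}--\eqref{PA3} and Fatou's theorem.
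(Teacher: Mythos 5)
Your proof is correct. The implication (i) $\Longrightarrow$ (ii) is word for word the paper's argument (the chase through \eqref{PA1} and \eqref{PA2}), and your converse begins the same way, by feeding $\varphi^{\circ}$ itself through the proven implication to get $\gT\circ\sigma=\gT\circ\varphi^{\circ}=\varphi\circ g$ on the landing angles of $g$. Where you genuinely diverge is in the key identification step. The paper works pointwise on the full-measure set $g^{-1}(\bd U^1\cup\bd U^2)$: at a biaccessible point it introduces a second boundary point $q$ and uses preservation of cyclic order together with unlinkedness of the pairs $g^{-1}(p)$, $g^{-1}(q)$ to decide which of the two angles over $p$ goes to which angle over $\varphi(p)$; this requires the cardinality matching of $g^{-1}(p)$ and $\gT^{-1}(\varphi(p))$ (obtained by running the argument for the inverse maps as well) and the uniaccessible/biaccessible bookkeeping from the preliminaries. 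You instead argue by contradiction at an arbitrary landing angle $a_0$ where $\sigma$ and $\varphi^{\circ}$ might disagree: unlinkedness of $\gT^{-1}(\varphi(g(a)))$ against $\gT^{-1}(\tilde{p})$ forces $\sigma(a)$ and $\varphi^{\circ}(a)$ into the same component of $\TT\sm\{x,y\}$ for all landing angles $a$ with $g(a)\neq g(a_0)$ (a dense set, by F.~and M.~Riesz), while local continuity and orientation-preservation of the two circle homeomorphisms place the images of angles just counterclockwise of $a_0$ in opposite components. This is a legitimate and somewhat leaner route: it bypasses the decomposition \eqref{xn}, the countability of $\bd U^{\geq 3}$, and the reference-point case analysis, and it gives agreement at every landing angle rather than only on $g^{-1}(\bd U^1\cup\bd U^2)$. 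What the paper's version buys in exchange is the auxiliary fact, established en route, that $g(a)=p$ if and only if $\gT(\varphi^{\circ}(a))=\varphi(p)$ with matching cardinalities of fibers, which is quoted again later (in the proof of Lemma~\ref{pe=e}); your argument still yields that fact, but only as a corollary of the two directions of the lemma rather than as an intermediate step.
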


\begin{proof}
(i) $\Longrightarrow$ (ii): Suppose $a \in \TT$ is a landing angle for $g$, so the accumulation set
$\Lambda(a)$ reduces to the singleton $\{ g(a) \}$. Then by \eqref{PA1} and
\eqref{PA2},
\begin{align*}
\tilde{\Lambda}(\sigma(a)) = \tilde{\Pi}([\gT](\sigma(a))) &
= \tilde{\Pi}([\varphi]([g](a))) \\
& = \varphi(\Pi([g](a))) = \varphi(\Lambda(a)) = \{ \varphi(g(a)) \},
\end{align*}
which shows the radial limit $\gT(\sigma(a))$ exists and equals $\varphi(g(a))$. \vs

(ii) $\Longrightarrow$ (i): By the implication (i) $\Longrightarrow$ (ii) above, $\gT
\circ \varphi^{\circ} = \varphi \circ g$ on the set of landing angles for $g$. The
same argument applied to the inverse maps shows that $g \circ (\varphi^{\circ})^{-1}
= \varphi^{-1} \circ \gT$ on the set of landing angles for $\gT$. In particular,
$g(a)=p \in \bd U$ if and only if $\gT(\varphi^{\circ}(a))=\varphi(p) \in \bd \UT$.
It follows that $g^{-1}(p)$ and $\gT^{-1}(\varphi(p))$ have the same number of
elements in $[0,+\infty]$ for every $p \in \bd U$. We show that $\sigma$ and
$\varphi^{\circ}$ agree on $g^{-1}(\bd U^1 \cup \bd U^2)$ which is a full-measure set as we pointed out in \S \ref{sec:prelim}. This, by continuity, will prove  $\sigma=\varphi^{\circ}$ everywhere. \vs

Let $p \in \bd U^1 \cup \bd U^2$. The relations $\gT \circ \sigma= \gT \circ \varphi^{\circ} = \varphi \circ g$ on the set of landing angles for $g$ show that both $\sigma$ and $\varphi^{\circ}$ map
$g^{-1}(p)$ injectively into, hence bijectively onto, $\gT^{-1}(\varphi(p))$. If $p \in \bd U^1$, then $g^{-1}(p)=\{ a \}$ and $\gT^{-1}(\varphi(p))= \{ \aT \}$ for some $a,\aT \in \TT$, and
$\sigma(a)=\varphi^{\circ}(a)=\aT$ trivially. Let us then consider the case where $p
\in \bd U^2$, so $g^{-1}(p)= \{ a_1, a_2 \}$ and $\gT^{-1}(\varphi(p)) = \{ \aT_1, \aT_2 \}$
for some $a_1, a_2, \aT_1, \aT_2 \in \TT$ with $a_1 \neq a_2$ and $\aT_1 \neq \aT_2$. Take any $q
\in \bd U^1 \cup \bd U^2$ distinct from $p$. If $q \in \bd U^1$ so $g^{-1}(q)=\{ a_3 \}$ and
$\gT^{-1}(\varphi(q))=\{ \aT_3 \}$ are singletons, then both $\sigma$ and
$\varphi^{\circ}$ map $\{ a_1, a_2, a_3 \}$ to $\{ \aT_1, \aT_2, \aT_3 \}$, preserving their
cyclic order and sending $a_3$ to $\aT_3$. It follows that
$\sigma(a_1)=\varphi^{\circ}(a_1)$ and $\sigma(a_2)=\varphi^{\circ}(a_2)$. If, on the other
hand, $q \in \bd U^2$ so $g^{-1}(q)=\{ a_3, a_4 \}$ and $\gT^{-1}(\varphi(q))= \{ \aT_3, \aT_4 \}$ each have
two elements, then both $\sigma$ and $\varphi^{\circ}$ map $\{ a_1, a_2, a_3, a_4 \}$ to $\{
\aT_1, \aT_2, \aT_3, \aT_4 \}$, preserving their cyclic order, and sending the pair $\{ a_1, a_2
\}$ to $\{ \aT_1,\aT_2 \}$ and the pair $\{ a_3, a_4 \}$ to $\{ \aT_3, \aT_4 \}$. Since the
pairs $\{ a_1, a_2 \}$ and $\{ a_3, a_4 \}$ are unlinked, it follows again that
$\sigma(a_1)=\varphi^{\circ}(a_1)$ and $\sigma(a_2)=\varphi^{\circ}(a_2)$.
\end{proof}

\section{Conformal Fitness} \label{sec:coad}

We continue assuming $U, \UT \subset \Chat$ are disks with marked centers $c,\cT$ and $\varphi : \bd
U \to \bd \UT$ is a homeomorphism compatible with the embedding in the sphere.

\begin{definition}\label{CA}
We say that $\varphi: \bd U \to \bd \UT$ is {\bfit conformally fit} if there are
Riemann maps $g: (\DD,0) \to (U,c)$ and $\gT: (\DD,0) \to
(\UT,\cT)$ for which the induced homeomorphism $\varphi^{\circ}=[\gT]^{-1} \circ
[\varphi] \circ [g]: \TT \to \TT$ defined by \eqref{PA3} is the identity map. By
\lemref{EQ}, this is equivalent to the condition
$$
\gT = \varphi \circ g \quad \text{on the set of landing angles for} \ g.
$$
\end{definition}

The following (easy) theorem gives an analytic characterization of conformally fit homeomorphisms. It will be convenient to say that $\varphi : \bd U \to \bd \UT$ is the {\bfit radial limit map} of a homeomorphism
$\Phi: (U,c) \to (\UT,\cT)$ if whenever a ray $\gamma(a) \subset U$ lands at $p \in \bd
U$, the image arc $\Phi(\gamma(a)) \subset \UT$ lands at $\varphi(p) \in \bd
\UT$.

\begin{theorem}[Analytic characterization of conformal fitness] \label{adan}
The following conditions on $\varphi: \bd U \to \bd \UT$ are equivalent:
\begin{enumerate}
\item[(i)]
$\varphi$ is conformally fit. \vs
\item[(ii)]
$\varphi$ is the radial limit map of a biholomorphism $\Phi:(U,c) \to (\UT,\cT)$.
\end{enumerate}
\end{theorem}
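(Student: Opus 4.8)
Both implications are a matter of composing or decomposing conformal maps, so the plan is short.

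For (i) $\Longrightarrow$ (ii): assume $\varphi$ is conformally fit and fix Riemann maps $g:(\DD,0)\to(U,c)$ and $\gT:(\DD,0)\to(\UT,\cT)$ witnessing this, so that by \defref{CA} we have $\gT=\varphi\circ g$ on the set of landing angles for $g$. Set $\Phi=\gT\circ g^{-1}:(U,c)\to(\UT,\cT)$, which is manifestly a biholomorphism fixing the marked centers. The point is that $\Phi$ carries rays to rays with the same angle: since $g^{-1}(\gamma_g(a))=\{ra:0<r<1\}$, we get $\Phi(\gamma_g(a))=\gT(\{ra:0<r<1\})=\gamma_{\gT}(a)$. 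Now if a ray $\gamma_g(a)$ lands at $p\in\bd U$, then $a$ is a landing angle for $g$ with $g(a)=p$, hence $\gT(a)=\varphi(g(a))=\varphi(p)$; that is, $\gamma_{\gT}(a)=\Phi(\gamma_g(a))$ lands at $\varphi(p)$. This is exactly the statement that $\varphi$ is the radial limit map of $\Phi$. (One should note in passing that replacing $g$ by another Riemann map $(\DD,0)\to(U,c)$ only rotates all angles by a common amount and so does not affect which arcs occur as rays, so the conclusion does not depend on the choice made.)

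For (ii) $\Longrightarrow$ (i): assume $\varphi$ is the radial limit map of a biholomorphism $\Phi:(U,c)\to(\UT,\cT)$. Pick any Riemann map $g:(\DD,0)\to(U,c)$ and put $\gT=\Phi\circ g$. Here is the only place where holomorphicity of $\Phi$ is essential: because $\Phi$ is a biholomorphism fixing $c\mapsto\cT$, the composite $\gT$ is again a Riemann map $(\DD,0)\to(\UT,\cT)$ (a merely topological $\Phi$ would not give this). As before $\gamma_{\gT}(a)=\Phi(\gamma_g(a))$, so whenever $\gamma_g(a)$ lands at $p$ (i.e. $a$ is a landing angle for $g$ with $g(a)=p$), the hypothesis on $\Phi$ forces $\gamma_{\gT}(a)$ to land at $\varphi(p)$, which means $\gT(a)=\varphi(p)=\varphi(g(a))$. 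Thus $\gT=\varphi\circ g$ holds on the set of landing angles for $g$, which by Fatou's theorem has full measure in $\TT$; by \lemref{EQ} (or directly by \defref{CA}) this says $\varphi^{\circ}=[\gT]^{-1}\circ[\varphi]\circ[g]$ is the identity, i.e. $\varphi$ is conformally fit.

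There is no serious obstacle here; the only points that require a word of care are the identification of the image $\Phi(\gamma_g(a))$ with the ray $\gamma_{\gT}(a)$ for the Riemann map $\gT=\Phi\circ g$ (so that ``landing'' of the image arc is genuine radial convergence and \lemref{EQ} applies), the harmless dependence of rays on the choice of Riemann map, and the observation that the holomorphicity clause in (ii) is exactly what is needed to turn $\Phi\circ g$ into a Riemann map in the second implication.
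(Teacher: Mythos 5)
Your proof is correct and follows the paper's argument essentially verbatim: for (i) $\Longrightarrow$ (ii) you take $\Phi=\gT\circ g^{-1}$ and use that it sends rays to rays of the same angle, and for (ii) $\Longrightarrow$ (i) you set $\gT=\Phi\circ g$ and pass to radial limits, invoking \lemref{EQ} and \defref{CA} exactly as the paper does. The extra remarks (independence of the choice of Riemann map, identification of $\Phi(\gamma_g(a))$ with $\gamma_{\gT}(a)$) are fine and only make explicit what the paper leaves implicit.
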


\begin{proof}
(i) $\Longrightarrow$ (ii): Choose Riemann maps $g: (\DD,0) \to (U,c)$ and
$\gT:(\DD,0) \to (\UT,\cT)$ such that $\gT = \varphi \circ g$ on the set of landing
angles for $g$. The biholomorphism $\Phi = \gT \circ g^{-1} : (U,c) \to (\UT,\cT)$
sends rays in $U$ to rays in $\UT$, preserving their angles. It follows that
$\varphi$ is the radial limit map of $\Phi$. \vs

(ii) $\Longrightarrow$ (i): Let $\varphi$ be the radial limit map of a biholomorphism $\Phi:(U,c) \to (\UT,\cT)$, choose any Riemann map $g :(\DD,0) \to (U,c)$ and define $\gT = \Phi \circ g : (\DD,0) \to (\UT,\cT)$. Taking radial limits then shows $\gT = \varphi \circ g$ on the set of landing angles for $g$.
\end{proof}

\begin{corollary}
Suppose $\bd U, \bd \UT$ are locally connected. Then $\varphi: \bd U \to \bd
\UT$ is conformally fit if and only if there exists a biholomorphism $\Phi: (U,c) \to (\UT,\cT)$ which extends continuously to $\varphi$.
\end{corollary}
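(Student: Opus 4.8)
The plan is to reduce everything to \thmref{adan} by invoking Carathéodory's theorem, recalled in \S\ref{sec:prelim}, that a Riemann map onto a disk with locally connected boundary extends continuously to the closed disk. Under the present hypothesis every $a \in \TT$ is a landing angle for any Riemann map $g : (\DD,0) \to (U,c)$, and the value of the continuous extension at $a$ coincides with the radial limit $g(a)$; likewise for $\UT$. Consequently the notions ``radial limit map of a biholomorphism $\Phi$'' and ``continuous boundary extension of $\Phi$ equal to $\varphi$'' become interchangeable, and the corollary should fall out of the equivalence (i) $\Longleftrightarrow$ (ii) of \thmref{adan}.

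For the ``if'' direction, suppose $\Phi : (U,c) \to (\UT,\cT)$ is a biholomorphism extending continuously to $\ov{U}$ with $\Phi|_{\bd U} = \varphi$. Pick any Riemann map $g : (\DD,0) \to (U,c)$; since $\bd U$ is locally connected, $g$ extends continuously to $\ov{\DD} \to \ov{U}$, so each ray $\gamma(a)$ lands at $g(a)$. Then $\lim_{r \to 1} \Phi(g(ra)) = \Phi(g(a)) = \varphi(g(a))$ by continuity of $\Phi$ at $g(a)$, i.e. the image arc $\Phi(\gamma(a))$ lands at $\varphi(g(a))$. Hence $\varphi$ is the radial limit map of the biholomorphism $\Phi$, and \thmref{adan} yields conformal fitness. (Equivalently, $\gT := \Phi \circ g$ is a Riemann map $(\DD,0) \to (\UT,\cT)$ with $\gT = \varphi \circ g$ on all of $\TT$, so $\varphi^{\circ} = \text{id}$ and \defref{CA} applies directly.)

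For the ``only if'' direction, suppose $\varphi$ is conformally fit. By \thmref{adan} there is a biholomorphism $\Phi : (U,c) \to (\UT,\cT)$ whose radial limit map is $\varphi$; the task is to produce a \emph{continuous} extension of $\Phi$ to $\ov{U}$ with boundary values $\varphi$. Fix a Riemann map $g : (\DD,0) \to (U,c)$ and set $\gT := \Phi \circ g : (\DD,0) \to (\UT,\cT)$, another Riemann map. Local connectivity of $\bd U$ and $\bd \UT$ makes both $g$ and $\gT$ extend to continuous surjections $\ov{\DD} \to \ov{U}$ and $\ov{\DD} \to \ov{\UT}$, and $g : \ov{\DD} \to \ov{U}$, being a continuous map from a compact space to a Hausdorff space, is closed and hence a quotient map. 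The key observation is that $\gT$ is constant on the fibers of $g$: if $g(a_1) = g(a_2) = p \in \bd U$ with $a_1, a_2 \in \TT$, then both rays $\gamma(a_1), \gamma(a_2)$ land at $p$, so $\Phi(\gamma(a_1)) = \gamma_{\gT}(a_1)$ and $\Phi(\gamma(a_2)) = \gamma_{\gT}(a_2)$ land at $\varphi(p)$, giving $\gT(a_1) = \gT(a_2) = \varphi(p)$. Since $g$ is a quotient map and $\gT$ respects its fibers, $\gT$ factors as $\gT = \ov{\Phi} \circ g$ for a unique continuous $\ov{\Phi} : \ov{U} \to \ov{\UT}$; restricting to $\DD$ shows $\ov{\Phi}|_U = \Phi$, and for $p = g(a) \in \bd U$ we get $\ov{\Phi}(p) = \gT(a) = \varphi(p)$, so $\ov{\Phi}$ is the required extension.

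The only mildly delicate point is this last continuity upgrade in the forward direction: \thmref{adan} supplies $\Phi$ together only with its behaviour along rays, not along arbitrary approaches to $\bd U$, and the quotient-map argument is what converts the former into the latter. Alternatively one can argue sequentially: any $z_n \to p$ in $U$ lifts via $g$ to $w_n \in \DD$ with, after passing to a subsequence, $w_n \to w \in \TT$ and $g(w) = p$, whence $\Phi(z_n) = \gT(w_n) \to \gT(w) = \varphi(p)$, the limit being independent of the subsequence because $\gT(w) = \varphi(p)$ for every $w \in g^{-1}(p) \cap \TT$ as shown above.
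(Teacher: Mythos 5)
Your proof is correct and follows essentially the same route as the paper: by Carath\'eodory's theorem $g$ and $\gT$ extend continuously to $\ov{\DD}$, the relation $\gT = \varphi \circ g$ then holds everywhere on $\TT$, and $\Phi = \gT \circ g^{-1}$ is the required biholomorphism. You are in fact somewhat more careful than the paper's one-line argument, since your quotient-map (or sequential) step explicitly upgrades the boundary identity on $\TT$ to continuity of $\Phi$ on $\ov{U}$ even when $g$ fails to be injective on $\TT$, and you also spell out the easy converse direction.
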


\begin{proof}
The biholomorphism $\Phi= \gT \circ g^{-1}: (U,c) \to (\UT,\cT)$ in the proof of \thmref{adan} extends continuously to $\varphi$ since by the theorem of Carath\'eodory both $g$ and $\gT$ extend continuously to the
closed unit disk, so the relation $\gT = \varphi \circ g$ holds everywhere on $\TT$. 
\end{proof}

Conformal fitness can also be characterized dynamically in terms of rotations.
Let $U \subset \Chat$ be a disk with the marked center $c$ and $g: (\DD,0) \to (U,c)$ be
a Riemann map. For each $\theta \in \RR/\ZZ$, let $\rt: (\DD,0) \to (\DD,0)$ denote the rigid rotation $z \mapsto e^{2 \pi i \theta} z$. We call the conformal automorphism $\rot = g
\circ \rt \circ g^{-1} : (U,c) \to (U,c)$ the {\bfit intrinsic rotation} of $U$ by the angle $\theta$ about $c$. By the Schwarz lemma the definition of $\rot$ is independent of the choice of $g$. \vs

\begin{theorem}[Dynamical characterization of conformal fitness]\label{adyn}
The following conditions on $\varphi: \bd U \to \bd \UT$ are equivalent:
\begin{enumerate}
\item[(i)]
$\varphi$ is conformally fit. \vs
\item[(ii)]
$\varphi$ is the radial limit map of a homeomorphism $\Phi: (U,c) \to (\UT,\cT)$
which satisfies $\Phi \circ \rot = \tilde{\rho}_{\theta} \circ \Phi$ for some irrational $\theta \in \RR / \ZZ$.
\end{enumerate}
\end{theorem}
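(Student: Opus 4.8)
The plan is to prove the two implications separately. The implication (i)$\,\Longrightarrow\,$(ii) is essentially formal: conformal fitness of $\varphi$ furnishes Riemann maps $g:(\DD,0)\to(U,c)$ and $\gT:(\DD,0)\to(\UT,\cT)$ with $\gT=\varphi\circ g$ on the set of landing angles for $g$, so I would put $\Phi:=\gT\circ g^{-1}$, note as in the proof of \thmref{adan} that this biholomorphism $(U,c)\to(\UT,\cT)$ has radial limit map $\varphi$, and then verify the conjugacy relation by the one-line computation $\Phi\circ\rot=\gT\circ\rt\circ g^{-1}=(\gT\circ\rt\circ\gT^{-1})\circ\Phi=\tilde{\rho}_{\theta}\circ\Phi$, which in fact holds for every $\theta\in\RR/\ZZ$, in particular for any irrational one.

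For (ii)$\,\Longrightarrow\,$(i) I would begin by fixing Riemann maps $g:(\DD,0)\to(U,c)$ and $\gT:(\DD,0)\to(\UT,\cT)$ and straightening $\Phi$ to $h:=\gT^{-1}\circ\Phi\circ g$, a homeomorphism of $\DD$ fixing $0$; the intrinsic-rotation identities $\rot=g\circ\rt\circ g^{-1}$ and $\tilde{\rho}_{\theta}=\gT\circ\rt\circ\gT^{-1}$ convert the hypothesis $\Phi\circ\rot=\tilde{\rho}_{\theta}\circ\Phi$ into the commutation $h\circ\rt=\rt\circ h$. The first substantive step is to exploit the irrationality of $\theta$ to determine the shape of $h$. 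For each $r\in(0,1)$ the Jordan curve $h(\{|z|=r\})$ surrounds $0$ and is invariant under $\rt$; since the closure of any $\rt$-orbit is a full circle, such an invariant curve is a union of concentric round circles, hence, being connected, a single circle $\{|z|=\rho(r)\}$; and in angular coordinates $h$ then induces an orientation-preserving homeomorphism of $\RR/\ZZ$ commuting with the translation by $\theta$, which must be a rotation. Thus $h$ has the form $h(z)=\rho(|z|)\,e^{2\pi i\kappa(|z|)}\,z/|z|$ for a continuous increasing homeomorphism $\rho:(0,1)\to(0,1)$ and a continuous map $\kappa:(0,1)\to\RR/\ZZ$, so that $\Phi(g(ra))=\gT\big(\rho(r)\,e^{2\pi i\kappa(r)}\,a\big)$ for all $a\in\TT$ and $0<r<1$.

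The heart of the matter, and the step I expect to be the main obstacle, is to show that $\kappa(r)$ converges as $r\to1$, say to $\kappa_{\infty}\in\RR/\ZZ$. Suppose not; then the accumulation set of $\kappa$ at $1$ is a connected subset of $\RR/\ZZ$ with more than one point, hence a closed arc $J$ of positive length. Fix a landing angle $a=e^{2\pi i s_0}$ for $g$ and a value $\kappa'$ in the interior of $J$; by the intermediate value theorem there is a sequence $r_n\uparrow1$ with $\kappa(r_n)=\kappa'$. Writing $b':=e^{2\pi i(s_0+\kappa')}$, the fact that $\Phi(\gamma_g(a))$ lands at $\varphi(g(a))$ gives $\gT(\rho(r_n)\,b')=\Phi(g(r_n a))\to\varphi(g(a))$, and since $\rho(r_n)\to1$ this forces $\gT(b')=\varphi(g(a))$ whenever $b'$ is a landing angle for $\gT$. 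As the non-landing angles of $\gT$ form a null set, $\gT$ would then take the constant value $\varphi(g(a))$ on a positive-measure subset of the arc $\{e^{2\pi i(s_0+\kappa')}:\kappa'\in J\}$, contradicting the theorem of F. and M. Riesz (which says $\gT^{-1}$ of a point has measure zero). This contradiction forces $\kappa(r)\to\kappa_{\infty}$.

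To conclude, I would fix any landing angle $a$ for $g$: the path $r\mapsto\rho(r)\,e^{2\pi i\kappa(r)}\,a$ lies in $\DD$, converges to $b:=e^{2\pi i\kappa_{\infty}}\,a\in\TT$ (using $\rho(r)\to1$ and $\kappa(r)\to\kappa_\infty$), and has $\gT$ tending to $\varphi(g(a))$ along it, so Lindel\"of's theorem (the path version recalled in \secref{sec:prelim}) shows that the radial limit $\gT(b)$ exists and equals $\varphi(g(a))$. Consequently the Riemann map $\gT_1:=\gT\circ R$, with $R$ the rotation $z\mapsto e^{2\pi i\kappa_{\infty}}z$, satisfies $\gT_1(a)=\gT(b)=\varphi(g(a))$ at every landing angle $a$ for $g$; by \defref{CA} applied to the pair $(g,\gT_1)$ this means $\varphi$ is conformally fit. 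Apart from the convergence of $\kappa$, the only other delicate point is the identification of the shape of $h$, where one uses that an irrational rotation has dense orbits and that a connected union of concentric circles is a single circle; everything else is routine.
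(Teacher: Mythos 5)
Your proposal is correct and follows the same overall strategy as the paper's proof of \thmref{adyn}: straighten $\Phi$ to a self-homeomorphism of $\DD$ fixing $0$ that commutes with the rotation, use minimality of the irrational rotation to show it has the skew form (rotation in the angular variable, reparametrization in the radial one), prove the angular part has a limit at the boundary, and finish with Lindel\"of's theorem and absorbing a rotation into $\gT$; your (i) $\Rightarrow$ (ii) is identical to the paper's. The differences are local but worth noting. Where you obtain the form of $h$ from invariant Jordan curves plus the fact that a circle homeomorphism commuting with an irrational rotation is itself a rotation, the paper first upgrades the single conjugacy relation to all angles $\theta$ (the same density input) and reads off $\psi(re^{2\pi i t})=\psi(r)e^{2\pi i t}$ directly. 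More substantively, for the convergence of $\kappa(r)$ --- equivalently the landing of the arc $r \mapsto h(ra)$ --- the paper simply cites \cite[Corollary 17.10]{Mi}, whereas you give a self-contained cluster-set argument: if the limit failed to exist, the radial limits of $\gT$ along a positive-length arc of angles would all equal the single value $\varphi(g(a))$, contradicting the F.\ and M.\ Riesz theorem. This in effect reproves the special case of Milnor's corollary that is needed, at the cost of some length; the only point to tighten is that your intermediate-value step should be stated for almost every $\kappa'$ in the interior of the cluster arc (which your argument does yield, with at most one exceptional value when the cluster set is the whole circle), since the contradiction requires a positive-measure set of angles rather than a single one.
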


\begin{proof}
(i) $\Longrightarrow$ (ii): Choose Riemann maps $g: (\DD,0) \to (U,c)$ and $\gT :
(\DD,0) \to (\UT,\cT)$ such that $\gT = \varphi \circ g$ on the set of landing angles
for $g$, and set $\Phi= \gT \circ g^{-1}$. Then, as in the proof of \thmref{adan},
$\varphi$ is the radial limit map of $\Phi$. Moreover, for every $\theta$,
$$
\Phi \circ \rot = \Phi \circ g \circ \rt \circ g^{-1}
= \gT \circ \rt \circ g^{-1}
= \tilde{\rho}_{\theta} \circ \gT \circ g^{-1} = \tilde{\rho}_{\theta} \circ
\Phi.
$$

(ii) $\Longrightarrow$ (i): Suppose $\varphi$ is the radial limit map of $\Phi: (U,c) \to (\UT,\cT)$ and $\Phi \circ \rot = \tilde{\rho}_{\theta} \circ \Phi$ for some irrational $\theta$. Since irrational rotations of the circle have dense orbits, it follows that the same relation holds for every $\theta$. Choose any pair of Riemann maps $g: (\DD,0) \to (U,c)$ and $\gT:(\DD,0) \to (\UT,\cT)$. The homeomorphism $\psi = \gT^{-1} \circ \Phi \circ g : (\DD,0) \to (\DD,0)$ commutes with every rigid rotation $\rt$, so it must satisfy 
\begin{equation}\label{roor}
\psi (r e^{2 \pi i t}) = \psi(r) \, e^{2 \pi i t}
\end{equation}
for every $r \in [0,1)$ and $t \in \RR$. Suppose that a ray $\gamma_g(a) \subset U$ lands at $p \in \bd U$, so
the image arc $\gamma=\Phi(\gamma_g(a))$ lands at $\varphi(p) \in \bd \UT$. By
elementary conformal mapping theory, the pull-back arc $\gT^{-1}(\gamma) = \{
\psi(ra): 0<r<1 \}$ lands at a well-defined point $b \in \TT$ \cite[Corollary
17.10]{Mi}. It follows from Lindel\"{o}f's theorem that the ray $\gamma_{\gT}(b)
\subset \UT$ lands at $\varphi(p)$. Setting $a=e^{2 \pi i t}$ in \eqref{roor} and
letting $r \to 1$, we see that $\lim_{r \to 1} \psi(r)$ exists and equals $b/a$. 
It follows that $\psi$ extends homeomorphically to the closed disk, $\psi|_{\TT}$
is the rigid rotation $z \mapsto (b/a) z$, and $\gT \circ \psi = \varphi \circ g$ on
the set of landing angles for $g$. Replacing $\gT$ by the Riemann map $z \mapsto
\gT((b/a)z)$, we obtain $\gT = \varphi \circ g$ on the set of landing angles for
$g$, which shows $\varphi$ is conformally fit.
\end{proof}

\section{Measure-Theoretic Characterization of Conformal Fitness}
\label{sec:meas}

Suppose $U \subset \Chat$ is a disk with the marked center $c$
and $g: (\DD,0) \to (U,c)$ is a Riemann map. Recall that $g(a)$
denotes the radial limit $\lim_{r \to 1} g(ra)$, which exists for Lebesgue almost
every $a \in \TT$. For $X \subset \bd U$, set
$$
g^{-1}(X) = \{ a \in \TT: g(a) \ \text{exists and belongs to} \ X \}.
$$
Define
\begin{align}\label{MM}
{\mathscr M}_B & = \{ X \subset \bd U: g^{-1}(X) \ \text{is a Borel set} \} \notag \\
{\mathscr M}_L & = \{ X \subset \bd U: g^{-1}(X) \ \text{is Lebesgue measurable} \}.
\end{align}
It is not hard to show that ${\mathscr M}_B$ and ${\mathscr M}_L$ are
$\sigma$-algebras containing all Borel subsets of $\bd U$ \cite[Proposition
6.5]{Po}. In particular, $\bd U \in {\mathscr M}_B$, so the measure-zero set
$\TT \sm g^{-1}(\bd U)$ consisting of all non-landing angles for $g$ is Borel. The
normalized Lebesgue measure $m$ on $\TT$ pushes forward under $g$ to
the probability measure $\omega = g_{\ast} \, m$ defined on ${\mathscr M}_L$,
which is called the {\bfit harmonic measure} on $\bd U$ as seen from the center $c$.
Explicitly,
$$
\omega(X) = m(g^{-1}(X)) \ \qquad \text{for} \ X \in {\mathscr M}_L.
$$
It follows from the Schwarz lemma and rotational invariance of $m$ that ${\mathscr M}_B$, ${\mathscr M}_L$ and $\omega$ do not depend
on the choice of the Riemann map $g$. \vs

The decomposition \eqref{xn} of $\bd U$ induces a decomposition
$$
\TT = A^0 \cup A^1 \cup A^2 \cup A^{\geq 3},
$$
where
\begin{align*}
A^0 & = \TT \sm g^{-1}(\bd U), \\
A^n & = g^{-1}(\bd U^n) \qquad (n=1,2), \\
A^{\geq 3} & = g^{-1}(\bd U^{\geq 3}).
\end{align*}
Note that these sets depend on $U$ as well as the choice of $g$, but
changing $g$ to another Riemann map $(\DD,0) \to (U,c)$ will only rotate them.

\begin{lemma}[Almost every point is uniaccessible or biaccessible]\label{baba}
The sets $\bd U^0, \bd U^1, \bd U^2$, and $\bd U^{\geq 3}$ belong to ${\mathscr
M}_L$, with $\omega(\bd U^0 \cup \bd U^{\geq 3})=0$. Hence,
\begin{equation}\label{bibi1}
\bd U = \bd U^1 \cup \bd U^2 \qquad \text{up to a set of harmonic measure zero}
\end{equation}
and
\begin{equation}\label{bibi2}
\TT = A^1 \cup A^2 \qquad \text{up to a set of Lebesgue measure zero}.
\end{equation}
\end{lemma}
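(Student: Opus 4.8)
The goal is to show that the four sets $\bd U^0, \bd U^1, \bd U^2, \bd U^{\geq 3}$ lie in $\mathscr M_L$ and that the union of the first and the last has harmonic measure zero; the two displayed statements \eqref{bibi1} and \eqref{bibi2} are then immediate, since $\omega(\bd U^0 \cup \bd U^{\geq 3})=0$ means $m(A^0 \cup A^{\geq 3}) = \omega(\bd U^0) + \omega(\bd U^{\geq 3}) = 0$ by definition of the pushforward, and $A^1, A^2$ are measurable because their images under $g^{-1}$ are. So the real content is the membership in $\mathscr M_L$ together with the measure-zero claim, and in fact most of this has already been assembled in \S\ref{sec:prelim}: there I pointed out that $\bd U^{\geq 3}$ is at most countable (a simple topological argument via unlinkedness of the sets $g^{-1}(p)$), and that by the F. and M. Riesz theorem $g^{-1}(p)$ has Lebesgue measure zero for every $p \in \bd U$; combining these gives that $g^{-1}(\bd U^1 \cup \bd U^2)$ has full measure in $\TT$.

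The plan is therefore as follows. First I would record that $\bd U^0 \in \mathscr M_L$: indeed $g^{-1}(\bd U^0) = \TT \sm g^{-1}(\bd U) = A^0$, which is exactly the Borel set of non-landing angles noted just before the lemma, so $\bd U^0 \in \mathscr M_B \subset \mathscr M_L$ and $\omega(\bd U^0) = m(A^0) = 0$ by Fatou's theorem. Next, $\bd U^{\geq 3}$ is at most countable by the argument in \S\ref{sec:prelim}; a countable set is Borel, hence in $\mathscr M_B$, and $\omega(\bd U^{\geq 3}) = m(g^{-1}(\bd U^{\geq 3})) = \sum_{p \in \bd U^{\geq 3}} m(g^{-1}(p)) = 0$ by countable additivity and the F. and M. Riesz theorem. (One should note $g^{-1}(\bd U^{\geq 3})$ is a countable union of measure-zero sets, hence Lebesgue measurable of measure zero, which also re-confirms $\bd U^{\geq 3} \in \mathscr M_L$.) This already gives $\omega(\bd U^0 \cup \bd U^{\geq 3}) = 0$.

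It remains to see $\bd U^1, \bd U^2 \in \mathscr M_L$. Here I would argue that $\bd U^1 \cup \bd U^2 = \bd U \sm (\bd U^0 \cup \bd U^{\geq 3})$, and $\bd U \in \mathscr M_B$ while $\bd U^0, \bd U^{\geq 3} \in \mathscr M_L$ (in fact $\in \mathscr M_B$), so $\bd U^1 \cup \bd U^2 \in \mathscr M_L$ as a complement within a $\sigma$-algebra. Since $g^{-1}(\bd U^1 \cup \bd U^2)$ has full measure, it suffices to separate the two pieces up to a null set. For this I would show $\bd U^2 \in \mathscr M_L$ directly; then $\bd U^1 = (\bd U^1 \cup \bd U^2) \sm \bd U^2$ is measurable too. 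To handle $\bd U^2$, note that a boundary point $p$ is biaccessible precisely when $g^{-1}(p)$ splits $\TT$ into exactly two arcs; I would cover $\bd U^2$ by countably many pieces indexed by a rational partition of $\TT$ into two arcs $I, J$, letting $\bd U^2_{I,J}$ be the set of $p$ with one landing angle in $I$ and one in $J$ and none elsewhere, and check that the $g$-preimage of each such piece differs from a Borel set by a null set. The main obstacle is precisely this measurability bookkeeping for $\bd U^2$: making the "exactly two landing rays" condition into a countable Boolean combination of events whose preimages we already know to be measurable, while keeping track of the F. and M. Riesz null sets; once that is set up, everything else follows from \cite[Proposition 6.5]{Po} and the facts already established in \S\ref{sec:prelim}.
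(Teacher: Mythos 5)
The routine parts of your plan (countability of $\bd U^{\geq 3}$, the F.~and~M.~Riesz theorem, $\omega(\bd U^0\cup\bd U^{\geq 3})=0$, and the deduction of \eqref{bibi1} and \eqref{bibi2}) agree with the paper. But the heart of the lemma is the measurability of $\bd U^2$ (equivalently of $A^2=g^{-1}(\bd U^2)$), and there your proposal has a genuine gap: you reduce everything to the claim that, for a rational partition of $\TT$ into arcs $I,J$, the $g$-preimage of the piece $\bd U^2_{I,J}$ ``differs from a Borel set by a null set,'' and you propose to verify this by writing the ``exactly two landing rays'' condition as a countable Boolean combination of events whose preimages are already known measurable. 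No such combination is exhibited, and none is in sight: the preimage of your piece is of the form $\{a\in I:\ \exists\, b\in J \text{ with } g(b)=g(a)\ (\text{and no further landing angles})\}$, i.e.\ it still carries an existential quantifier over an uncountable set of angles. Cutting $\TT$ into rational arcs does not remove that quantifier, so the ``main obstacle'' you name is exactly the original problem, deferred rather than solved. Indeed the set in question is a projection of a Borel subset of $\TT\times\TT$, and projections of Borel sets need not be Borel, which is why one should not expect a purely Borel bookkeeping argument to close this step.

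The paper resolves it with a short descriptive-set-theoretic argument: the set $P=\{(a,b)\in(A^1\cup A^2)^2: g(a)=g(b)\}\sm\{\text{diagonal}\}$ is Borel in $\TT\times\TT$, because the condition $g(a)=g(b)$ can be rewritten as $\lim_{r\to 1}\dist(g(ra),g(rb))=0$; then $A^2$ is the projection of $P$ onto the first coordinate, hence analytic in the sense of Suslin and therefore Lebesgue measurable, and $A^1$ is measurable since $A^1\cup A^2$ is Borel of full measure. You would need either this argument or a genuinely new one to make your step work. One further small slip: $g^{-1}(\bd U^0)$ is not $A^0=\TT\sm g^{-1}(\bd U)$; by the paper's convention $g^{-1}(X)$ consists of angles whose radial limit \emph{exists} and lies in $X$, so $g^{-1}(\bd U^0)=\es$. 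Your conclusions $\bd U^0\in{\mathscr M}_B$ and $\omega(\bd U^0)=0$ still hold (even more easily), but the identification should be corrected.
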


\begin{proof}
Since $g^{-1}(\bd U^0)=\es$, we have $\bd U^0 \in {\mathscr M}_B$ and $\omega(\bd
U^0)=0$. As pointed out in \S \ref{sec:prelim}, the set $\bd U^{\geq 3}$ is at most
countable, hence Borel. Therefore, $\bd U^{\geq 3} \in {\mathscr M}_B$, which means
$A^{\geq 3}$ is Borel. Since by the theorem of F. and M. Riesz the harmonic measure
of a single point on $\bd U$ is zero, it follows that $\omega(\bd U^{\geq
3})=m(A^{\geq 3})=0$. This proves \eqref{bibi1}. The complement of $A^1 \cup A^2$ in
$\TT$ is $A^{\geq 3}$ union the set $A^0$ of all non-landing angles for $g$.
Both these sets are Borel and have Lebesgue measure zero, so $A^1 \cup A^2$ is a
Borel set of full measure on the circle. In particular, \eqref{bibi2} holds. \vs

It remains to show that $\bd U^1$ and $\bd U^2$ belong to ${\mathscr M}_L$, that is, $A^1$
and $A^2$ are Lebesgue measurable subsets of the circle. For this, observe that the set
\begin{align}\label{EE}
P & = \{ (a,b) \in (A^1 \cup A^2)^2 : g(a) = g(b) \} \sm \{ \text{diagonal} \} \\
& = \{ (a,b) \in (A^1 \cup A^2)^2 : \lim_{r \to 1} \ \text{dist}(g(ra),g(rb)) =0 \}
\sm \{ \text{diagonal} \} \notag
\end{align}
is a Borel subset of the torus $\TT \times \TT$, where ``dist'' denotes the spherical distance. Hence $A^2$, the projection of $P$ onto its first coordinate, is analytic in the sense of Suslin and therefore
measurable (recall that the projection of a Borel set need not be Borel). As $A^1
\cup A^2$ is Borel, we conclude that $A^1$ is measurable as well.
\end{proof}

Suppose now that $U, \UT \subset \Chat$ are disks with marked centers $c, \cT$ and
$\varphi: \bd U \to \bd \UT$ is a homeomorphism compatible with the embedding in the
sphere. Take any pair of Riemann maps $g: (\DD,0) \to (U,c)$ and $\gT :
(\DD,0) \to (\UT,\cT)$ and let $\varphi^{\circ} = [\gT]^{-1} \circ
[\varphi] \circ [g]: \TT \to \TT$ be the induced homeomorphism defined by
\eqref{PA3}. Consider the decompositions $\bd U = \bd U^1 \cup \bd U^2$ and $\TT =
A^1 \cup A^2$ (under $g$) as in \eqref{bibi1} and \eqref{bibi2}, and similar
decompositions $\bd \UT = \bd \UT^1 \cup \bd \UT^2$ and $\TT = \AT^1 \cup \AT^2$
(under $\gT$), all up to sets of measure zero. 

\begin{lemma}\label{pe=e}
The following diagrams are commutative:
\begin{equation}\label{twocd}
\begin{CD}
\bd U^1 @> \varphi >> \bd \UT^1 \\
@A g AA  @AA \gT A \\
A^1 @> \varphi^{\circ} >> \AT^1
\end{CD}
\qquad \text{and} \qquad
\begin{CD}
\bd U^2 @> \varphi >> \bd \UT^2 \\
@A g AA  @AA \gT A \\
A^2 @> \varphi^{\circ} >> \AT^2
\end{CD}
\end{equation}
(the vertical arrows are understood as radial limits). In particular, when $\varphi$
is conformally fit and $\gT = \varphi \circ g$ on the set of landing angles for
$g$, we have $A^1=\AT^1$ and $A^2=\AT^2$.

\end{lemma}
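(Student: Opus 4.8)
The plan is to extract everything from \lemref{EQ}. Applying that lemma both to $\varphi$ (with associated circle homeomorphism $\varphi^{\circ}$) and to the inverse homeomorphism $\varphi^{-1} : \bd \UT \to \bd U$ (whose associated circle homeomorphism is $(\varphi^{\circ})^{-1}$), we obtain $\gT \circ \varphi^{\circ} = \varphi \circ g$ on the set of landing angles for $g$, and $g \circ (\varphi^{\circ})^{-1} = \varphi^{-1} \circ \gT$ on the set of landing angles for $\gT$. As was already observed inside the proof of \lemref{EQ}, these two relations together show that for $p \in \bd U$ one has $g(a) = p$ for some $a \in \TT$ if and only if $\gT(\varphi^{\circ}(a)) = \varphi(p)$, and that $\varphi^{\circ}$ restricts to a bijection $g^{-1}(p) \to \gT^{-1}(\varphi(p))$; in particular $g^{-1}(p)$ and $\gT^{-1}(\varphi(p))$ have the same cardinality. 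Since $\varphi : \bd U \to \bd \UT$ is a bijection carrying the partition $\{ \bd U^n \}$ onto the partition $\{ \bd \UT^n \}$, it follows that $\varphi(\bd U^1) = \bd \UT^1$ and $\varphi(\bd U^2) = \bd \UT^2$.

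Next, fix $a \in A^1 = g^{-1}(\bd U^1)$, so the radial limit $p = g(a)$ exists and lies in $\bd U^1$. Since $a$ is a landing angle for $g$, \lemref{EQ}(ii) gives that $\varphi^{\circ}(a)$ is a landing angle for $\gT$ and $\gT(\varphi^{\circ}(a)) = \varphi(g(a)) = \varphi(p) \in \bd \UT^1$, whence $\varphi^{\circ}(a) \in \gT^{-1}(\bd \UT^1) = \AT^1$. This simultaneously proves $\varphi^{\circ}(A^1) \subseteq \AT^1$ and the commutativity $\gT \circ \varphi^{\circ} = \varphi \circ g$ on $A^1$, i.e. the left-hand diagram in \eqref{twocd}. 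Running the same argument with the roles of $(g, U, \varphi)$ and $(\gT, \UT, \varphi^{-1})$ interchanged yields $(\varphi^{\circ})^{-1}(\AT^1) \subseteq A^1$, so $\varphi^{\circ}$ in fact maps $A^1$ bijectively onto $\AT^1$. The right-hand diagram is obtained verbatim with $\bd U^2, \bd \UT^2, A^2, \AT^2$ replacing $\bd U^1, \bd \UT^1, A^1, \AT^1$, using $\varphi(\bd U^2) = \bd \UT^2$ in place of $\varphi(\bd U^1) = \bd \UT^1$.

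For the last assertion, if $\varphi$ is conformally fit and $\gT = \varphi \circ g$ on the set of landing angles for $g$, then $\varphi^{\circ}$ is the identity map of $\TT$ by \defref{CA}; substituting this into the equalities $\varphi^{\circ}(A^1) = \AT^1$ and $\varphi^{\circ}(A^2) = \AT^2$ just established gives $A^1 = \AT^1$ and $A^2 = \AT^2$. There is no genuinely hard step here — the whole content is packaged in \lemref{EQ} — and the only point that deserves attention is that one must invoke the cardinality-preserving conclusion of \lemref{EQ}, rather than the coarser fact that $\varphi^{\circ}$ sends a set of full measure to a set of full measure, since it is precisely the former that forces $\varphi^{\circ}$ to respect the pieces $A^1$ and $A^2$ individually.
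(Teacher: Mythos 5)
Your proof is correct and follows essentially the same route as the paper: the paper's own proof is a one-line appeal to the fact, established inside the proof of Lemma~\ref{EQ}, that $g(a)=p$ if and only if $\gT(\varphi^{\circ}(a))=\varphi(p)$, which is exactly the ingredient you extract and then spell out (cardinality preservation, hence $\varphi(\bd U^n)=\bd\UT^n$, hence the diagrams, with $\varphi^{\circ}=\mathrm{id}$ giving the final claim). Your closing remark that one needs the pointwise/cardinality statement rather than a mere full-measure statement is a fair and accurate observation about why the pieces $A^1$, $A^2$ are preserved individually.
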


\begin{proof}
This follows from the fact that for $p \in \bd U$ and $a \in \TT$, $g(a)=p$ if
and only if $\gT(\varphi^{\circ}(a))= \varphi(p)$ (compare the proof of \lemref{EQ}).
\end{proof}

\begin{theorem}[Conformal fitness and harmonic measure]\label{HM1}
Suppose $U, \UT \subset \Chat$ are disks with marked centers $c,\cT$ and $\varphi: \bd U \to
\bd \UT$ is a homeomorphism compatible with the embedding in the sphere. Let
$\omega, \tilde{\omega}$ be the harmonic measures on $\bd U, \bd \UT$ as seen
from $c, \cT$.
\begin{enumerate}
\item[(i)]
If $\varphi$ is conformally fit, then $\varphi_{\ast} \, \omega =
\tilde{\omega}$. \vs
\item[(ii)]
If $\varphi_{\ast} \, \omega = \tilde{\omega}$ and if $\omega(\bd U^2)=0$, then
$\varphi$ is conformally fit.
\end{enumerate}
\end{theorem}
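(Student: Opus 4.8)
The plan is to prove the two implications separately: (i) is a direct change of variables, and (ii) is a measure-theoretic rigidity statement for the induced circle homeomorphism $\varphi^{\circ}$ of \eqref{PA3}.

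For (i), I would choose Riemann maps $g:(\DD,0)\to(U,c)$ and $\gT:(\DD,0)\to(\UT,\cT)$ realizing conformal fitness, so that $\gT=\varphi\circ g$ on the set of landing angles for $g$, which is of full Lebesgue measure. Then for any Borel $X\subset\bd\UT$ the sets $\gT^{-1}(X)$ and $g^{-1}(\varphi^{-1}(X))$ differ by a Lebesgue-null set, because $\varphi$ is a homeomorphism and the two radial-limit maps agree a.e.; pushing $m$ forward yields
\[
\tilde\omega(X)=m(\gT^{-1}(X))=m\big(g^{-1}(\varphi^{-1}(X))\big)=\omega(\varphi^{-1}(X))=(\varphi_{\ast}\omega)(X).
\]
All the care this requires is tracking membership in the $\sigma$-algebras ${\mathscr M}_B,{\mathscr M}_L$ of \eqref{MM}, which is immediate once one recalls that the set of non-landing angles is Borel of measure zero.

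For (ii), the strategy is to show that $\varphi^{\circ}:\TT\to\TT$ pushes $m$ forward to itself, hence is a rigid rotation $R_{\alpha}$; then replacing $\gT$ by the Riemann map $z\mapsto\gT(e^{2\pi i\alpha}z)$ makes the new induced map the identity, and $\varphi$ is conformally fit by \defref{CA}. First I would invoke \lemref{pe=e} --- whose proof uses only that $g(a)=p\iff\gT(\varphi^{\circ}(a))=\varphi(p)$, hence that $g^{-1}(p)$ and $\gT^{-1}(\varphi(p))$ have equal cardinality, and so does not presuppose fitness --- to conclude that $\varphi$ carries $\bd U^n$ bijectively onto $\bd\UT^n$ for each $n$. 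Combined with the hypothesis $\varphi_{\ast}\omega=\tilde\omega$, this gives $\tilde\omega(\bd\UT^2)=\omega(\bd U^2)=0$; since $m(A^2)=\omega(\bd U^2)$ and $m(\AT^2)=\tilde\omega(\bd\UT^2)$, \lemref{baba} then shows that $A^1$ and $\AT^1$ have full measure in $\TT$. Moreover $g$ restricts to an injection $A^1\to\bd U^1$ (two distinct angles with a common image would make that image biaccessible), in fact a bijection onto $\bd U^1$, and likewise $\gT$ restricts to a bijection $\AT^1\to\bd\UT^1$. By the first commutative diagram in \lemref{pe=e} one then has $\varphi^{\circ}=(\gT|_{\AT^1})^{-1}\circ\varphi\circ(g|_{A^1})$ on the full-measure set $A^1$.

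With that in hand I would compute the pushforward on an arbitrary Borel set $E\subset\AT^1$, using the injectivity of $\gT$ on $\AT^1$, the hypothesis $\varphi_{\ast}\omega=\tilde\omega$, and the fact that $A^1$ is of full measure:
\[
m\big((\varphi^{\circ})^{-1}(E)\big)=m\big(g^{-1}(\varphi^{-1}(\gT(E)))\big)=\omega\big(\varphi^{-1}(\gT(E))\big)=(\varphi_{\ast}\omega)(\gT(E))=\tilde\omega(\gT(E))=m(E),
\]
where the last equality holds because $\gT^{-1}(\gT(E))$ differs from $E$ by a subset of the null set $\TT\sm\AT^1$. Since $\AT^1$ has full measure this gives $(\varphi^{\circ})_{\ast}m=m$, and an orientation-preserving circle homeomorphism preserving Lebesgue measure sends every arc to an arc of equal length, hence is a rotation --- which completes the argument. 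The step I expect to be the main obstacle is the measure-theoretic bookkeeping in (ii): one must check that the a priori merely continuous map $\varphi^{\circ}$ genuinely agrees, off a null set, with the measurable composite $(\gT|_{\AT^1})^{-1}\circ\varphi\circ(g|_{A^1})$, and that sets such as $\gT(E)$ and $\varphi^{-1}(\gT(E))$ lie in the relevant $\sigma$-algebras. Both reduce to the analytic-set (Suslin) considerations already used for \lemref{baba}, together with the fact --- just established --- that $\varphi$ respects the decomposition $\bd U=\bd U^1\cup\bd U^2$. Conceptually, the hypothesis $\omega(\bd U^2)=0$ is exactly what is needed: uniaccessibility makes $g$ essentially injective on the circle, which is what converts ``$\varphi$ transports harmonic measure'' into ``$\varphi^{\circ}$ transports Lebesgue measure.''
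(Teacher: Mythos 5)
Your proposal is correct and takes essentially the same route as the paper: part (i) is the identical pushforward computation, and in part (ii) you, like the paper, use \lemref{pe=e} (which indeed does not presuppose fitness) together with the hypothesis to get that $A^1$ and $\AT^1$ have full measure, write $\varphi^{\circ}=\gT^{-1}\circ\varphi\circ g$ on $A^1$, deduce $(\varphi^{\circ})_{\ast}m=m$, and conclude that $\varphi^{\circ}$ is a rotation. The only cosmetic differences are that the paper normalizes $\varphi^{\circ}(1)=1$ at the outset and concludes $\varphi^{\circ}=\mathrm{id}$ directly, whereas you rotate $\gT$ at the end, and that your measurability bookkeeping is spelled out a bit more explicitly than the paper's.
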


The assumption $\omega(\bd U^2)=0$ in part (ii) cannot be dispensed with (see \exref{dom} below). By \eqref{bibi1}, this assumption means that almost every
point of $\bd U$ is uniaccessible. As a
special case, it follows that when $\bd U, \bd\UT$ are Jordan curves, $\varphi: \bd
U \to \bd \UT$ is conformally fit if and only if $\varphi_{\ast} \, \omega =
\tilde{\omega}$.

\begin{proof}
(i) Choose Riemann maps $g:(\DD,0) \to (U,c)$ and $\gT:(\DD,0) \to (\UT,\cT)$
such that $\gT = \varphi \circ g$ on the set of landing angles for $g$, hence almost everywhere. Then,
$$
\varphi_* \, \omega = \varphi_* \, ( g_* \, m ) = (\varphi \circ g)_* \, m = \gT_* \, m = \tilde{\omega}.
$$

(ii) Choose any pair of Riemann maps $g: (\DD,0) \to (U,c)$ and $\gT:
(\DD,0) \to (\UT,\cT)$, so $\gT \circ \varphi^{\circ} = \varphi \circ g$
on the set of landing angles for $g$. Pre-composing either $g$ or $\gT$ by a rigid
rotation, we can arrange $\varphi^{\circ}(1)=1$. By \lemref{pe=e}, $\varphi(\bd U^n)
= \bd \UT^n$ for $n=1,2$. It follows from the assumption $\varphi_{\ast} \, \omega = \tilde{\omega}$ that
$$
m(\AT^2) = \tilde{\omega}(\bd \UT^2) = \omega(\varphi^{-1}(\bd \UT^2)) = \omega(\bd U^2) =0.
$$
By \eqref{bibi2}, both $A^1$ and $\AT^1$ have full measure on the
circle. The radial limit map $\gT$ sends $\AT^1$ measurably and bijectively
onto $\bd \UT^1$, and it has a measurable inverse $\gT^{-1}: \bd \UT^1 \to \AT^1$ which
pushes the harmonic measure $\tilde{\omega}$ forward to Lebesgue measure $m$
on $\AT^1$. The relation $\varphi^{\circ} = \gT^{-1} \circ \varphi \circ g$ on $A^1$, together with $\varphi_{\ast} \, \omega = \tilde{\omega}$, now imply that $\varphi^{\circ}_* \, m = m$. As a homeomorphism which preserves Lebesgue measure, $\varphi^{\circ}$ must be a rigid rotation of the circle. Since $\varphi^{\circ}$ fixes $1$, it must be the identity map.
\end{proof}

\begin{example}
Suppose $f$ is a complex monic polynomial of degree $d \geq 2$ with connected Julia
set $J$. The connected component $U$ of $\Chat \sm J$ containing $\infty$ is a disk with $\bd U =J$; it can be described as the {\bfit basin of infinity} consisting of
all points whose forward orbits under the iterations of $f$ tend to $\infty$. The
unique Riemann map $g:(\Delta,\infty) \to (U,\infty)$ which satisfies
$g'(\infty)>0$ automatically conjugates $f$ to the $d$-th power map:
\begin{equation}\label{conj}
g(z^d) = f(g(z)) \qquad \text{for all} \ z \in \Delta.
\end{equation}
In this dynamical context, $g$ is called the {\bfit B\"{o}ttcher coordinate}
for $f$. It is well-known that the ray $\gamma_g(1)$ always lands at a point $s \in J$ which by \eqref{conj} is a fixed point of $f$. The harmonic measure $\omega$ on $J$ as seen from $\infty$ (also known as the {\bfit Brolin measure} or the {\bfit
measure of maximal entropy}) is characterized as the unique ergodic measure on
$J$ which is {\bfit balanced} in the sense that $\omega(f(X))=d \cdot \omega(X)$
whenever $X \subset J$ is a Borel set on which $f$ is injective. Moreover, polynomial Julia sets have the following special property: If as before $J^2$ denotes the set of biaccessible points in $J$, then $\omega(J^2)=0$ unless $f$ is affinely conjugate to the degree $d$ Chebyshev polynomial for which $J$ is a straight line segment and $\omega(J^2)=1$ (see \cite{Sm}, \cite{Z1}, \cite{Zd}). \vs

Now suppose $f, \tilde{f}$ are two degree $d \geq 2$ non-Chebyshev polynomials with
connected Julia sets $J, \tilde{J}$ and B\"{o}ttcher coordinates $g, \gT$. By
\thmref{HM1}, a homeomorphism $\varphi: J \to \tilde{J}$ is conformally fit if
and only if $\varphi_{\ast} \, \omega = \tilde{\omega}$. This turns out to be
essentially equivalent to the condition that $\varphi$ conjugates $f$ to
$\tilde{f}$. In fact, if $\varphi$ conjugates $f$ to $\tilde{f}$, the probability
measure $\varphi_{\ast} \, \omega$ is trivially ergodic and balanced, so it must
coincide with $\tilde{\omega}$ by uniqueness. Conversely, assume $\varphi$ is
conformally fit and sends the fixed point $s=g(1)$ of $f$ to the fixed point
$\sT = \gT(1)$ of $\tilde{f}$. Choose a Riemann map $h : (\Delta, \infty) \to
(\UT,\infty)$ so the relation $h=\varphi \circ g$ holds on the set of landing angles
for $g$. In particular, $h(1)=\varphi(s)=\sT=\gT(1)$, so by the Schwarz lemma
$h=\gT$. If $p=g(a) \in J$, then
\begin{align*}
\varphi(f(p))  = \varphi(f(g(a))) & = \varphi(g(a^d)) = \gT(a^d) \\
 & = \tilde{f}(\gT(a)) = \tilde{f}(\varphi(g(a))) = \tilde{f}(\varphi(p)).
\end{align*}
In other words, $\varphi$ conjugates $f$ to $\tilde{f}$ on the set of accessible points in
$J$. Since this set is dense, continuity of $\varphi$ shows that $\varphi \circ
f = \tilde{f} \circ \varphi$ everywhere on $J$. Note that the assumption
$\varphi(s)=\sT$ in this argument is essential: The involution
$\varphi(z)=-z$ on the Julia set of any quadratic polynomial $f(z)=z^2+c$ is
conformally fit, but clearly it is not a self-conjugacy of $f$.
\end{example}

\realfig{slit}{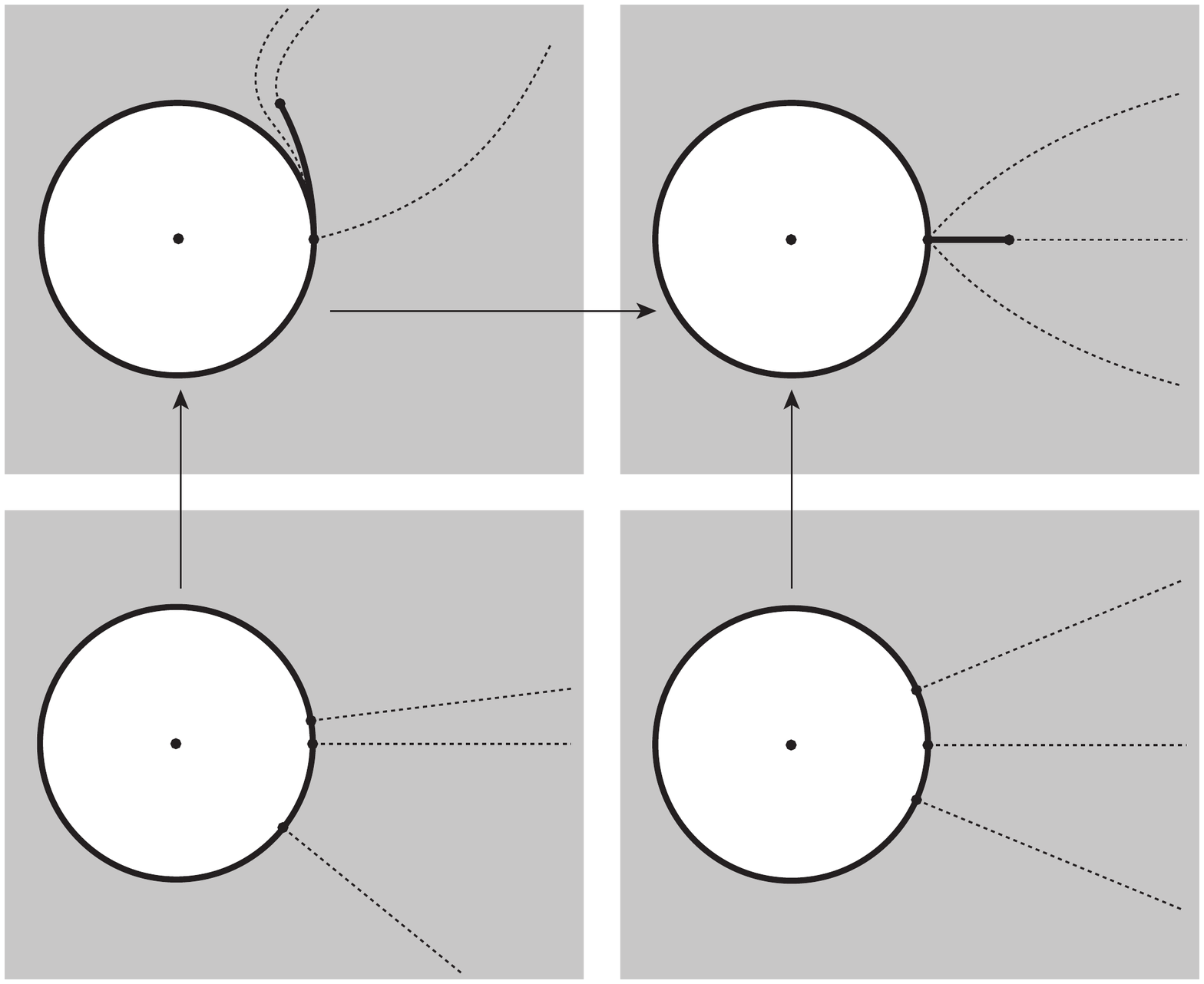}{{\sl Illustration of \exref{dom}: There is a homeomorphism
$\varphi : \bd U \to \bd \UT$ which respects harmonic measures but fails to be
conformally fit. The arcs $\ell, \tilde{\ell}$ have equal harmonic measures in
their respective domains as seen from $\infty$. The two sides of $\tilde{\ell}$ receive the same
harmonic mass because of symmetry, but one side of $\ell$, being more exposed to
rays, receives more harmonic mass than the other side.}
$$
 \at{-4.1\pscm}{11.9\pscm}{$\ell$}
 \at{2.4\pscm}{11.4\pscm}{$\tilde{\ell}$}
 \at{-7.0\pscm}{13.1\pscm}{$U$}
 \at{-0.9\pscm}{13.1\pscm}{$\UT$}
 \at{-7.0\pscm}{8.1\pscm}{$\Delta$}
 \at{-0.9\pscm}{8.1\pscm}{$\Delta$}
 \at{-2.5\pscm}{10.8\pscm}{$\varphi$}
 \at{-5.6\pscm}{5.9\pscm}{$0$}
 \at{0.5\pscm}{5.9\pscm}{$0$}
 \at{-5.6\pscm}{10.9\pscm}{$0$}
 \at{0.5\pscm}{10.9\pscm}{$0$}
 \at{-4.4\pscm}{11.2\pscm}{$1$}
 \at{1.7\pscm}{11.2\pscm}{$1$}
 \at{-4.4\pscm}{6.2\pscm}{$1$}
 \at{1.7\pscm}{6.2\pscm}{$1$}
 \at{-4.35\pscm}{12.7\pscm}{$p$}
 \at{2.7\pscm}{10.9\pscm}{$\pT$}
 \at{-5.3\pscm}{8.75\pscm}{$g$}
 \at{0.8\pscm}{8.75\pscm}{$\tilde{g}$}
 \at{-4.5\pscm}{5.15\pscm}{$a$}
 \at{-4.1\pscm}{6.65\pscm}{$b$}
 \at{1.8\pscm}{5.35\pscm}{$\aT$}
 \at{1.8\pscm}{7.05\pscm}{$\bT$}
$$
}
{12cm}

\begin{example}\label{dom}
Fix $e^{2\pi i \delta} \in \TT$ for some $0<\delta<1/2$, and pick a small $\ve>0$. Let $\ell$ be the shorter of the two closed arcs with endpoints $1$ and $p=(1+\ve)e^{2\pi i \delta}$ on the circle centered on the negative real axis. Consider the disk $U=\Delta \sm \ell$ and the Riemann map $g: (\Delta,\infty) \to (U,\infty)$ normalized so that
$g(1)=p$. Since $\bd U = \TT \cup \ell$ is locally connected, $g$ extends
continuously to a map between the closures, and $g^{-1}(\ell)$ a closed arc $X
\subset \TT$ with endpoints $a,b$ containing $1$ in its interior, labeled as in \figref{slit}. Furthermore, $g$
maps $\TT \sm X$ homeomorphically to $\bd U \sm \ell$, and $X \sm \{ 1 \}$
two-to-one to $\ell \sm \{ p \}$. It is not hard to check that $a \to e^{-2 \pi i \delta}$ and $b \to 1$ as $\ve \to 0$, so the harmonic measure $\omega(\ell)=m(X)$ tends to $\delta$. In particular, taking $\ve >0$ sufficiently small guarantees 
that $b$ is closer to $1$ than $a$ is. We also consider a symmetric
disk $\UT = \Delta \sm \tilde{\ell}$, where $\tilde{\ell}$ is the closed segment
from $1$ to a real point $\tilde{p}>1$, and the Riemann map $\gT: (\Delta,\infty)
\to (\UT,\infty)$ normalized so that $\gT(1)=\tilde{p}$. Since
$\tilde{\omega}(\tilde{\ell})$ increases continuously from $0$ to $1$ as $\tilde{p}$ goes from $1$ to $\infty$, we can choose $\tilde{p}$ so that
$\tilde{\omega}(\tilde{\ell})=\omega(\ell)>0$. Again by local connectivity, $\gT$
extends continuously to a map between the closures, but this time $\gT^{-1}(\tilde{\ell})$ is a closed symmetric arc $\tilde{X} \subset \TT$ with endpoints $\aT,\bT$ having $1$ as its midpoint. As before, $\gT$ maps $\TT \sm \tilde{X}$
homeomorphically to $\bd \UT \sm \tilde{\ell}$, and $\tilde{X} \sm \{ 1 \}$
two-to-one to $\tilde{\ell} \sm \{ \tilde{p} \}$. \vs

Define $\varphi: \bd U \sm \ell \to \bd \UT \sm \tilde{\ell}$ by $\varphi = \gT
\circ g^{-1}$. Let $\ell_x$ (resp. $\tilde{\ell}_x$) denote the closed subarc of
$\ell$ (resp. $\tilde{\ell}$) from $1$ to $x \in \ell$ (resp. $x \in \tilde{\ell}$).
The function $\mu: x \mapsto \omega (\ell_x)$ is continuous on $\ell$ and
monotonically increases from $0$ to $\omega(\ell)$ as $x$ goes from $1$ to $p$.
Similarly, $\tilde{\mu}: x \mapsto \tilde{\omega} (\tilde{\ell}_x)$ is continuous on
$\tilde{\ell}$ and monotonically increases from $0$ to
$\tilde{\omega}(\tilde{\ell})=\omega(\ell)$ as $x$ goes from $1$ to $\tilde{p}$.
Define $\varphi : \ell \to \tilde{\ell}$ by $\varphi = \tilde{\mu}^{-1} \circ \mu$.
The map $\varphi : \bd U \to \bd \UT$ constructed this way is a homeomorphism which
satisfies $\varphi_* \ \omega = \tilde{\omega}$. However, $\varphi$ is not
conformally fit since otherwise $\gT = \varphi \circ g$ would hold everywhere on $\TT$ and in particular $a=\aT,b=\bT$, which is a contradiction.
\end{example}

Thus, respecting the harmonic measure is generally a weaker condition than conformal fitness. To arrive at a measure-theoretic characterization of conformal fitness that applies to arbitrary disks, we need a refinement of the harmonic
measure which takes into account the mass concentrated on different ``sides'' of
biaccessible points of the boundary when approached from within the disk. \vs

It will be convenient to choose coordinates on $\Chat$ so the disk $U$ has its marked center at $\infty$. The boundary $\bd U$ is then a non-empty compact connected subset of the plane. Mark a boundary point $s \in \bd U^1$ and let $g: (\Delta,\infty) \to (U,\infty)$ be the unique Riemann map with $g(1)=s$.
For each $p \in \bd U^2$ let $g^{-1}(p)= \{ a^-, a^+ \}$, where the angles $a^-=a^-(p)$ and
$a^+=a^+(p)$ are labeled so that the interval $[a^-,a^+] \subset \TT$ does not contain $1$, which means the three angles appear in the cyclic order  $1 \to a^- \to a^+$ as we
go counter-clockwise around the circle (see \figref{rays}). \vs

\realfig{rays}{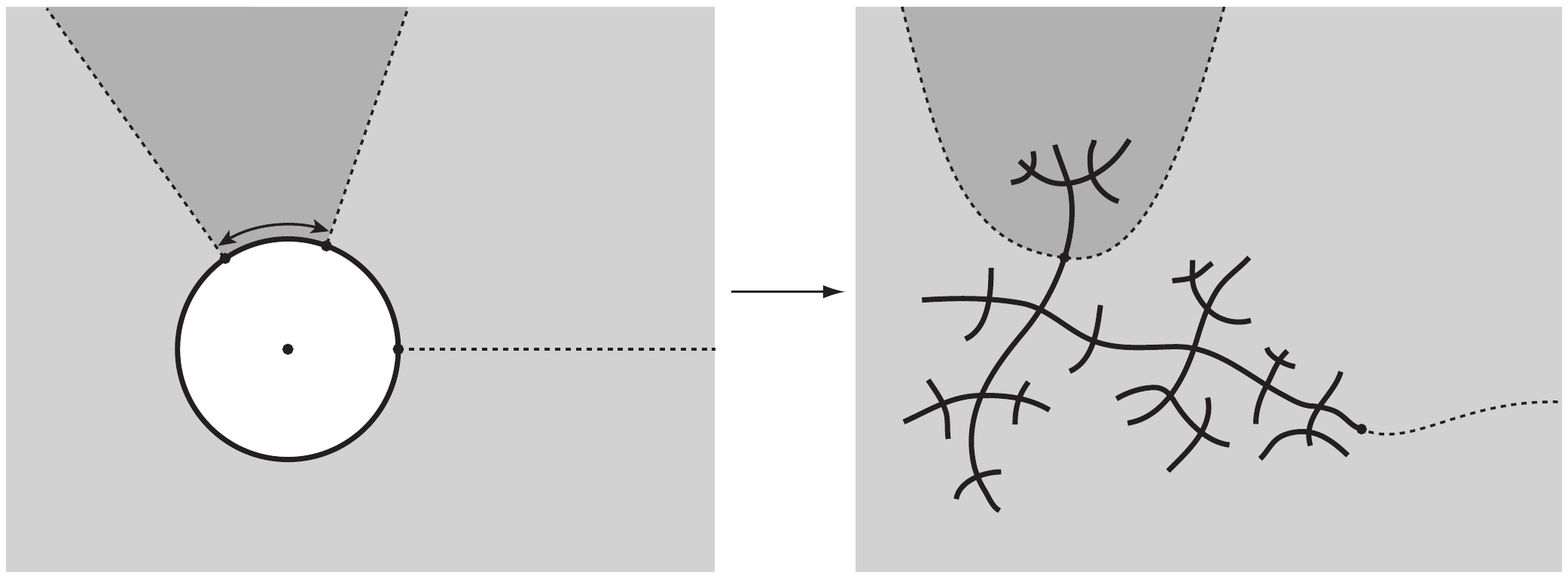}{{\sl Labeling the two preimages of a biaccessible point.}
$$
 \at{-6.1\pscm}{3.25\pscm}{{\footnotesize $0$}}
 \at{-7.1\pscm}{4.35\pscm}{{\footnotesize $a^{\! +}$}}
 \at{-5.5\pscm}{4.65\pscm}{{\footnotesize $a^-$}}
 \at{-2.6\pscm}{6.2\pscm}{$\Delta$}
 \at{-4.9\pscm}{3.25\pscm}{{\small $1$}}
 \at{-1.4\pscm}{4.35\pscm}{$g$}
 \at{5.4\pscm}{6.2\pscm}{$U$}
 \at{1.4\pscm}{4.25\pscm}{{\small $p$}}
 \at{4.2\pscm}{3.0\pscm}{{\small $s$}}
 \at{5.2\pscm}{2.75\pscm}{{\footnotesize $\gamma(1)$}}
 \at{-0.5\pscm}{5.05\pscm}{{\footnotesize $\gamma(a^{\! +})$}}
 \at{2.6\pscm}{5.6\pscm}{{\footnotesize $\gamma(a^-)$}}
$$}
{15cm}

This labeling allows us to decompose the set $A^2=g^{-1}(\bd U^2)$ of biaccessible
angles in $\TT$ into the disjoint subsets
$$
A^{2-} = \{ a^-(p) : p \in \bd U^2 \} \qquad \text{and} \qquad A^{2+} = \{ a^+(p) : p \in \bd U^2 \}.
$$
By \eqref{bibi2},
\begin{equation}\label{decomT}
\TT = A^1 \cup A^{2-} \cup A^{2+} \qquad \text{up to a set of Lebesgue measure zero}.
\end{equation}
This decomposition is not canonical (even modulo a rotation) since it depends on the choice of the marked boundary point $s$. In fact, it is easy to see that there can be no canonical way of distinguishing the two rays landing at a biaccessible point. If we replace $s$ with another marked boundary point $s'$, the senses $\pm$ get reversed precisely on those ray pairs which separate $s$ from $s'$ (compare \figref{biacc}). \vs

To verify measurability of $A^{2 \pm}$, consider the Borel set $P$ defined in (\ref{EE}). Then
$A^{2-}$ and $A^{2+}$ are the projections of the Borel set
$$
P \cap \{ (a,b) \in \TT \times \TT : [1,a] \subset [1,b] \}
$$
onto the first and second coordinates respectively, hence they are measurable.
\vs

The following is a refinement of (part of) \lemref{pe=e}.

\begin{lemma}\label{pe=e1}
Suppose the homeomorphism $\varphi: \bd U \to \bd \UT$ respects the boundary markings so
$\varphi(s)=\sT$, and $g: (\Delta,\infty) \to (U,\infty)$ and $\gT : (\Delta,
\infty) \to (\UT,\infty)$ are the unique Riemann maps normalized by $g(1)=s$ and
$\gT(1)=\tilde{s}$. Then the second commutative diagram in \eqref{twocd} splits into
a pair of commutative diagrams
$$
\begin{CD}
\bd U^2 @> \varphi >> \bd \UT^2 \\
@A g AA  @AA \gT A \\
A^{2\pm} @> \varphi^{\circ} >> \AT^{2\pm}
\end{CD}
$$
In particular, if $p \in \bd U^2$ and $\tilde{p}=\varphi(p)$, then $\varphi^{\circ}$
maps the interval $[a^-(p),a^+(p)]$ homeomorphically onto the interval
$[a^-(\tilde{p}),a^+(\tilde{p})]$.
\end{lemma}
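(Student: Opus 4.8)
The plan is to read the refined diagram off \lemref{pe=e} by an elementary orientation argument, after first isolating two facts: the induced homeomorphism $\varphi^{\circ}$ is orientation-preserving, and it fixes the angle $1$.

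That $\varphi^{\circ}$ is an orientation-preserving homeomorphism of $\TT$ is already recorded in \eqref{PA3}. For the normalization to make sense and for $\varphi^{\circ}(1)=1$, I would first note that $\varphi$ carries $\bd U^n$ onto $\bd \UT^n$ for every $n$: this is exactly the cardinality statement ``$g^{-1}(p)$ and $\gT^{-1}(\varphi(p))$ have the same number of elements'' established in the proof of \lemref{EQ}. In particular $\sT=\varphi(s)\in\bd\UT^1$, so the Riemann map $\gT$ with $\gT(1)=\sT$ exists (and is unique, as asserted in the statement). Since $1$ is a landing angle for $g$ with $g(1)=s$, evaluating the identity $\gT\circ\varphi^{\circ}=\varphi\circ g$ of \lemref{EQ} at $a=1$ gives $\gT(\varphi^{\circ}(1))=\varphi(s)=\sT=\gT(1)$; as $\sT$ is uniaccessible, $\gT^{-1}(\sT)$ is a single point, and therefore $\varphi^{\circ}(1)=1$.

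Now fix $p\in\bd U^2$ and set $\tilde{p}=\varphi(p)\in\bd\UT^2$. \lemref{pe=e} already tells us that $\varphi^{\circ}$ maps $g^{-1}(p)=\{a^-(p),a^+(p)\}$ bijectively onto $\gT^{-1}(\tilde{p})=\{a^-(\tilde{p}),a^+(\tilde{p})\}$, so the only thing left is to decide which preimage goes to which. By the labeling convention, both $(1,a^-(p),a^+(p))$ and $(1,a^-(\tilde{p}),a^+(\tilde{p}))$ occur in counter-clockwise cyclic order on $\TT$. An orientation-preserving homeomorphism of $\TT$ fixing $1$ preserves counter-clockwise cyclic order, so $(1,\varphi^{\circ}(a^-(p)),\varphi^{\circ}(a^+(p)))$ is again in counter-clockwise order; the ``swapped'' assignment $\varphi^{\circ}(a^-(p))=a^+(\tilde{p})$, $\varphi^{\circ}(a^+(p))=a^-(\tilde{p})$ would put it in clockwise order, a contradiction. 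Hence $\varphi^{\circ}(a^-(p))=a^-(\tilde{p})$ and $\varphi^{\circ}(a^+(p))=a^+(\tilde{p})$. Letting $p$ vary over $\bd U^2$ and using that $\varphi:\bd U^2\to\bd\UT^2$ is a bijection, this yields $\varphi^{\circ}(A^{2\pm})=\AT^{2\pm}$, and commutativity of the two displayed squares is just \lemref{pe=e} restricted to $A^{2\pm}$. For the final assertion, $\varphi^{\circ}$ is a homeomorphism of $\TT$ with $\varphi^{\circ}(1)=1$, so it carries the component of $\TT\sm\{a^-(p),a^+(p)\}$ not containing $1$ onto the component of $\TT\sm\{a^-(\tilde{p}),a^+(\tilde{p})\}$ not containing $1$, and the closures of these components are precisely the arcs $[a^-(p),a^+(p)]$ and $[a^-(\tilde{p}),a^+(\tilde{p})]$.

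I do not expect a real obstacle here: essentially all the work is already in \lemref{pe=e}, and what remains is cyclic-order bookkeeping on the circle. The one point that deserves a little care is exactly the bookkeeping that pins down the $\pm$ labels — that $\varphi^{\circ}$ is orientation-preserving and fixes $1$ — which the first two paragraphs secure.
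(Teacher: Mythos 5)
Your proposal is correct and follows essentially the same route as the paper: the paper's proof likewise combines \lemref{pe=e} (which gives $\varphi^{\circ}(\{a^-(p),a^+(p)\})=\{a^-(\tilde{p}),a^+(\tilde{p})\}$) with the fact that $\varphi^{\circ}$ is orientation-preserving and fixes $1$, concluding $\varphi^{\circ}(a^{\pm}(p))=a^{\pm}(\tilde{p})$ by cyclic order. You merely spell out two points the paper leaves implicit — the verification that $\varphi^{\circ}(1)=1$ via uniaccessibility of $\tilde{s}$, and the interval statement via complementary arcs — both of which are handled correctly.
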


\realfig{biacc}{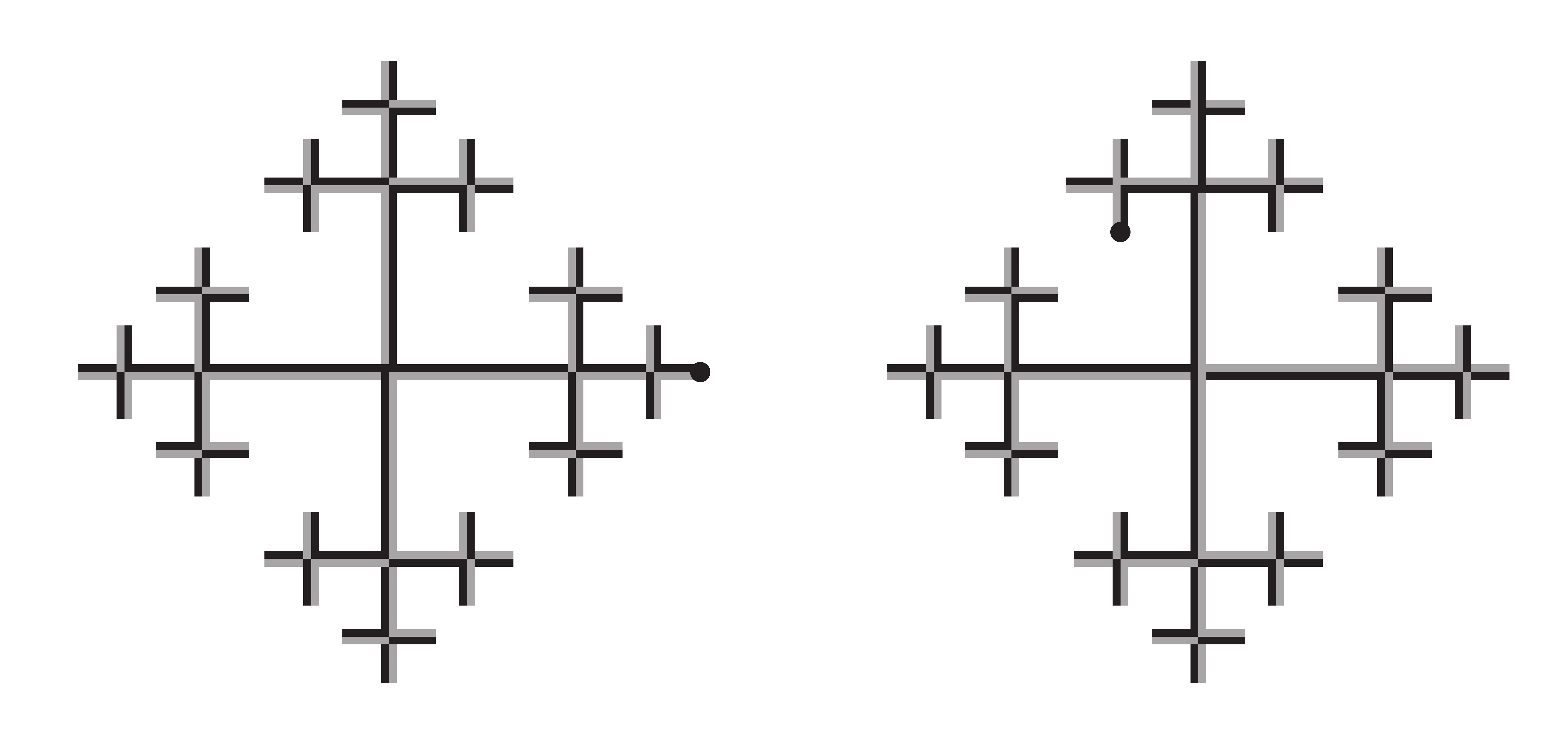}{{\sl The senses $\pm$ for the biaccessible points on the
boundary of a disk centered at $\infty$ with a marked boundary point $s$, and the
effect of changing $s$. The black side corresponds to $-$ and the gray side
corresponds to $+$. Changing the location of $s$ reverses the $\pm$ sense on ray pairs which separate the old and new marked points.}
$$
 \at{-1.9\pscm}{6.75\pscm}{$s$}
 \at{1.9\pscm}{7.9\pscm}{$s$}
$$
}{15cm}

\begin{proof}
If $p \in \bd U^2$, then $\tilde{p}=\varphi(p) \in \bd \UT^2$ by \lemref{pe=e}.
The relation $\gT \circ \varphi^{\circ} = \varphi \circ g$ on $A^2$ shows that
$\varphi^{\circ} (\{ a^-(p) , a^+(p) \}) = \{ a^-(\tilde{p}) , a^+(\tilde{p}) \}$.
Since $\varphi^{\circ}: \TT \to \TT$ is orientation-preserving and fixes $1$, we
must have $\varphi^{\circ} (a^{\pm}(p))=a^{\pm}(\tilde{p})$.
\end{proof}

Take a disk $U$ centered at $\infty$ with a marked boundary point $s \in \bd U^1$, and take the unique Riemann map $g:(\Delta,\infty) \to (U,\infty)$ with $g(1)=s$. Define the measures $\alpha, \beta^{\pm}$ on the $\sigma$-algebra ${\mathscr M}_L$ of \eqref{MM} by
\begin{align}\label{ab}
\alpha(X) & = m(g^{-1}(X) \cap A^1)= \omega(X \cap \bd U^1), \notag \\
\beta^{\pm}(X) & = m(g^{-1}(X) \cap A^{2\pm}).
\end{align}
Note that $\alpha$ is independent of the choice of the marked point $s$, but the measures $\beta^{\pm}$
certainly depend on it. This gives a decomposition of the harmonic measure into the
sum
$$
\omega = \alpha + \beta^- + \beta^+.
$$

The following is a generalization of \thmref{HM1}:

\begin{theorem}[Measure-theoretic characterization of conformal fitness]\label{mtadapt}
Suppose $U, \UT \subset \Chat$ are disks with marked centers at $\infty$ and marked
boundary points $s, \sT$, and $\varphi: \bd U \to \bd \UT$ is a homeomorphism
compatible with the embedding in the sphere such that $\varphi(s)=\sT$. Let $\omega = \alpha + \beta^- + \beta^+$ and $\tilde{\omega} = \tilde{\alpha} + \tilde{\beta}^- + \tilde{\beta}^+$ be the decompositions of the harmonic measures on $\bd U$ and $\bd \UT$ as constructed above. Then the following
conditions are equivalent: \vs
\begin{enumerate}
\item[(i)]
$\varphi$ is conformally fit. \vs
\item[(ii)]
$\varphi_{\ast} \, \alpha = \tilde{\alpha}$ and $\varphi_{\ast} \, \beta^{\pm} = \tilde{\beta}^{\pm}$.
\end{enumerate}
\end{theorem}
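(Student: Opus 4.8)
The plan is to deduce this from \thmref{HM1} by passing to suitable double covers of $U$ and $\UT$ in which the marked biaccessible points become uniaccessible, so that the refined measures $\beta^\pm$ become honest harmonic measures upstairs. Concretely, I would first handle the direction (i) $\Longrightarrow$ (ii), which is the easy half: if $\varphi$ is conformally fit, choose Riemann maps $g:(\Delta,\infty)\to(U,\infty)$ and $\gT:(\Delta,\infty)\to(\UT,\infty)$ with $g(1)=s$, $\gT(1)=\sT$ and $\gT=\varphi\circ g$ on the set of landing angles (using the normalization to fix $\varphi^\circ$ as the identity, which is consistent because $\varphi(s)=\sT$). By \lemref{pe=e1} we then have $A^1=\AT^1$ and $A^{2\pm}=\AT^{2\pm}$ as subsets of $\TT$ up to measure zero. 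Since $\alpha(X)=m(g^{-1}(X)\cap A^1)$ and $\beta^\pm(X)=m(g^{-1}(X)\cap A^{2\pm})$, pushing forward by $\varphi$ and using $\varphi\circ g=\gT$ on landing angles gives $\varphi_\ast\alpha=\tilde\alpha$ and $\varphi_\ast\beta^\pm=\tilde\beta^\pm$ directly, exactly as in the proof of \thmref{HM1}(i).

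For (ii) $\Longrightarrow$ (i), the idea is the Zhukovski\u\i{} double-cover construction referenced in the introduction. Normalize $g,\gT$ by $g(1)=s$, $\gT(1)=\sT$, so that $\varphi^\circ:\TT\to\TT$ is orientation-preserving and fixes $1$, and by \lemref{pe=e}, \lemref{pe=e1} we know $\varphi(\bd U^n)=\bd\UT^n$ for $n=1,2$ and $\varphi^\circ$ maps each interval $[a^-(p),a^+(p)]$ homeomorphically onto $[a^-(\varphi(p)),a^+(\varphi(p))]$. The first step is to build a two-sheeted branched cover $\pi:\hat U\to U$ (a disk, with a chosen center) branched over the center $\infty$, with a deck involution $\tau$; the biaccessible points of $\bd U$ lift to points of $\bd\hat U$, and — this is the whole point — a biaccessible $p$ with its two rays $\gamma(a^-(p))$, $\gamma(a^+(p))$ lifts so that the two preimages are now \emph{uniaccessible} on the two sheets, because the cut separates the sheets. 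Choosing the branch data compatibly on both sides (using that $\varphi$ matches biaccessible structure and respects the $\pm$ labelling via $\varphi^\circ$), one gets $\hat\UT\to\UT$ with involution $\tilde\tau$ and an induced boundary homeomorphism $\hat\varphi:\bd\hat U\to\bd\hat\UT$ compatible with the embedding and equivariant: $\hat\varphi\circ\tau=\tilde\tau\circ\hat\varphi$. The harmonic measure $\hat\omega$ on $\bd\hat U$ as seen from the lifted center satisfies $\pi_\ast\hat\omega=\omega$, and the decomposition $\omega=\alpha+\beta^-+\beta^+$ is precisely the decomposition of $\pi_\ast\hat\omega$ into the parts coming from the two sheets over biaccessible points plus the (doubled) part over uniaccessible points; the hypothesis $\varphi_\ast\alpha=\tilde\alpha$, $\varphi_\ast\beta^\pm=\tilde\beta^\pm$ translates exactly into $\hat\varphi_\ast\hat\omega=\hat{\tilde\omega}$. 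Since $\omega(\bd\hat U^2)=0$ upstairs by construction (the biaccessible points of $\bd U$ became uniaccessible, and no new biaccessible points of positive measure are created — one must check that the lifts of uniaccessible points stay uniaccessible up to measure zero), \thmref{HM1}(ii) applies and gives that $\hat\varphi$ is conformally fit: there is a biholomorphism $\hat\Phi:\hat U\to\hat\UT$ whose radial limit map is $\hat\varphi$. Finally I would push $\hat\Phi$ down: by equivariance one can arrange $\hat\Phi\circ\tau=\tilde\tau\circ\hat\Phi$ (using uniqueness of the conformal map up to rotation and that both $\hat\Phi$ and $\tilde\tau^{-1}\circ\hat\Phi\circ\tau$ induce the same boundary map), hence $\hat\Phi$ descends to a biholomorphism $\Phi:U\to\UT$ whose radial limit map on $\bd U^1\cup\bd U^2$ is $\varphi$, and therefore $\varphi$ is conformally fit by \thmref{adan} together with \lemref{EQ} (the radial limit relation on a full-measure set suffices to conclude $\varphi^\circ=\mathrm{id}$).

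The main obstacle is making the double-cover construction precise and checking its compatibility: one must verify that the branched cover $\hat U\to U$ is genuinely a disk with a well-defined center, that its boundary is described correctly in terms of the prime-end structure of $U$ (so that the notions ``uniaccessible/biaccessible'' transform as claimed), and — most delicately — that the two choices of cover, downstairs on $U$ and on $\UT$, are \emph{matched} by $\varphi$. This matching is where $\varphi(s)=\sT$ and the $\pm$-labelling compatibility from \lemref{pe=e1} are used, but one has to be careful that the cut data (which biaccessible ray pairs are "glued to which sheet") is transported correctly and is independent of the harmless choices involved. A secondary technical point is the measure-zero bookkeeping: one needs $\omega(\bd\hat U^{\geq 3})=0$ and that the lift does not accidentally create a positive-measure set of new biaccessible or higher-accessible points, which should follow from the F.\ and M.\ Riesz theorem applied upstairs exactly as in \lemref{baba}. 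I expect the equivariant descent of $\hat\Phi$ and the verification $\pi_\ast\hat\omega=\omega$ with the correct splitting into $\alpha,\beta^\pm$ to be routine once the cover is set up; the setup itself, deferred in the paper to \S\ref{sec:zdc}, is the real content.
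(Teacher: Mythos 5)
Your (i) $\Longrightarrow$ (ii) argument is fine and is essentially the paper's: normalize $g,\gT$ by $g(1)=s$, $\gT(1)=\sT$, use \lemref{pe=e1} to get $A^1=\AT^1$, $A^{2\pm}=\AT^{2\pm}$, and push forward. The problem is the converse, where your double-cover reduction to \thmref{HM1}(ii) breaks down at its central claim. A degree-two cover of the sphere branched over the center $\infty$ (and necessarily over one more point, which in general cannot be placed off $\ov U$ -- for a slit domain $\ov U=\Chat$, so the second branch point must lie on $\bd U$) does \emph{not} render biaccessible points uniaccessible. Rays of $\hat U$ are exactly the lifts of rays of $U$, and if $p\in\bd U^2$ is a biaccessible point (away from the branch values), an evenly covered neighborhood of $p$ shows that each of the two preimages of $p$ receives exactly one lift of $\gamma(a^-(p))$ and one lift of $\gamma(a^+(p))$: branching at the center merely doubles the prime-end circle, preserving the number of rays landing at each boundary preimage. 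What does split biaccessible points is branching at two \emph{boundary} points (the Zhukovski\u{\i} construction of \S\ref{sec:zdc}), and even then only those biaccessible points whose ray pair separates the two branch values become pairs of uniaccessible points; the rest stay biaccessible. So no single cover reduces the general static case to the hypothesis $\omega(\bd U^2)=0$ of \thmref{HM1}(ii), and your assertion that ``the cut separates the sheets'' is where the proof fails.

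There is a second, independent obstruction that the paper flags explicitly at the end of \S\ref{sec:zdc}: even when such a lift exists, the lift of a homeomorphism respecting harmonic measure may fail to respect harmonic measure upstairs, so the claimed translation of the hypotheses into $\hat\varphi_\ast\hat\omega=\hat{\tilde\omega}$ is not automatic and would itself need the full strength of the refined data; in the Main Theorem this difficulty is overcome only by using the energy/conformal-radius characterization (harmonicity in $t$), which has no analogue in the static setting. The paper's actual proof of (ii) $\Longrightarrow$ (i) uses no covers at all: with the normalized maps one fixes $a\in\TT$, decomposes $X=g([1,a])$ into $X_1=X\cap\bd U^1$, the biaccessible points with both angles in $[1,a]$, and those with only $a^-$ in $[1,a]$ (measurability via the Borel set $P$ of \eqref{EE}), applies $\varphi_\ast\alpha=\tilde\alpha$ and $\varphi_\ast\beta^\pm=\tilde\beta^\pm$ to the corresponding interval pieces, and adds up to get $m([1,\varphi^\circ(a)])=m([1,a])$, whence $\varphi^\circ(a)=a$ for every $a$ and $\varphi^\circ=\mathrm{id}$. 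If you want to salvage a cover-based argument you would have to branch at boundary points, vary the second branch point, and control harmonic measure upstairs -- which is essentially the (harder) mechanism of the proof of (v) $\Longrightarrow$ (iii) in the moving case, not a shortcut for this theorem.
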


\begin{proof}
(i) $\Longrightarrow$ (ii): Take the Riemann maps $g:(\Delta,\infty) \to (U,
\infty)$ and $\gT:(\Delta,\infty) \to (\UT,\infty)$ with $g(1)=s$ and $\gT(1)=\sT$. By conformal fitness, $\gT = \varphi \circ g$ on the set of landing angles for $g$ and in particular on $A^1 \cup A^{2-} \cup A^{2+}$. By \lemref{pe=e1}, $A^1=\AT^1$ and $A^{2\pm}=\AT^{2\pm}$. Hence, for every set $X \subset \bd \UT$ in the $\sigma$-algebra $\tilde{\mathscr M}_L$,
\begin{align*}
(\varphi_{\ast} \, \alpha)(X) = \alpha (\varphi^{-1}(X)) & = m( g^{-1}(\varphi^{-1}(X)) \cap A^1 ) \\
& = m (\gT^{-1}(X) \cap \AT^1) = \tilde{\alpha}(X).
\end{align*}
One verifies $\varphi_{\ast} \, \beta^{\pm} = \tilde{\beta}^{\pm}$ similarly. \vs

(ii) $\Longrightarrow$ (i): Let $g, \gT$ be as above. By \lemref{EQ} and
$\gT(1)=\varphi(g(1))$, the induced homeomorphism $\varphi^{\circ} = [\gT]^{-1}
\circ [\varphi] \circ [g] : \TT \to \TT$ fixes $1$. Take any $a \in \TT \sm \{ 1 \}$
and let $X = g([1,a]) \subset \bd U$. If $p \in X \cap \bd U^2$ and $a^+(p) \in
[1,a]$, then necessarily $a^-(p) \in [1,a]$. Thus, up to a
set of harmonic measure zero, $X$ decomposes into the sets
\begin{align*}
X_1 & = X \cap \bd U^1 \\
X_2 & = \{ p \in X \cap \bd U^2 : \ \text{both} \ a^{\pm}(p) \ \text{are in} \ [1,a] \} \\
X_3 & = \{ p \in X \cap \bd U^2 : \ \text{only} \ a^-(p) \ \text{is in} \ [1,a] \}.
\end{align*}
We have $X_1 \in {\mathscr M}_L$ since $g^{-1}(X_1)=[1,a] \cap A^1$ is measurable.
Also, $X_2 \in {\mathscr M}_L$ since $g^{-1}(X_2)$ is the projection onto the first
coordinate of the Borel set $P \cap ([1,a]\times[1,a])$, where $P$ is defined in \eqref{EE}.
Finally, $X_3 \in {\mathscr M}_L$ since $g^{-1}(X_3)$ is the union of the
projections onto the first and second coordinates of the Borel set $P \cap ([1,a]
\times (a , 1))$. \vs

This gives a measurable decomposition $[1,a] = I_1 \cup I^-_2 \cup I^+_2 \cup I_3$, up to a set of Lebesgue measure zero, where
\begin{align*}
I_1 & = g^{-1}(X_1) = [1,a] \cap A^1 \\
I^-_2 & = g^{-1}(X_2) \cap A^{2-} \\
I^+_2 & = g^{-1}(X_2) \cap A^{2+} \\
I_3 & = g^{-1}(X_3) \cap A^{2-}.
\end{align*}
Setting $\XT = \varphi(X) = \gT([1,\varphi^{\circ}(a)]) \subset \bd \UT$, we arrive at similar decompositions
$\XT = \XT_1 \cup \XT_2 \cup \XT_3$ and $[1,\varphi^{\circ}(a)] = \IT_1 \cup \IT^-_2
\cup \IT^+_2 \cup \IT_3$ up to sets of measure zero. Note that by \lemref{pe=e}
and \lemref{pe=e1}, $\varphi(X_n)=\XT_n$ for $n=1,2,3$. The assumptions
$\varphi_{\ast} \, \alpha = \tilde{\alpha}$ and $\varphi_{\ast} \, \beta^{\pm} =
\tilde{\beta}^{\pm}$ now show that
\begin{align*}
m(\IT_1) & = \tilde{\alpha}(\XT_1) = \alpha(X_1) = m(I_1) \\
m(\IT_2^{\pm}) & = \tilde{\beta}^{\pm}(\XT_2) = \beta^{\pm}(X_2) =
m(I_2^{\pm}) \\
m(\IT_3) & = \tilde{\beta}^- (\XT_3) = \beta^-(X_3) = m(I_3).
\end{align*}
Adding up these equalities gives $m([1,\varphi^{\circ}(a)])=m([1,a])$. Since
$\varphi^{\circ}(1)=1$, we must have $\varphi^{\circ}(a)=a$. Since $a$ was
arbitrary, we conclude that $\varphi^{\circ}=\text{id}$.
\end{proof}

\section{The Zhukovski\u{i} lift} \label{sec:zdc}

This section describes a topological procedure that turns some
biaccessible points on the boundary of a disk into uniaccessible points. It will be used
in the next section where we study conformal fitness in a holomorphically
moving family of disks. Consider the {\bfit Zhukovski\u{i} map} $Z: \Chat \to \Chat$ defined by 
$$
Z(\zeta) = \zeta + \frac{1}{\zeta}.
$$
This is a degree $2$ rational map of the sphere with critical points at $\pm 1$
and critical values at $Z(\pm 1)=\pm 2$. It is easily checked that $Z$ maps the
disks $\DD$ and $\Delta=\Chat \sm \ov{\DD}$ biholomorphically onto the slit-sphere $\Chat
\sm [-2,2]$, with $Z(0)=Z(\infty)=\infty$, $Z'(\infty)=1$. Note also the symmetry
$Z(\zeta)=Z(1/\zeta)$. \vs

Suppose $U$ is a disk with the marked center at $\infty$ such that $\pm 2 \in
\bd U$. Since $U$ is simply connected and does not contain the critical values of
$Z$, it lifts under $Z$ to a disk $V$ centered at $\infty$ and the map $Z: V \to U$
is a biholomorphism (the other lift of $U$ is the image of $V$ under the
inversion $\zeta \mapsto 1/\zeta$). Evidently, $Z$ extends to a continuous surjection $\ov{V} \to \ov{U}$. We call $V$ the {\bfit
Zhukovski\u{i} preimage} of $U$. Note that since $Z'(\infty)=1$, this process does not
alter the conformal radius:
\begin{equation}\label{r=r}
\rad(V,\infty)= \rad(U,\infty).
\end{equation}

Assuming further that $2 \in \bd U^1$, we can designate it as the marked boundary
point of $U$. Let $g:(\Delta,\infty) \to (U, \infty)$ be the unique Riemann map with
$g(1)=2$. The composition $h=Z^{-1}\circ g:(\Delta,\infty) \to (V,\infty)$ is then
the Riemann map for $V$ which satisfies $h(1)=1$. The relation $g=Z \circ h$ in $\Delta$
persists under radial limits, hence $g(a)$ exists for some $a \in \TT$ if and only
if $h(a)$ exists, and $g(a)=Z(h(a))$. In other words, the ray $\gamma_g(a) \subset
U$ lifts under $Z$ to the ray $\gamma_h(a) \subset V$, hence $\gamma_g(a)$ lands at $p
\in \bd U$ if and only if $\gamma_h(a)$ lands at some $q \in \bd V \cap Z^{-1}(p)$.
It follows in particular that $1 \in \bd V^1$, so we can designate it as the marked
boundary point of $V$. With these boundary markings on $\bd U$ and $\bd V$, we can speak of the senses $\pm$ on the set of
biaccessible angles for $g$ and $h$, as described in \S \ref{sec:meas}. \vs

Let $p=g(a^-)=g(a^+)$ be a biaccessible point on $\bd U$. It will be convenient to say that the {\bfit ray pair} $\gamma_g(a^\pm)$ {\bfit separates} $z$ {\bfit from} $w$ if $z$ and $w$ belong to different connected components of $\CC \sm (\gamma_g(a^-) \cup \gamma_g(a^+) \cup \{ p \})$.

\begin{lemma}[Lifting accessible points]
\mbox{}
\begin{enumerate}
\item[(i)]
Suppose the ray $\gamma_g(a)$ lands at a uniaccessible point $p \in \bd U$. Then the lifted ray $\gamma_h(a)$ lands at a uniaccessible point which is the unique $Z$-preimage of $p$ on $\bd V$. \vs
\item[(ii)]
Suppose the rays $\gamma_g(a^-), \gamma_g(a^+)$ land at a biaccessible point $p \in \bd U \sm \{ 2,-2 \}$. If the ray pair $\gamma_g(a^\pm)$ does not separate $-2$ from $2$, the lifted rays $\gamma_h(a^-), \gamma_h(a^+)$ land at a biaccessible point which is the unique $Z$-preimage of $p$ on $\bd V$. If the ray pair $\gamma_g(a^\pm)$ separates $-2$ from $2$, the lifted rays $\gamma_h(a^-), \gamma_h(a^+)$ land at a pair of uniaccessible points which are the distinct $Z$-preimages of $p$ on $\bd V$.
\end{enumerate}
\end{lemma}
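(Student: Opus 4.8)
The plan is to reduce both assertions to the monodromy of the degree-$2$ branched cover $Z$ around its critical values $\pm 2$, via the relation $g=Z\circ h$ and the fact --- already noted --- that it survives passage to radial limits. The one geometric input I will need is the following: \emph{a Jordan curve $\Gamma\subset\Chat$ avoiding $\pm 2$ has connected preimage $Z^{-1}(\Gamma)$ if and only if $\Gamma$ separates $2$ from $-2$}. Indeed, $Z:\Chat\sm\{\pm 1\}\to\Chat\sm\{\pm 2\}$ is the connected double cover, corresponding to the index-two subgroup of $\pi_1(\Chat\sm\{\pm 2\})\cong\ZZ$ (from the local form $Z(\zeta)-2=(\zeta-1)^2/\zeta$ near the critical point $1$ and its counterpart at $-1$, a small loop around $2$ lifts to a sheet-interchanging arc); and a Jordan curve avoiding $\pm 2$ is null-homotopic in $\Chat\sm\{\pm 2\}$ when it fails to separate $\pm 2$ (then it bounds a puncture-free disk) and represents a generator otherwise.

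The common bookkeeping: for $b\in\TT$, the ray $\gamma_g(b)$ lands at $w\in\bd U$ iff $\gamma_h(b)$ lands at some $v\in\bd V$ with $Z(v)=w$, and then $v=h(b)$. Hence the rays of $V$ lying over a given $w$ are exactly the $\gamma_h(b)$ with $b\in g^{-1}(w)$, with two of them landing at the same point of $\bd V$ iff the angles coincide; in particular no point of $\bd V$ other than the points $h(b)$, $b\in g^{-1}(w)$, is landed at by any ray over $w$, which is why ``the unique $Z$-preimage of $p$ on $\bd V$'' is to be understood as the one accessed by a ray. For (i): since $g^{-1}(p)=\{a\}$, the point $q:=h(a)$ is the only $Z$-preimage of $p$ landed at by a ray of $V$, and any $\gamma_h(b)$ landing at $q$ has $b\in g^{-1}(p)=\{a\}$, so $q\in\bd V^1$. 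When $p\ne\pm 2$ the arc $\ov{\gamma_g(a)}$ avoids the critical values, so $Z^{-1}(\ov{\gamma_g(a)})=\ov{\gamma_h(a)}\sqcup\iota(\ov{\gamma_h(a)})$ with $\iota(\zeta)=1/\zeta$ the deck involution, displaying both $Z$-preimages of $p$ and $q$ as the one met from $V$; when $p\in\{\pm 2\}$ the fiber $Z^{-1}(p)$ is a single critical point, landed at by $\gamma_h(a)$ (the case $p=2$ being already in the text).

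For (ii), put $\Gamma=\ov{\gamma_g(a^-)}\cup\ov{\gamma_g(a^+)}$: since distinct rays are disjoint, this is a Jordan curve through $\infty$ and $p$ avoiding $\pm 2$ (here $p\ne\pm 2$ is used), and $Z^{-1}(\Gamma)$ is the union of the closures of the four arcs $\gamma_h(a^-),\gamma_h(a^+)$ and their $\iota$-images, the first two issuing from $\infty$ and the other two from $0$. If $\Gamma$ does not separate $2$ from $-2$, then $Z^{-1}(\Gamma)$ is two disjoint circles swapped by $\iota$; the one through $\infty$ contains all of $\ov{\gamma_h(a^-)}\cup\ov{\gamma_h(a^+)}$ and, being mapped homeomorphically onto $\Gamma$ by $Z$, equals that union, which forces $h(a^-)=h(a^+)=:q$ --- a biaccessible $Z$-preimage of $p$ met from $V$. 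If $\Gamma$ does separate $2$ from $-2$, then $Z^{-1}(\Gamma)$ is a single circle $C$; were $h(a^-)=h(a^+)$ then $\ov{\gamma_h(a^-)}\cup\ov{\gamma_h(a^+)}$ would be a closed subcurve of $C$, hence all of $C$, yet $C$ also contains $\iota(\ov{\gamma_h(a^-)})\ni 0$ while $0\notin\ov{\gamma_h(a^\pm)}$ (because $Z(0)=\infty\ne p$) --- a contradiction; thus $h(a^-)\ne h(a^+)$, so $h(a^+)=\iota(h(a^-))$, the set $\{h(a^-),h(a^+)\}$ is the full fiber $Z^{-1}(p)\subset\bd V$, and each of its points is landed at by exactly one ray, hence uniaccessible. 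The crux, I expect, is the monodromy fact of the first paragraph together with this combinatorial reading of how the lifted arcs close up on $Z^{-1}(\Gamma)$; everything else is formal, following from $g=Z\circ h$ and from radial limits being respected by the continuous map $Z$.
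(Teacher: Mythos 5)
Your treatment of the landing behavior itself is correct and close in spirit to the paper's: the paper also argues by lifting through the degree two covering, but where you take the Jordan curve $\Gamma=\ov{\gamma_g(a^-)}\cup\ov{\gamma_g(a^+)}$ and use connectedness of $Z^{-1}(\Gamma)$ as the monodromy criterion, the paper chooses an auxiliary arc $\eta$ from $-2$ to $2$ (disjoint from, resp.\ forced to pass through, the ray pair) and lifts the slit sphere $\Chat\sm\eta$ to a disk $D$ on which $Z$ is a biholomorphism. The two devices are interchangeable, and your handling of the separating case of (ii) --- including the argument that $h(a^-)\neq h(a^+)$ because $0\in\iota(\ov{\gamma_h(a^-)})\subset C$ while $0\notin\ov{\gamma_h(a^\pm)}$ --- is complete.

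The genuine gap is the clause you explicitly define away. In (i), and in the non-separating case of (ii), the lemma asserts that the landing point is the unique $Z$-preimage of $p$ \emph{on $\bd V$}, i.e.\ that $Z^{-1}(p)\cap\bd V$ is a singleton; your "bookkeeping" paragraph only controls which preimages are reached by lifted rays, and then you announce that the uniqueness clause "is to be understood as the one accessed by a ray." That is a weakening of the statement, not a proof of it: ray bookkeeping cannot exclude that the second preimage $1/q$ lies on $\bd V$ as a boundary point reached by no ray, because which sheet nearby points of $U$ lift into is governed by monodromy, not by the accessibility of $p$. The paper closes exactly this point: assuming $p\neq\pm 2$, if $1/q\in\bd V$ then, $Z$ being a local biholomorphism near $1/q$, the point $1/q$ is accessible from $V$, hence by Lindel\"of's theorem $1/q=h(b)$ for some $b$, and applying $Z$ yields an extra ray of $U$ landing at $p$, contradicting the accessibility count. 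This is the genuinely delicate step of the lemma (one must rule out points of $V$ accumulating at $1/q$ from components of $U$ near $p$ through which $p$ itself is not accessible), so it cannot be absorbed into a silent reinterpretation. If you believe only the weaker "unique preimage accessed from $V$" is needed, you must argue that explicitly and check it against the later uses (\corref{joojoo} and the step (v) $\Longrightarrow$ (iii) of the Main Theorem); as written, your proposal proves a different statement from the one in the lemma.
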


\figref{doublecover} illustrates this lemma.

\realfig{doublecover}{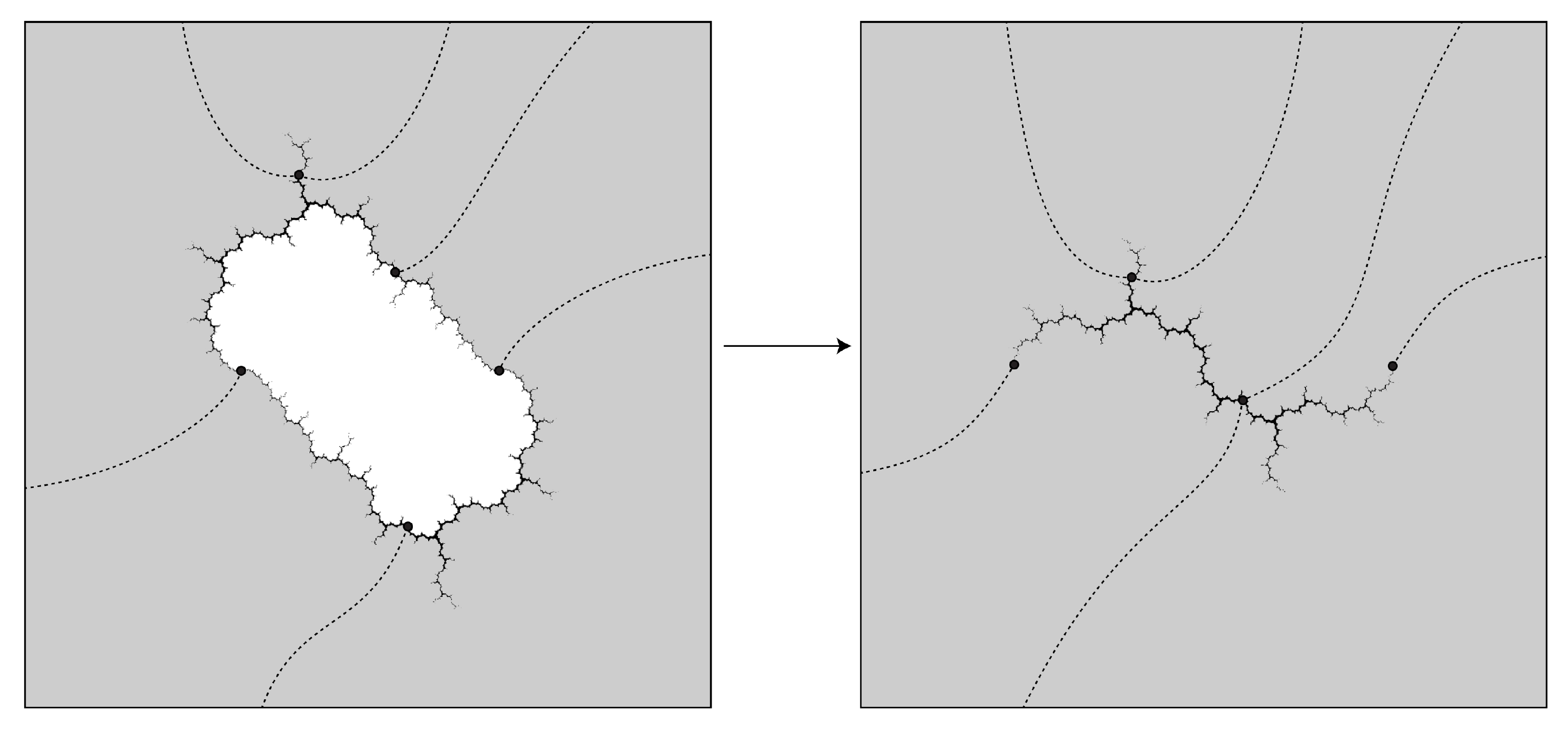}{{\sl A disk $U$ centered at $\infty$ and its
Zhukovski\u{i} preimage $V$. Each uniaccessible point on $\bd U$ (such as $2$ or $-2$
here) lifts to a unique uniaccessible point on $\bd V$. But a biaccessible point on
$\bd U$ lifts to a pair of uniaccessible points or a unique biaccessible point on
$\bd V$ according as the corresponding ray pair is or is not separating $-2$ from
$2$. (For simplicity, only the angles of rays with respect to the normalized Riemann
maps are shown in the figure.)}
$$
 \at{-0.3\pscm}{10.7\pscm}{$U$}
 \at{-8.3\pscm}{10.7\pscm}{$V$}
 \at{4.7\pscm}{7.9\pscm}{{\footnotesize $2$}}
 \at{0.4\pscm}{7.9\pscm}{{\footnotesize $-2$}}
 \at{-4.1\pscm}{7.6\pscm}{{\footnotesize $1$}}
 \at{-6.6\pscm}{8.0\pscm}{{\footnotesize $-1$}}
 \at{-1.4\pscm}{8.3\pscm}{$Z$}
 \at{5.2\pscm}{11.0\pscm}{{\footnotesize $a^-$}}
 \at{0.7\pscm}{4.8\pscm}{{\footnotesize$a^+$}}
 \at{3.2\pscm}{10.9\pscm}{{\footnotesize $b^-$}}
 \at{\pscm}{10.9\pscm}{{\footnotesize$b^+$}}
 \at{5.8\pscm}{9.1\pscm}{{\footnotesize$1$}}
 \at{-0.4\pscm}{6.7\pscm}{{\footnotesize$c$}}
 \at{-3.2\pscm}{10.9\pscm}{{\footnotesize $a^-$}}
 \at{-6.6\pscm}{4.8\pscm}{{\footnotesize$a^+$}}
 \at{-5.0\pscm}{10.9\pscm}{{\footnotesize $b^-$}}
 \at{-6.8\pscm}{10.9\pscm}{{\footnotesize$b^+$}}
 \at{-2.2\pscm}{9.15\pscm}{{\footnotesize$1$}}
 \at{-8.4\pscm}{6.6\pscm}{{\footnotesize$c$}}
$$
}{15cm}

\begin{proof}
(i) We have $p=g(a) \in \bd U^1$. Let $q=h(a)$ so $q \in \bd V$ and $Z(q)=p$. If $q=h(b)$
for some $b \neq a$, then $g(b)=Z(h(b))=Z(q)=p$, which contradicts $p$ being
uniaccessible. Thus, $q \in \bd V^1$. To prove $Z^{-1}(p) \cap \bd V = \{ q \}$, we may assume $p \neq \pm 2$ so $Z^{-1}(p)$ consists of distinct points $q, 1/q$, each having a small neighborhood which maps biholomorphically under $Z$ to a small
neighborhood of $p$. If $1/q \in \bd V$ also, it follows that $1/q$ is
accessible, so by Lindel\"{o}f's theorem $1/q=h(b)$ for some $b \neq a$. Applying
$Z$ gives $g(b)=p$, which once again contradicts $p$ being uniaccessible. \vs

(ii) If $p=g(a^-)=g(a^+) \in \bd U^2$, then $h(a^-)$ and $h(a^+)$ both belong to $\bd
V$ and map under $Z$ to $p$. First suppose the ray pair $\gamma_g(a^\pm)$ does not separate $-2$ from $2$ and choose an embedded arc $\eta$ from $-2$ to $2$ which avoids the union $\gamma_g(a^-) \cup \gamma_g(a^+)
\cup \{ p \}$ entirely. The disk $\Chat \sm \eta$ lifts under $Z$ to a disk $D$
containing $\infty$, so the map $Z: D \to \Chat \sm \eta$ is a biholomorphism.
Evidently, the preimages of $\gamma_g(a^{\pm})$ under this biholomorphism
are the rays $\gamma_h(a^{\pm})$, which in particular shows that the latter rays have a common landing point. It follows from a similar argument as above that this common landing point is in $\bd V^2$ and forms $Z^{-1}(p) \cap \bd V$. Next, suppose the ray pair $\gamma_g(a^\pm)$ separates $-2$ from $2$. Then every embedded arc from $-2$ to $2$ which avoids the rays $\gamma_g(a^{\pm})$ must pass through $p$. Let $\eta$ be such an arc and define $D$ as before and note that $D$ is bounded by the
Jordan curve $Z^{-1}(\eta)$ passing through $\pm 1$. Each of the two connected components of $\CC \sm (\gamma_g(a^-) \cup \gamma_g(a^+) \cup \eta)$ has either $2$ or $-2$ on its boundary. Taking preimages under the biholomorphism $Z: D \to \Chat \sm \eta$, we see that each of the two connected components of $(D \sm \infty) \sm (\gamma_h(a^-) \cup \gamma_h(a^+))$ has either $1$ or $-1$ on its boundary.  It easily follows that the rays $\gamma_h(a^{\pm})$ must land at distinct $Z$-preimages of $p$. These points are uniaccessible since otherwise $p$ would be the landing point of $3$ or more rays.
\end{proof}

\begin{corollary}\label{joojoo}
Let $V$ be the Zhukovski\u{i} preimage of a disk centered at $\infty$. Suppose $q \in \bd V \sm \{ -1, 1 \}$ is biaccessible and the rays $\gamma_h(a^\pm)$ land at $q$. Then the ray pair $\gamma_h(a^\pm)$ does not separate $-1$ from $1$.
\end{corollary}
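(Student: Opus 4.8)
The plan is to argue by contradiction, exploiting the involution $\iota:\zeta\mapsto 1/\zeta$. Recall that $\iota$ is a M\"obius map whose fixed points are exactly $-1$ and $1$, and that $V$ and $\iota(V)$ are the two lifts of $U$ under $Z$; since $\pm 2\in\bd U$ means $U$ contains no critical value of $Z$, these two lifts are disjoint. Suppose, contrary to the assertion, that the ray pair $\gamma_h(a^\pm)$ separates $-1$ from $1$. Since the rays $\gamma_h(a^-)$ and $\gamma_h(a^+)$ are disjoint and each has $\infty\in V$ and $q\in\bd V$ as the two endpoints of its closure, the union
$$
J=\gamma_h(a^-)\cup\gamma_h(a^+)\cup\{q,\infty\}
$$
is a Jordan curve, and the supposition says exactly that $-1$ and $1$ lie in different connected components of $\Chat\sm J=\CC\sm(\gamma_h(a^-)\cup\gamma_h(a^+)\cup\{q\})$. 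Note $J\sm\{q\}\subset V$, so $J\cap\bd V=\{q\}$; moreover $\pm 1\notin V$ (their $Z$-images $\pm 2$ lie on $\bd U$, not in $U$) and $\pm 1\neq q$, so $\pm 1\notin J$, and similarly $\pm 1\notin\iota(J)$ (otherwise $\pm 1=\iota(\pm 1)\in\iota(\iota(J))=J$).

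The crucial point is that $J$ and $\iota(J)$ are \emph{disjoint} Jordan curves. Indeed $J\subset\ov V$ and $\iota(J)\subset\ov{\iota(V)}$, and since $V$ and $\iota(V)$ are disjoint open sets a routine check gives $\ov V\cap\ov{\iota(V)}=\bd V\cap\bd\iota(V)$. Hence any point of $J\cap\iota(J)$ lies in $J\cap\bd V=\{q\}$ and simultaneously in $\iota(J)\cap\bd\iota(V)=\iota(J\cap\bd V)=\{\iota(q)\}$; since $q\neq\iota(q)$ (because $q\neq\pm 1$), we conclude $J\cap\iota(J)=\es$.

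Now the argument finishes by planar topology. Two disjoint Jordan curves on the sphere cut it into exactly three regions: a disk $D_1$ bounded by $J$, a disk $D_2$ bounded by $\iota(J)$, and an annulus $W$ with $\bd W=J\cup\iota(J)$. The homeomorphism $\iota$ permutes these three regions; as $\iota(D_1)$ has boundary $\iota(J)\neq J$, it must equal $D_2$, so $\iota$ interchanges $D_1$ and $D_2$ and hence maps $W$ onto itself. Therefore the fixed points $-1$ and $1$ of $\iota$ lie neither in $D_1$, nor in $D_2$, nor on $J\cup\iota(J)$, so both lie in $W$. But $W$ is connected and disjoint from $J$, hence contained in a single connected component of $\Chat\sm J$ — contradicting the supposition that $-1$ and $1$ lie in different components. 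This contradiction proves the corollary.

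The step I expect to require the most care is the disjointness $J\cap\iota(J)=\es$: one must verify that the closures of the two lifts $V$ and $\iota(V)$ meet only along their common boundary, and that $J$ and $\iota(J)$ reach that common set only at the single points $q$ and $\iota(q)$, which are distinct precisely because $q\neq\pm 1$. After that, everything is the elementary fact that two disjoint circles on the sphere bound an annulus and two disks, together with the symmetry of $\iota$; and everything preceding it is routine bookkeeping about rays.
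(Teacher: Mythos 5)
Your argument is correct, but it proves the corollary by a genuinely different route than the paper intends. The paper reads the statement off the Lifting lemma: pushing the rays down by $Z$, the ray pair $\gamma_g(a^\pm)$ landing at $Z(q)$ cannot separate $-2$ from $2$ (otherwise the lifted rays would land at two distinct points rather than both at $q$), and in the non-separating case the lemma's proof itself --- pulling back an arc $\eta$ from $-2$ to $2$ disjoint from the ray pair, so that $\gamma_h(a^-)\cup\gamma_h(a^+)\cup\{q\}$ lies in the lifted disk $D$ while $\pm 1$ lie in its connected complement --- is what shows the lifted pair does not separate $-1$ from $1$. You instead work entirely upstairs with the deck transformation $\iota:\zeta\mapsto 1/\zeta$: from $J\subset\ov{V}$, $\iota(J)\subset\ov{\iota(V)}$, $V\cap\iota(V)=\es$, $J\cap\bd V=\{q\}$ and $q\neq\iota(q)$ you get two disjoint Jordan curves, and the standard three-region decomposition of the sphere together with the fact that $\iota$ must swap the two disk regions forces the fixed points $\pm 1$ into the annular region, hence into a single component of $\Chat\sm J$. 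Each step checks out (in particular $\ov{V}\cap\ov{\iota(V)}=\bd V\cap\bd\iota(V)$ and the $\iota$-equivariance of the three regions), and your version is self-contained: it never invokes the Lifting lemma, and it uses only that two distinct rays land at $q\neq\pm 1$, so it does not need $q$ (or, implicitly in the paper's reading, $Z(q)$) to be exactly biaccessible. What the paper's route buys in exchange is economy --- the corollary falls out of a lemma it needs anyway, whose covering-space argument also yields the finer uniaccessible/biaccessible dichotomy for lifts --- while yours isolates the symmetry $Z(\zeta)=Z(1/\zeta)$ as the real reason the separation is impossible.
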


\begin{lemma}\label{zlift}
Suppose $U$ and $\UT$ are disks with marked centers at $\infty$, with $\pm 2$ on $\bd
U^1$ and $\bd \UT^1$, and with Zhukovski\u{i} preimages $V$ and $\VT$. Let $\varphi: \bd U \to
\bd \UT$ be a homeomorphism which fixes $\pm 2$ and is compatible with the embedding in the sphere. Then there is a unique
homeomorphism $\psi: \bd V \to \bd \VT$ which fixes $\pm 1$, is compatible with the embedding in the sphere, and makes the following diagram commute:
$$
\begin{CD}
\bd V @> \psi >> \bd \VT \\
@V Z VV  @VV Z V \\
\bd U @> \varphi >> \bd \UT
\end{CD}
$$
Moreover, for any pair of Riemann maps $g:(\Delta, \infty) \to (U, \infty)$, $\gT:(\Delta, \infty) \to (\UT, \infty)$ and $h: (\Delta,\infty) \to (V,\infty)$, $\tilde{h}: (\Delta,\infty) \to (\VT,\infty)$ related by $g=Z \circ h$ and $\gT=Z \circ \tilde{h}$, the induced circle homeomorphisms $\varphi^{\circ}=[\gT]^{-1} \circ [\varphi] \circ [g]$ and $\psi^{\circ}=[\tilde{h}]^{-1} \circ [\psi] \circ [h]$ coincide. In particular, $\psi$ is conformally fit if and only if $\varphi$ is.
\end{lemma}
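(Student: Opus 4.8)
The plan is to produce $\psi$ by lifting a sphere-homeomorphism extension of $\varphi$ through the branched double cover $Z$, and then to read the identity $\psi^{\circ}=\varphi^{\circ}$ off the functoriality of Carath\'eodory's prime-end correspondence. First extend $\varphi$ to an orientation-preserving homeomorphism $\Phi:\Chat\to\Chat$ with $\Phi(U)=\UT$; since $\varphi$ fixes $\pm 2$, so does $\Phi$, hence $\Phi$ restricts to a self-homeomorphism of the twice-punctured sphere $\Chat\sm\{-2,2\}$ fixing each puncture. As $Z:\Chat\sm\{-1,1\}\to\Chat\sm\{-2,2\}$ is a connected two-sheeted covering (its only critical points $\pm 1$ map to $\pm 2$), the composition $\Phi\circ Z$ is a two-sheeted covering whose image on $\pi_1$ is the index-two subgroup carried by $Z$, so it lifts through $Z$: there is a homeomorphism $\Psi$ of $\Chat\sm\{-1,1\}$ with $Z\circ\Psi=\Phi\circ Z$ (a lift of a degree-two covering through the degree-two covering $Z$ has degree one). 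Because $Z(\Psi(\zeta))=\Phi(Z(\zeta))\to\pm 2$ as $\zeta\to\pm 1$ while $Z^{-1}(\pm 2)=\{\pm 1\}$, the map $\Psi$ extends to an orientation-preserving homeomorphism of $\Chat$ fixing $\pm 1$, still with $Z\circ\Psi=\Phi\circ Z$ everywhere.

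There are exactly two such lifts, interchanged by the deck involution $\iota(\zeta)=1/\zeta$, and necessarily $\Psi\circ\iota=\iota\circ\Psi$. Since $\Psi(V)$ is connected, open, and satisfies $Z(\Psi(V))=\Phi(U)=\UT$, it lies in one of the two disjoint components $\VT,\ \iota(\VT)$ of $Z^{-1}(\UT)$; comparing with $\Psi(V\sqcup\iota(V))=Z^{-1}(\UT)=\VT\sqcup\iota(\VT)$ forces $\Psi(V)$ to be exactly one of them, and after replacing $\Psi$ by $\iota\circ\Psi$ if necessary we may assume $\Psi(V)=\VT$. Then $\Psi(\infty)\in\VT$, and $Z(\Psi(\infty))=\infty$ gives $\Psi(\infty)\in\{0,\infty\}$; as $0=\iota(\infty)\in\iota(\VT)$ is disjoint from $\VT$, we get $\Psi(\infty)=\infty$. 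Put $\psi:=\Psi|_{\bd V}$. Then $\psi:\bd V\to\bd\VT$ is a homeomorphism fixing $\pm 1$, compatible with the embedding (being the restriction of $\Psi$), and $Z\circ\psi=\Phi\circ Z|_{\bd V}=\varphi\circ Z|_{\bd V}$, so the square commutes.

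For the circle-map identity, recall that the biholomorphisms $Z|_V:V\to U$ and $Z|_\VT:\VT\to\UT$ induce prime-end homeomorphisms, which we also write $[Z]$, and that the relations $g=Z\circ h$, $\gT=Z\circ\tilde h$ between biholomorphisms give $[g]=[Z]\circ[h]$ and $[\gT]=[Z]\circ[\tilde h]$. Because $Z\circ\Psi=\Phi\circ Z$ holds on all of $\Chat$, a fundamental chain $\{O_n\}$ in $V$ is carried by $[Z]\circ[\psi]$ to $\{Z(\Psi(O_n))\}=\{\Phi(Z(O_n))\}$, which is exactly the image of $\{O_n\}$ under $[\varphi]\circ[Z]$; hence $[Z]\circ[\psi]=[\varphi]\circ[Z]$ on $\ET_V$, where, as in \secref{sec:prelim}, $[\varphi]$ is independent of the chosen extension $\Phi$. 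Therefore
$$
\psi^{\circ}=[\tilde h]^{-1}\circ[\psi]\circ[h]=[\tilde h]^{-1}\circ[Z]^{-1}\circ[\varphi]\circ[Z]\circ[h]=[\gT]^{-1}\circ[\varphi]\circ[g]=\varphi^{\circ}.
$$
Since $\varphi$ (resp.\ $\psi$) is conformally fit exactly when $\varphi^{\circ}$ (resp.\ $\psi^{\circ}$) is a rigid rotation for one, hence every, choice of Riemann maps, the last assertion of the lemma follows.

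It remains to prove uniqueness. If $\psi'$ is another homeomorphism with the stated properties, then $\lambda:=\psi'\circ\psi^{-1}:\bd\VT\to\bd\VT$ is a homeomorphism fixing $\pm 1$, compatible with the embedding, with $Z\circ\lambda=Z$ on $\bd\VT$ (using $Z\circ\psi=\varphi\circ Z=Z\circ\psi'$); thus $\lambda(w)\in\{w,\iota(w)\}$ for every $w\in\bd\VT$. The set $\{w:\lambda(w)=\iota(w)\}$ is closed; if it had nonempty relative interior it would contain a relatively open $W\subseteq\bd\VT$ avoiding $\pm 1$ on which $\lambda$ agrees with the holomorphic involution $\iota$, and since $\iota$ carries the $\VT$-side of $\bd\VT$ to the opposite $\iota(\VT)$-side, $\lambda$ would reverse orientation near $W$, contradicting that $\lambda$, being compatible with the embedding, induces an orientation-preserving map of prime ends. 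Hence $\{w:\lambda(w)=w\}$ is closed and dense in $\bd\VT$, so it equals $\bd\VT$, i.e.\ $\lambda=\text{id}$ and $\psi'=\psi$. The conceptual crux is the construction in the first two paragraphs --- obtaining the lift $\Psi$ and, above all, normalizing it so that $\Psi(V)=\VT$ and $\Psi(\infty)=\infty$; once the global equivariance $Z\circ\Psi=\Phi\circ Z$ is secured the identity $\psi^{\circ}=\varphi^{\circ}$ is purely formal, while the orientation bookkeeping in the uniqueness step needs some care because $\bd V$ need not be a Jordan curve.
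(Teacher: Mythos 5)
Your construction of $\psi$ is essentially the paper's: extend $\varphi$ to an orientation-preserving sphere homeomorphism $\Phi$ fixing $\pm 2$, lift $\Phi\circ Z$ through the degree-two covering $Z$ of the twice-punctured sphere (the index-two subgroup of the infinite cyclic fundamental group is unique, so the lifting criterion is automatic), extend over $\pm 1$, and arrange $\Psi(V)=\VT$, $\Psi(\infty)=\infty$. The paper simply chooses the lift fixing $\infty$ at the outset and notes that $\Psi(V)$ is then the component of $Z^{-1}(\UT)$ containing $\infty$, whereas you take an arbitrary lift and normalize afterwards with the deck involution; both are fine.

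You diverge at the identity $\psi^{\circ}=\varphi^{\circ}$, and here your argument has a soft spot. You ``recall'' that the biholomorphism $Z|_V$ induces a prime-end homeomorphism that acts on fundamental chains by direct image and satisfies $[g]=[Z]\circ[h]$. The paper never sets up such a functor, and the chain-level statement is not free of charge: $Z$ is injective on $V$ but two-to-one on $\ov V$, so the closure of the image of a cross-cut $\eta_n$ can be a Jordan curve rather than an arc when the endpoints of $\eta_n$ are exchanged by $\zeta\mapsto 1/\zeta$; thus $\{Z(O_n)\}$ need not literally be a fundamental chain, and one must also reconcile the chain description with whatever definition of $[Z]$ one adopts before feeding the result into the chain-level definition of $[\varphi]$. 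All of this is repairable, but the paper's route is shorter and uses only tools already in place: since $g=Z\circ h$ and $\gT=Z\circ\tilde h$ persist under radial limits, $g$ and $h$ (resp.\ $\gT$ and $\tilde h$) have the same landing angles, and on the landing angles of $h$ one computes $\gT\circ\psi^{\circ}=Z\circ\tilde h\circ\psi^{\circ}=Z\circ\psi\circ h=\varphi\circ Z\circ h=\varphi\circ g$, whence $\psi^{\circ}=\varphi^{\circ}$ by \lemref{EQ}. I recommend replacing your prime-end paragraph by this computation.

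On uniqueness you go beyond the paper (whose proof is silent on it), but the decisive step is not yet a proof: ``$\lambda$ would reverse orientation near $W$'' has no meaning for a map defined only on the compact set $\bd\VT$, whose two ``sides'' need not exist at points of $W$ (biaccessible or wilder points), and converting a relatively open boundary set into an arc of prime ends is precisely the delicate issue when $\bd\VT$ is not a Jordan curve, as you yourself concede. A rigorous version of your orientation idea runs as follows: $\lambda=\psi'\circ\psi^{-1}$ is compatible with the embedding, fixes $\pm1$, and satisfies $Z\circ\lambda=Z$; set $\lambda^{\circ}=[\tilde h]^{-1}\circ[\lambda]\circ[\tilde h]$. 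By \lemref{EQ}, $\tilde h\circ\lambda^{\circ}=\lambda\circ\tilde h$ on landing angles, hence $\gT\circ\lambda^{\circ}=\gT$ there; so $\lambda^{\circ}$ preserves every radial-limit fiber of $\gT$ and fixes the unique angle over the uniaccessible point $1\in\bd\VT$. An orientation-preserving circle homeomorphism with a fixed point cannot transpose a two-point fiber, so $\lambda^{\circ}$ fixes all angles lying over uni- and biaccessible points of $\bd\UT$, a set of full measure and hence dense; therefore $\lambda^{\circ}=\mathrm{id}$, so $\lambda$ fixes every accessible point of $\bd\VT$, and by density and continuity $\lambda=\mathrm{id}$.
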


We call $\psi$ the {\bfit Zhukovski\u{i} lift} of $\varphi$.

\begin{proof}
Extend $\varphi$ to an orientation-preserving homeomorphism $\Phi: (\Chat, \infty) \to (\Chat, \infty)$ fixing $\pm 2$. Both $Z$ and $\Phi \circ Z: \Chat \sm  \{ \pm 1 \} \to \Chat \sm  \{ \pm 2 \}$ are degree $2$ regular coverings. The image of the infinite cyclic fundamental group $\pi_1(\Chat \sm \{ \pm 1 \}, \infty)$ under $\Phi \circ Z$ is evidently the same as its image under $Z$. Hence there is a unique lift $\Psi: \Chat \sm \{ \pm 1 \} \to \Chat \sm \{ \pm 1 \}$ which fixes $\infty$ and satisfies $\Phi \circ Z=Z \circ \Psi$. Applying the same argument to $\Phi^{-1}$ shows that $\Psi$ has a continuous inverse, and hence it extends to a homeomorphism $\Psi : (\Chat, \infty) \to (\Chat, \infty)$ once we define $\Psi(\pm 1) = \pm 1$. Since $\Psi(V)=\VT$ by the construction, the restriction $\psi=\Psi|_{\bd V}$ is the desired homeomorphism. \vs

To verify the last assertion, note that the Riemann maps $g$ and $h$, as well as $\gT$ and $\tilde{h}$, have identical landing angles,  and the relations $g=Z \circ h$ and $\gT=Z \circ \tilde{h}$ persist under radial limits. It follows that on the set of landing angles for $g$,   
$$
\gT \circ \psi^{\circ} = Z \circ \tilde{h} \circ \psi^{\circ} = Z \circ \psi \circ h = \varphi \circ Z \circ h = \varphi \circ g. 
$$
\lemref{EQ} now shows that $\varphi^{\circ}=\psi^{\circ}$. 
\end{proof}

In the situation of the above lemma, let $\omega, \tilde{\omega}$ denote the harmonic measures on $\bd U, \bd \UT$ and $\nu, \tilde{\nu}$ denote the harmonic measures on $\bd V, \bd \VT$, all as seen from $\infty$. Clearly, $\omega = Z_{\ast} \, \nu$ and $\tilde{\omega}=Z_{\ast} \, \tilde{\nu}$. If $\psi_{\ast} \, \nu = \tilde{\nu}$, the functorial property of the push-forward operator easily gives $\varphi_{\ast} \, \omega = \tilde{\omega}$. However, the relation $\varphi_{\ast} \, \omega = \tilde{\omega}$ does not necessarily imply $\psi_{\ast} \, \nu = \tilde{\nu}$. This can be seen from a suitably normalized version of \exref{dom} in which $p$ and $\tilde{p}$ are placed at $2$ and, say, $0$ is placed at $-2$. {\it Thus, the Zhukovski\u{i} lift of a homeomorphism which respects the harmonic measure may fail to do so}.

\section{Holomorphically Moving Disks} \label{sec:hm}

Suppose $\{ U_t \}_{t \in \DD}$ is a family of domains in $\Chat$ with the property that
\begin{equation}\label{op}
\{ (z,t) : z \in U_t \} \subset \Chat \times \DD \quad \text{is open}.
\end{equation}
Equivalently, for every $t_0 \in \DD$ and every compact set $K \subset U_{t_0}$, the inclusion
$K \subset U_t$ holds for all $t$ sufficiently close to $t_0$. We say that a family
of maps $f_t : U_t \to \Chat$ {\bfit depends holomorphically on} $t$ if
whenever $z_0 \in U_{t_0}$, the map $t \mapsto f_t(z_0)$ is holomorphic in some
neighborhood of $t_0$. \vs

We include the following easy result for convenience:

\begin{lemma}\label{easy}
\mbox{}
\begin{enumerate}
\item[(i)]
Let $\{ U_t \}_{t \in \DD}$ and $\{ V_t \}_{t \in \DD}$ be families of
domains in $\Chat$ that satisfy \eqref{op}. Suppose $f_t : U_t \to \Chat$
and $g_t : V_t \to \Chat$ are families of maps depending holomorphically on $t$ such that $g_t(V_t) \subset U_t$ and $f_t$ is holomorphic for each $t$. Then the composition $f_t \circ g_t$ depends holomorphically on $t$. \vs
\item[(ii)]
If, in addition, each $f_t$ is univalent, then the inverse family $f_t^{-1}:
f_t(U_t) \to \Chat$ depends holomorphically on $t$.
\end{enumerate}
\end{lemma}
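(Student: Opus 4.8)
The plan is to route everything through the joint holomorphy of the ``total map'' $F(z,t)=f_t(z)$ on the set $\Omega=\{(z,t)\in\Chat\times\DD:z\in U_t\}$, which is open precisely by \eqref{op}. For part (i) I first note that $F$ is \emph{separately} holomorphic: for fixed $t$ it is holomorphic in $z$ because $f_t$ is holomorphic, and for fixed $z_0\in U_{t_0}$ it is holomorphic in $t$ near $t_0$ by the very definition of holomorphic dependence. By Hartogs's theorem (separate holomorphy implies joint holomorphy), $F$ is jointly holomorphic on $\Omega$ as a map into $\Chat$; in carrying this out one passes to local M\"obius charts on the target, and across the locus where $f_t(z)=\infty$ — which meets every $z$-slice and every $t$-slice in a discrete set, hence is thin — one applies the Riemann extension theorem to $1/F$. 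Granting this, (i) is immediate: for $z_0\in V_{t_0}$ the map $t\mapsto(g_t(z_0),t)$ is holomorphic on a neighborhood of $t_0$ with image in $\Omega$ (since $g_t(z_0)\in g_t(V_t)\subset U_t$), so $t\mapsto (f_t\circ g_t)(z_0)=F(g_t(z_0),t)$ is holomorphic near $t_0$.

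For part (ii), assume in addition that each $f_t$ is univalent, set $W_t=f_t(U_t)$, and consider $\Theta:\Omega\to\Chat\times\DD$, $\Theta(z,t)=(f_t(z),t)$. This map is holomorphic by (i), and in local coordinates its complex Jacobian at $(z,t)$ equals $\partial_z f_t(z)=f_t'(z)$, which is nonzero throughout $\Omega$ because an injective holomorphic map has nowhere-vanishing derivative. Hence $\Theta$ is a local biholomorphism, in particular an open map, so its image $\{(w,t):w\in W_t\}$ is open; that is, $\{W_t\}_{t\in\DD}$ is a family of domains satisfying \eqref{op}.

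It remains to show $t\mapsto f_t^{-1}$ depends holomorphically on $t$. Fix $w_0\in W_{t_0}$ and let $z_0=f_{t_0}^{-1}(w_0)\in U_{t_0}$. In charts around $z_0$ and $w_0$, the function $H(z,t)=f_t(z)-w_0$ is holomorphic near $(z_0,t_0)$ by (i), vanishes at $(z_0,t_0)$, and has $\partial_z H(z_0,t_0)=f_{t_0}'(z_0)\neq 0$. By the holomorphic implicit function theorem there is a holomorphic function $t\mapsto\zeta(t)$ near $t_0$ with $\zeta(t_0)=z_0$ and $H(\zeta(t),t)=0$, i.e.\ $f_t(\zeta(t))=w_0$; injectivity of $f_t$ forces $\zeta(t)=f_t^{-1}(w_0)$. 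Since $w_0\in W_{t_0}$ was arbitrary, $f_t^{-1}$ depends holomorphically on $t$.

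The one genuinely nontrivial input is the joint holomorphy of $F$, i.e.\ Hartogs's theorem together with the routine but slightly fussy bookkeeping needed to accommodate the value $\infty$ in local charts; once that is in place, both parts follow from standard facts (openness of local biholomorphisms and the holomorphic implicit function theorem). If one prefers to avoid the implicit function theorem in the last step, one can instead write $f_t^{-1}(w_0)=\frac{1}{2\pi i}\int_{\partial B(z_0,\rho)}\zeta\,f_t'(\zeta)\,(f_t(\zeta)-w_0)^{-1}\,d\zeta$ over a small circle on which $f_t(\cdot)-w_0$ is nonvanishing for $t$ near $t_0$, and differentiate under the integral sign, using again that the integrand is holomorphic in $(z,t)$.
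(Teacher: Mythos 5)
Your proof is correct, and part (i) is exactly the paper's argument: pass via Hartogs' theorem on separate analyticity to the jointly holomorphic map $f(z,t)=f_t(z)$ on the open set $\{(z,t):z\in U_t\}$ and compose with $t\mapsto (g_t(z_0),t)$ (the paper simply cites Hartogs and does not spell out the chart bookkeeping at $\infty$ that you sketch). For part (ii) you take a genuinely different, and more elementary, route. The paper considers $F(z,t)=(f(z,t),t)$, asserts that this injective holomorphic map of two variables has open image with holomorphic inverse --- a nontrivial several-variables theorem (Osgood-type) which it treats as part of the Hartogs circle of ideas --- and then applies Hartogs once more to read off holomorphic dependence of $f_t^{-1}$ on $t$. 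You instead exploit the triangular form of $\Theta(z,t)=(f_t(z),t)$: its complex Jacobian is $f_t'(z)$, nonvanishing by the one-variable fact that univalent maps have nowhere-zero derivative, so $\Theta$ is a local biholomorphism, which already gives openness of $\{(w,t):w\in W_t\}$; holomorphy of $t\mapsto f_t^{-1}(w_0)$ then comes from the holomorphic implicit function theorem applied to $f_t(z)-w_0$ (with injectivity pinning down the branch), or equivalently from your Cauchy-integral formula for the inverse. What your version buys is self-containedness: it needs only the implicit function theorem (or differentiation under a contour integral) rather than the injective-holomorphic-implies-biholomorphic theorem in $\CC^2$; what the paper's version buys is brevity, since citing that theorem disposes of openness and holomorphy of the inverse in one stroke. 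Both arguments handle the point at $\infty$ only up to routine M\"obius-chart bookkeeping, so you are not losing rigor relative to the paper there.
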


\begin{proof}
(i) Let $U=\{ (z,t) : z \in U_t \}$. By a classical theorem of Hartogs \cite{Na}, $f_t : U_t \to \Chat$ depends holomorphically on $t$ if and only if the map $f: U \to \Chat$ defined by $f(z,t)=f_t(z)$ is holomorphic as a function of two complex variables. Thus, for each $z_0 \in V_{t_0}$ the map $t \mapsto (f_t \circ g_t)(z_0)$ is holomorphic in a neighborhood of $t_0$ since it is the composition of the holomorphic map $t \mapsto (g_t(z_0),t)$ followed by $f$. \vs

(ii) Let $W_t=f_t(U_t)$ and $W = \{ (z,t) : z \in W_t \}$ and note
that by Hartogs' theorem, the map $F : U \to W$ defined by $F(z,t)=(f(z,t),t)$ is injective and holomorphic. Hence $W$ is open and the inverse $F^{-1}$ is holomorphic. This, by another application of Hartogs, shows that $f_t^{-1}:W_t \to \Chat$ depends holomorphically on $t$.
\end{proof}

A {\bfit holomorphic motion} of a set $X_0 \subset \Chat$ over the
unit disk $\DD$ is a family of injections $\varphi_t : X_0 \hookrightarrow \Chat$ which depends holomorphically on the parameter $t \in \DD$ and reduces to the identity map at $t=0$. Explicitly,
\begin{enumerate}
\item[(i)]
for each $t \in \DD$, the map $z \mapsto \varphi_t(z)$ is injective on
$X_0$;
\item[(ii)]
for each $z \in X_0$, the map $t \mapsto \varphi_t(z)$ is holomorphic in
$\DD$;
\item[(iii)]
for each $z \in X_0$, $\varphi_0(z)=z$.
\end{enumerate}
We often write $X_t$ for the image $\varphi_t(X_0)$. \vs

The following fundamental results on extending holomorphic motions will be used in this section. The first is a rather easy application of Montel's theorem on normal families:

\begin{theorem}[The $\lambda$-Lemma]\label{ll}
Every holomorphic motion $\varphi_t : X_0 \to X_t$ over $\DD$ has a unique extension to a holomorphic motion $\ov{X}_0 \to \ov{X}_t$ of the closure.
\end{theorem}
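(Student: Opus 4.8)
The plan is to run the classical argument behind the Ma\~{n}\'e--Sad--Sullivan $\lambda$-lemma, whose only substantial ingredient is the hyperbolic geometry of the thrice-punctured sphere. If $X_0$ is finite there is nothing to prove, since then $\ov{X}_0 = X_0$; so assume $X_0$ is infinite and pick distinct $a, b, c \in X_0$. Conjugating the given motion by the holomorphically varying family of M\"{o}bius maps that returns $\varphi_t(a), \varphi_t(b), \varphi_t(c)$ to $a, b, c$ for every $t$, and then by a fixed M\"{o}bius map carrying $a, b, c$ to $0, 1, \infty$, one obtains a holomorphic motion of a set $X_0 \supset \{0, 1, \infty\}$ with $\varphi_t$ fixing $0, 1, \infty$ for all $t$; extending this motion to the closure is equivalent to extending the original one, and likewise for uniqueness. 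Set $\Omega = \Chat \sm \{0, 1, \infty\}$. Since $\varphi_t$ is injective and fixes the three punctures, $t \mapsto \varphi_t(z)$ is a holomorphic map $\DD \to \Omega$ for every $z \in X_0 \sm \{0,1,\infty\}$; and for distinct $z, w \in X_0 \sm \{p, q\}$, where $(p,q)$ is an ordered pair of distinct points of $\{0,1,\infty\}$, the cross-ratio $t \mapsto [\varphi_t(z), \varphi_t(w); p, q]$ is again a holomorphic map $\DD \to \Omega$: it omits $0$ and $\infty$ because $\varphi_t(z), \varphi_t(w) \notin \{p, q\}$, and it omits $1$ because $\varphi_t$ is injective.

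The heart of the matter, and the step I expect to be the main obstacle, is an a priori modulus of continuity for $\varphi_t$ that is uniform for $t$ in compact subsets of $\DD$; everything after it is soft. Let $z_n, w_n \in X_0$ both converge in the spherical metric to a point $z_0 \in \ov{X}_0$, and choose $(p, q)$ to be two points of $\{0, 1, \infty\}$ distinct from $z_0$. Then $g_n(t) = [\varphi_t(z_n), \varphi_t(w_n); p, q]$ is a holomorphic map $\DD \to \Omega$ with $g_n(0) = [z_n, w_n; p, q] \to 1$. By the Schwarz--Pick inequality $g_n(t)$ lies, for $t$ in a fixed compact subset of $\DD$, in a $\rho_\Omega$-ball of bounded radius about $g_n(0)$; since a $\rho_\Omega$-ball of fixed radius shrinks to the point $1$ in the spherical metric as its center tends to the puncture at $1$, it follows that $g_n \to 1$ locally uniformly on $\DD$. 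A direct computation gives $d_{\mathrm{sph}}(u, v) \le C\,|[u,v;p,q] - 1|$ with an absolute constant $C$ whenever $\{p,q\} \subset \{0,1,\infty\}$, so $d_{\mathrm{sph}}(\varphi_t(z_n), \varphi_t(w_n)) \to 0$ locally uniformly in $t$. Granting this, for $z_0 \in \ov{X}_0$ and any sequence $X_0 \ni z_n \to z_0$ the sequence $\{\varphi_t(z_n)\}$ is Cauchy with a limit independent of the sequence, so $\varphi_t(z_0) := \lim_n \varphi_t(z_n)$ gives a well-defined extension $\varphi_t : \ov{X}_0 \to \Chat$; it satisfies the same modulus of continuity, hence is continuous in $z$, and $\varphi_0 = \mathrm{id}$ on $\ov{X}_0$. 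For each fixed $z_0$, $t \mapsto \varphi_t(z_0)$ is a locally uniform limit of the holomorphic functions $t \mapsto \varphi_t(z_n)$, hence holomorphic.

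It remains to see that the extended family is again a holomorphic motion, i.e. that each $\varphi_t$ is injective on $\ov{X}_0$, and that the extension is unique; both follow from Hurwitz's theorem. For injectivity, let $z_0, w_0 \in \ov{X}_0$ be distinct; if one of them lies in $\{0,1,\infty\}$ the claim is immediate from the fact that those points are fixed, combined with a one-variable Hurwitz argument for the other point, so assume neither does. Picking $X_0 \ni z_n \to z_0$ and $X_0 \ni w_n \to w_0$, the functions $s \mapsto [\varphi_s(z_n), \varphi_s(w_n); 0, 1]$ are holomorphic maps $\DD \to \Omega$ omitting $1$, and they converge locally uniformly to $s \mapsto [\varphi_s(z_0), \varphi_s(w_0); 0, 1]$; by Hurwitz this limit either omits $1$ or is identically $1$, and the latter would give $z_0 = w_0$ at $s = 0$, a contradiction, so $\varphi_t(z_0) \neq \varphi_t(w_0)$. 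For uniqueness, suppose $\psi_t$ is another holomorphic motion of $\ov{X}_0$ restricting to the given one on $X_0$ (a holomorphic motion is not required to be continuous in the space variable, so this is not automatic). For $z_0 \in \ov{X}_0 \sm X_0$ and $X_0 \ni z_n \to z_0$, the maps $s \mapsto [\psi_s(z_0), \psi_s(z_n); 0, 1] = [\psi_s(z_0), \varphi_s(z_n); 0, 1]$ are holomorphic maps $\DD \to \Omega$ whose values at $s = 0$ tend to the puncture $1$, so by the Schwarz--Pick argument above they tend to $1$ locally uniformly; hence $d_{\mathrm{sph}}(\psi_s(z_0), \varphi_s(z_n)) \to 0$ locally uniformly, which forces $\psi_s(z_0) = \lim_n \varphi_s(z_n) = \varphi_s(z_0)$ for every $s$, so $\psi_t = \varphi_t$ on $\ov{X}_0$.
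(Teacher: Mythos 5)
Your argument is correct: it is the standard Ma\~{n}\'e--Sad--Sullivan proof, normalizing so that $0,1,\infty$ lie in $X_0$ and move trivially, applying the Schwarz--Pick inequality for the hyperbolic metric of $\Chat \sm \{0,1,\infty\}$ to cross-ratios to get the uniform modulus of continuity, and using Hurwitz for injectivity of the extension and for uniqueness. The paper does not actually prove Theorem~\ref{ll} --- it quotes it as a classical fact, remarking only that it is an easy application of Montel's theorem --- and your hyperbolic-metric/cross-ratio argument is precisely the substance behind that remark, so the two are in agreement.
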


The second is a much deeper result with a more difficult proof \cite{Sl}:

\begin{theorem}[Slodkowski] \label{slod}
Suppose $\varphi_t : X_0 \to X_t$ is a holomorphic motion of a set $X_0 \subset
\Chat$ over $\DD$. Then there is a holomorphic motion $\Phi_t: \Chat \to
\Chat$ of the Riemann sphere such that $\Phi_t|_{X_0} = \varphi_t|_{X_0}$
for all $t \in \DD$. 
\end{theorem}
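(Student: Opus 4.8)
The plan is to follow the standard route to Slodkowski's theorem: reduce to a compact set via the $\lambda$-Lemma, isolate a one-point extension lemma, and then exhaust $\Chat$ by a Zorn's lemma argument. First I would invoke \thmref{ll} to replace $X_0$ by its closure, so that $X_0$ may be assumed compact. After padding with finitely many points if necessary (which is elementary) and post-composing each $\varphi_t$ with the holomorphically varying M\"{o}bius map that carries a preassigned triple $\varphi_t(z_1),\varphi_t(z_2),\varphi_t(z_3)$ back to $0,1,\infty$, I may further assume $\{0,1,\infty\}\subset X_0$ and that these three points are fixed by the motion; then every complementary fiber $\Omega_t=\Chat\sm X_t$ is a hyperbolic plane domain. (At the end one undoes this normalization by post-composing with the holomorphically varying inverse M\"{o}bius family, which changes nothing.)

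The heart of the matter is a one-point extension lemma: if $\psi_t:Y_0\to Y_t$ is a holomorphic motion of a compact set $Y_0$ with $\{0,1,\infty\}\subset Y_0$ and $w_0\in\Chat\sm Y_0$, then there is a holomorphic $f:\DD\to\Chat$ with $f(0)=w_0$ and $f(t)\notin Y_t$ for all $t$; adjoining the orbit $w_0\mapsto f(t)$ then gives a holomorphic motion of $Y_0\cup\{w_0\}$, injectivity being automatic once $f(t)$ avoids $Y_t$. Equivalently, one must produce a holomorphic section through $(w_0,0)$ of the ``complement bundle'' $\{(z,t):z\in\Chat\sm Y_t\}$, whose fibers are hyperbolic. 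One concrete way to build $f$ is through the measurable Riemann mapping theorem with holomorphic dependence on parameters (Ahlfors--Bers): realize $\psi_t$ as the trace on $Y_0$ of a family of quasiconformal maps $\Psi_t$ obtained by solving Beltrami equations $\bar\partial\Psi_t=\mu_t\,\partial\Psi_t$ with $\mu_t$ supported on $\Chat\sm Y_0$ and depending holomorphically on $t$, normalized so that $\Psi_t|_{Y_0}=\psi_t$ and $t\mapsto\Psi_t(z)$ is holomorphic, and then set $f(t)=\Psi_t(w_0)$.

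I expect this one-point extension step to be the main obstacle, and more precisely the part that makes it work over the full disk $\DD$: the naive Bers--Royden / Ahlfors--Weill construction of the $\mu_t$ above only controls things on $\DD_{1/3}$, and upgrading to all of $\DD$ is exactly the delicate content that the paper defers to \cite{Sl} --- it can be carried out either by Slodkowski's polynomial-hull argument or by the Teichm\"{u}ller-theoretic lifting arguments of Earle--Kra--Krushkal. Granting the lemma, the rest is formal. Partially order by inclusion the collection of holomorphic motions $\chi_t:Z_0\to Z_t$ with $Z_0\supseteq X_0$ compact and $\chi_t|_{X_0}=\varphi_t$. Any chain has an upper bound: the motions agree on overlaps, hence define a holomorphic motion of the union $\bigcup Z_0$ (for fixed $t$, injectivity holds because any two points lie in some common $Z_0$, and holomorphy in $t$ is checked pointwise), which extends to the compact set $\ov{\bigcup Z_0}$ by \thmref{ll} while still restricting to $\varphi_t$ on $X_0$ by continuity. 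Zorn's lemma yields a maximal element $\Phi_t:W_0\to W_t$; if $W_0\ne\Chat$, choosing $w_0\in\Chat\sm W_0$ and applying the one-point extension lemma contradicts maximality. Hence $W_0=\Chat$, and (after undoing the M\"{o}bius normalization) $\Phi_t$ is the desired holomorphic motion of $\Chat$ extending $\varphi_t$.
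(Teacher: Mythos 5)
The paper does not prove this theorem at all: it is quoted as a deep external result and attributed to \cite{Sl}, so the relevant comparison is whether your outline actually constitutes a proof. It does not, because the entire analytic content of Slodkowski's theorem is concentrated in precisely the step you leave open: the one-point extension lemma, i.e.\ the existence of a holomorphic section $t \mapsto f(t)$ of the complement bundle $\{(z,t): z \in \Chat \sm Y_t\}$ through a prescribed point $(w_0,0)$ over \emph{all} of $\DD$. Your ``concrete way to build $f$'' is circular: producing quasiconformal maps $\Psi_t$ with Beltrami coefficients $\mu_t$ depending holomorphically on $t$ and satisfying $\Psi_t|_{Y_0}=\psi_t$ is exactly the statement that the motion extends to the sphere, which is what is to be proved; the Bers--Royden/Ahlfors--Weill mechanism you mention only yields this for $|t|<1/3$, and upgrading $1/3$ to $1$ is the polynomial-hull (or Earle--Kra--Krushkal) argument that you explicitly defer to the literature. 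So the attempt reduces the theorem to its own hardest ingredient and then cites it, which is no more than what the paper itself does by invoking \cite{Sl}.

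The outer shell of your argument is the standard one and is essentially sound: reduction to a closed (compact) base via \thmref{ll}, the M\"{o}bius normalization fixing $0,1,\infty$ so the fibers $\Chat \sm Y_t$ are hyperbolic, and an exhaustion argument (though Zorn's lemma is heavier than needed --- since $\Chat$ is separable one can simply adjoin a countable dense set of points one at a time and finish with one more application of \thmref{ll}). But none of this addresses the obstruction; as written, the proposal has a genuine gap at its core, and closing it requires reproducing Slodkowski's actual argument rather than the Ahlfors--Bers construction sketched here.
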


It is an elementary fact that every holomorphic motion $\Phi_t: \Chat \to
\Chat$ is automatically $K_t$-quasiconformal, where $K_t=(1+|t|)/(1-|t|)$. \vs

As a simple application, we record the following not-so-obvious uniqueness result:

\begin{theorem} \label{uniqueness}
A given family $\{ X_t \}_{t \in \DD}$ of sets in $\Chat$ with empty interior is the image of at most one holomorphic motion over $\DD$: if $\varphi_t,\psi_t : X_0 \to X_t$ are both holomorphic motions, then
$\varphi_t=\psi_t$ for all $t \in \DD$.
\end{theorem}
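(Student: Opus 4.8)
The plan is to fix a point $z_0 \in X_0$ and prove $\varphi_t(z_0) = \psi_t(z_0)$ for all $t \in \DD$; since $z_0$ is arbitrary, this yields $\varphi_t = \psi_t$. First I would use \thmref{slod} to extend $\psi_t$ to a holomorphic motion $\Psi_t : \Chat \to \Chat$, so that $\Psi_t|_{X_0} = \psi_t$ and each $\Psi_t$ is $K_t$-quasiconformal with $K_t = (1+|t|)/(1-|t|)$; in particular the complex dilatation satisfies $\| \mu_{\Psi_t} \|_\infty \le |t|$. The object to examine is then
$$
f(t) := \Psi_t^{-1}\bigl( \varphi_t(z_0) \bigr), \qquad t \in \DD .
$$
Since $\varphi_t(z_0) \in X_t = \psi_t(X_0) = \Psi_t(X_0)$ by the hypothesis that the two motions have the same image, we have $f(t) \in X_0$ for every $t$; moreover $f(0) = z_0$, and $f$ is continuous (a holomorphic motion of $\Chat$ and its inverse both depend jointly continuously on their arguments). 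The problem is thus reduced to showing that $f$ is constant.

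The key point is that $f$, although not holomorphic in general, is \emph{quasiregular}: on each disk $\{ |t| < r \}$ with $r < 1$ it is $(1+r)/(1-r)$-quasiregular. Indeed, $\Psi_t(f(t)) = \varphi_t(z_0)$ depends holomorphically on $t$, so $\bd_{\bar t}\bigl(\Psi_t(f(t))\bigr)=0$; differentiating through, and using that $\Psi_t$ depends holomorphically on $t$ and solves a Beltrami equation with coefficient $\mu_{\Psi_t}$, one gets $\bd_{\bar t} f = -\mu_{\Psi_t}(f(t))\,\ov{\bd_t f}$, hence $|\bd_{\bar t} f| \le |t|\,|\bd_t f|$ on $\DD$. (This is essentially the computation behind Sullivan's harmonicity argument, and I would treat it as a standard feature of holomorphic motions.) Now invoke the Sto\"{\i}low factorization on $\{ |t| < r \}$: $f = h \circ \chi$ with $\chi$ a quasiconformal homeomorphism and $h$ holomorphic, so the image of $h$ equals $f(\{|t|<r\}) \subseteq X_0$, which has empty interior. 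A non-constant holomorphic map is open, so $h$ must be constant; therefore $f$ is constant on $\{ |t| < r \}$ for every $r<1$, so $f \equiv z_0$ on $\DD$, and $\varphi_t(z_0) = \Psi_t(f(t)) = \Psi_t(z_0) = \psi_t(z_0)$ for all $t$.

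I expect the one genuinely delicate step to be the quasiregularity of $f$. The formal computation above is instantaneous, but making it rigorous requires care, because a holomorphic motion of $\Chat$ is merely quasiconformal — not smooth — in the space variable, so $\Psi_t$ need not be classically differentiable at the points $f(t)$, which may form a set of measure zero. The honest route is to work with distributional derivatives together with the $W^{1,2}_{\mathrm{loc}}$-theory of quasiconformal maps and their inverses, or simply to cite the known fact that $t \mapsto g_t^{-1}(w(t))$ is locally quasiregular whenever $\{g_t\}$ is a holomorphic motion of $\Chat$ and $w$ is holomorphic. The rest — Sto\"{\i}low factorization, the open mapping theorem, and the reduction to constancy of $f$ — is routine. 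It is worth pointing out that the hypothesis of empty interior is used exactly once, to force the holomorphic factor $h$ to be constant; the statement genuinely fails for families of disks, where a disk carries many nontrivial holomorphic motions onto itself.
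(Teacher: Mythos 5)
Your strategy is genuinely different from the paper's and its skeleton is sound, but the load-bearing step is exactly the one you wave at: the local quasiregularity of $f(t)=\Psi_t^{-1}(\varphi_t(z_0))$. The chain-rule derivation of $\partial_{\bar t}f=-\mu_{\Psi_t}(f(t))\,\ov{\partial_t f}$ is purely formal: $\Psi_t$ is only quasiconformal in $z$, its pointwise derivatives need not exist along the curve $t\mapsto f(t)$ (which may have measure zero), and--more seriously--the $W^{1,2}_{\mathrm{loc}}$ regularity of $f$ that the distributional Beltrami inequality presupposes is precisely what is in question. Nor can you ``simply cite'' the fact; it is true, but it is not in the paper's toolkit and needs an argument. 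A workable route: reduce to normalized solutions $\Phi^{\mu_t}$ of the Beltrami equation (absorbing a holomorphically varying M\"obius factor into $w$), mollify the coefficients in $z$ to get $\mu_t^\ve$ smooth in $z$, holomorphic in $t$, with $\|\mu^\ve_t\|_\infty\le |t|$; for smooth coefficients the Ahlfors--Bers solutions are jointly smooth, the implicit function theorem makes $f^\ve$ smooth, and your computation is then legitimate, giving $(1+r)/(1-r)$-quasiregularity on $|t|<r$; finally $f^\ve\to f$ locally uniformly and locally uniform limits of $K$-quasiregular maps are $K$-quasiregular. With that lemma in hand, your continuity remark, the Sto\"{\i}low factorization, and the open-mapping/empty-interior argument do finish the proof (one cosmetic point: $f$ maps into $\Chat$, so either normalize $\infty$ away or invoke openness and discreteness of nonconstant quasiregular maps directly).

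By contrast, the paper's proof needs none of this machinery: extend \emph{both} motions to the sphere by Slodkowski's \thmref{slod}, and patch them, using $\Phi_t$ on $X_0$ and $\Psi_t$ on $\Chat\sm X_0$. Since each extension carries $X_0$ onto $X_t$ and hence $\Chat\sm X_0$ onto $\Chat\sm X_t$, the patched map is again a holomorphic motion of all of $\Chat$, hence $K_t$-quasiconformal and in particular continuous; as $\Chat\sm X_0$ is dense (this is where empty interior enters, playing the role your open mapping theorem plays), the patched map agrees with $\Psi_t$ everywhere, so $\varphi_t=\psi_t$ on $X_0$. So both arguments use Slodkowski and use the empty-interior hypothesis exactly once, but the paper's interpolation trick replaces your quasiregular-analysis input (Beltrami inequality, Sto\"{\i}low, Reshetnyak-type openness) with the elementary continuity of sphere motions. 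What your approach buys is a pointwise, quantitative statement about the ``leaf coordinate'' $t\mapsto\Psi_t^{-1}(w(t))$ of any holomorphic transversal, which is of independent interest; what it costs is a nontrivial regularity lemma that you must either prove as above or locate a genuine reference for.
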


\begin{proof}
Use Slodkowski's \thmref{slod} to extend $\varphi_t$ and $\psi_t$ to
holomorphic motions $\Phi_t$ and $\Psi_t$ of the whole sphere. The map
$$
z \mapsto
\begin{cases}
\Phi_t(z) & z \in X_0 \\
\Psi_t(z) & z \in \Chat \sm X_0
\end{cases}
$$
is then a holomorphic motion of the sphere. By continuity, $\varphi_t(z)=\psi_t(z)$ for all $z \in X_0$.
\end{proof}

This section will study families $\{ U_t \}$ of disks in $\Chat$ and holomorphic motions $\varphi_t : \bd U_0 \to \bd U_t$ over $\DD$ of their boundaries. Such motions impose a form of rigidity on the boundaries since by Slodkowski's theorem they are all quasiconformally equivalent. For instance, if $\bd U_t$ is a quasicircle for some $t$, the same must hold for all $t$. Moreover, for any family of Riemann maps $g_t: \DD \to U_t$ the induced homeomorphisms $\varphi^{\circ}_t = [g_t]^{-1} \circ [\varphi_t] \circ [g_0] : \TT \to \TT$ must be quasisymmetric. To see this, extend $\varphi_t$ to a holomorphic motion $\Phi_t : \Chat \to \Chat$ which takes $U_0$ to $U_t$. The composition $h_t = g_t^{-1} \circ \Phi_t \circ g_0 : \DD \to \DD$ is then quasiconformal for each $t$, so it extends to a quasisymmetric homeomorphism $h_t: \TT \to \TT$. Writing $g_t \circ h_t = \Phi_t \circ g_0$ in $\DD$ and taking radial limits gives $g_t \circ
h_t = \varphi_t \circ g_0$ on the set of landing angles for $g_0$. It follows from
\lemref{EQ} that $h_t=\varphi^{\circ}_t$. \vs

Despite this rigidity, holomorphic motions are flexible and can be easily produced by invoking the measurable Riemann mapping theorem of Morrey-Ahlfors-Bers \cite{Ah}. For any holomorphic motion $\Phi_t: \Chat \to \Chat$ over $\DD$, the {\bfit Beltrami coefficient} $\mu_t = \ov{\bd} \Phi_t / \bd \Phi_t \in L^{\infty}(\Chat)$ is well-defined and has the following properties: (i) $\| \mu_t \|_{\infty} <1$; (ii) $\mu_0=0$; (iii) $\mu_t$ depends holomorphically on the parameter $t \in \DD$. Conversely, any family $\{ \mu_t \}_{t \in \DD}$ of Beltrami coefficients on $\Chat$ which satisfies the above three conditions comes from a holomorphic motion. In fact, let $\Phi_t: \Chat \to \Chat$ be the unique quasiconformal homeomorphism which solves the Beltrami equation $\ov{\bd} \Phi_t / \bd \Phi_t = \mu_t$ and is normalized so that it fixes $0,1,\infty$. According to Ahlfors and Bers, for each $z \in \Chat$ the map $t \mapsto \Phi_t(z)$ is holomorphic. Since $\Phi_0=\text{id}$ by uniqueness, it follows that $\Phi_t$ defines a holomorphic motion of the sphere over $\DD$. This provides a convenient way of embedding any disk boundary $\bd U_0$ in a holomorphic motion: Take any measurable function $\mu$ on $\Chat$ such that $\| \mu \|_{\infty} <1$, consider the holomorphic motion $\Phi_t: \Chat \to \Chat$ generated by the Beltrami coefficients $\mu_t=t \mu$, and set $U_t=\Phi_t(U_0)$. The restriction of $\Phi_t$ to the boundary $\bd U_0$ is then a holomorphic motion $\bd U_0 \to \bd U_t$ over $\DD$. \vs

Let $\{ (U_t,c_t) \}_{t \in \DD}$ be a family of pointed disks in $\Chat$, where the marked center $c_t \in U_t$ depends holomorphically on $t$. We say that a holomorphic motion $\varphi_t: \bd U_0 \to \bd U_t$ is {\bfit trivial} if it extends to a holomorphic motion $\Phi_t: (\Chat,c_0) \to (\Chat,c_t)$ whose Beltrami coefficient $\mu_t$ is supported off $U_0$. In other words, for each $t \in \DD$ the restriction $\Phi_t : (U_0,c_0) \to (U_t,c_t)$ should be a biholomorphism. Trivial motions of a disk boundary $\bd U_0$ are easily produced: Simply choose the initial Beltrami coefficient $\mu$ in the above construction to be supported on $\Chat \sm U_0$, so every $\mu_t=t \mu$ vanishes in $U_0$. \vs

We are now ready to state our main theorem which ties in several of the notions introduced earlier: 

\begin{namedtheorem}[Main]
Let $\{ (U_t, c_t) \}_{t \in \DD}$ be a family of pointed disks in $\Chat$, where the center $c_t$ depends holomorphically on $t$. Suppose the boundaries of these disks undergo a holomorphic motion $\varphi_t: \bd U_0 \to \bd U_t$ over $\DD$. Then the following conditions are equivalent: \vs
\begin{enumerate}
\item[(i)]
There is a family of Riemann maps $g_t : (\DD,0) \to (U_t,c_t)$ which depends
holomorphically on $t$. \vs
\item[(ii)]
$\varphi_t: \bd U_0 \to \bd U_t$ is a trivial motion in the sense that it extends to a holomorphic motion $\Phi_t: (\Chat, c_0) \to (\Chat,c_t)$ such that the restriction $\Phi_t: (U_0,c_0) \to (U_t,c_t)$ is a biholomorphism for each $t$. \vs
\item[(iii)]
$\varphi_t : \bd U_0 \to \bd U_t$ is conformally fit for each $t$. \vs
\item[(iv)]
There is an irrational $\theta \in \RR/\ZZ$ for which the intrinsic rotation $\rott: (U_t,c_t) \to
(U_t,c_t)$ depends holomorphically on $t$. \vs
\item[(v)]
$\varphi_t: \bd U_0 \to \bd U_t$ respects the harmonic measure as seen from the center: $(\varphi_t)_{\ast} \omega_0 = \omega_t$ for each $t$. \vs
\item[(vi)]
The map $t \mapsto \log \rad(U_t,c_t)$ is harmonic in $\DD$. 
\end{enumerate}
\end{namedtheorem}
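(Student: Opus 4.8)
The plan is to verify enough implications to collapse the six conditions into one equivalence class, splitting the work into a ``holomorphic‑motion'' block that handles (i)--(iv) and a ``potential‑theoretic'' block that attaches (v) and (vi). Throughout I fix a Riemann map $g_0:(\DD,0)\to(U_0,c_0)$.

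\emph{First block.} I would start with the implications that fall straight out of \lemref{easy}: (ii) $\Rightarrow$ (i) by $g_t:=\Phi_t\circ g_0$; (i) $\Rightarrow$ (iv) by $\rott=g_t\circ\rt\circ g_t^{-1}$ (for every $\theta$, in fact); and (i) $\Rightarrow$ (vi) since $\log\rad(U_t,c_t)=\log|g_t'(0)|$ with $t\mapsto g_t'(0)$ holomorphic and nowhere zero. For (i) $\Rightarrow$ (iii) I would extend $\varphi_t$ by \thmref{slod} to a holomorphic motion $\hat\Phi_t$ of $\Chat$ carrying $U_0$ onto $U_t$, observe that $h_t:=g_t^{-1}\circ\hat\Phi_t\circ g_0$ is (by \lemref{easy}) a holomorphic motion of $\DD$ whose $\lambda$-Lemma extension restricts to a holomorphic motion of $\TT$, and note that a holomorphic map of $\DD$ into $\TT$ is constant, so $h_t|_{\TT}=\mathrm{id}$; since $h_t|_{\TT}=\varphi^{\circ}_t$, this is conformal fitness. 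Next, (i) $\Rightarrow$ (ii): using (iii) just proved, $\Psi_t:=g_t\circ g_0^{-1}$ is a holomorphically varying family of biholomorphisms $(U_0,c_0)\to(U_t,c_t)$ with radial limit map $\varphi_t$, so $\Psi_t$ on $U_0$ together with $\varphi_t$ on $\bd U_0$ is a holomorphic motion of $\ov{U_0}$ (only injectivity is needed, not continuity in the space variable), and a Slodkowski extension of it is the trivial motion of (ii). The real work in this block is (iii) $\Rightarrow$ (i): for each $t$, \thmref{adan} gives the biholomorphism $\Psi_t:(U_0,c_0)\to(U_t,c_t)$, and I must show it varies holomorphically; I would do this by taking a Slodkowski extension of $\varphi_t$, switching off its complex dilatation on $U_0$, solving the resulting Beltrami equation with a normalization matching the original extension off $U_0$, and invoking the Ahlfors--Bers theorem (the solution is then conformal on $U_0$, equals $\Psi_t$ there, and depends holomorphically on $t$). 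Finally (iv) $\Rightarrow$ (i) is Sullivan's argument in the guise that the normalized linearization
$$
\lambda_t(z)=c_t+\lim_{n\to\infty}\frac1n\sum_{k=0}^{n-1}e^{-2\pi i k\theta}\bigl(\rott^{\circ k}(z)-c_t\bigr)
$$
is a locally uniform limit of maps holomorphic in $t$, hence a holomorphically varying biholomorphism of $(U_t,c_t)$ onto $(c_t+\DD_{R_t},c_t)$ with $R_t=\rad(U_t,c_t)$ and unit derivative at $c_t$; its boundary values have modulus $R_t$, so $\log R_t$ is harmonic, $R_t=|e^{H(t)}|$ for holomorphic $H$, and $g_t(z):=\lambda_t^{-1}\bigl(c_t+e^{H(t)}z\bigr)$ is the family in (i). (Irrationality of $\theta$ is what forces $\lambda_t$ to linearize $\rott$ to $\rt$ and not to some other rotation, and it is essential: for rational $\theta$ condition (iv) is nearly vacuous.)

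\emph{Second block.} After conjugating by a holomorphically varying family of M\"obius maps I may assume $c_t\equiv\infty$; this alters $\log\rad(U_t,c_t)$ only by the logarithm of the modulus of a nowhere-zero holomorphic function, so it preserves (v) and (vi) and carries holomorphic motions to holomorphic motions. Then $\omega_t$ is the equilibrium measure of $\bd U_t$ and $-\log\rad(U_t,\infty)=I_t(\omega_t)=\min_{\mu}I_t(\mu)$, where $I_t(\mu)=\iint\log|z-w|^{-1}\,d\mu(z)\,d\mu(w)$ runs over probability measures on $\bd U_t$ with a unique minimizer. Since $\varphi_t$ is a holomorphic motion, $t\mapsto\log|\varphi_t(z)-\varphi_t(w)|$ is harmonic for each fixed $z\neq w$; because $\omega_0$ is atomless (F.\ and M.\ Riesz) and $\varphi_t$ is quasiconformal (keeping the energy finite), the function $H(t):=I_t\bigl((\varphi_t)_{\ast}\omega_0\bigr)$ is harmonic in $t$. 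If (v) holds then $-\log\rad(U_t,\infty)=H(t)$, giving (vi). Conversely, if (vi) holds, then $H(t)\ge-\log\rad(U_t,\infty)=:h(t)$ with $H,h$ harmonic and $H(0)=h(0)$ (at $t=0$, $(\varphi_0)_{\ast}\omega_0=\omega_0$ is the equilibrium measure), so the minimum principle forces $H\equiv h$; hence $(\varphi_t)_{\ast}\omega_0$ attains the minimal energy and equals $\omega_t$ by uniqueness, which is (v). Finally, (iii) $\Rightarrow$ (v) is just \thmref{HM1}(i) applied for each $t$.

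\emph{Closing the loop.} The last step, (v) $\Rightarrow$ (i), is the most delicate, because \thmref{HM1}(ii) cannot be applied to $\varphi_t$ directly when $\omega_0(\bd U_0^2)>0$. I would pass to Zhukovski\u{i} preimages $V_0,V_t$ (\secref{sec:zdc}), which turn prescribed biaccessible points into uniaccessible ones and, by \eqref{r=r}, preserve the conformal radius; hence (vi) --- and so, by the equivalence just proved, also (v) --- holds for the Zhukovski\u{i} lift $\psi_t:\bd V_0\to\bd V_t$. (This is precisely the feature that fails in the static setting, and is what makes the argument run.) Comparing, through the Zhukovski\u{i} covering, the refined measures $\alpha_0,\beta^{\pm}_0$ on $\bd U_0$ (\secref{sec:meas}) with the intrinsic $\alpha$-measures of $V_0$, which $\psi_t$ respects because it respects the harmonic measure of $V_0$, I would obtain $(\varphi_t)_{\ast}\alpha_0=\alpha_t$ and $(\varphi_t)_{\ast}\beta^{\pm}_0=\beta^{\pm}_t$; then \thmref{mtadapt} gives conformal fitness of $\varphi_t$, i.e.\ (iii), whence (i) by the implication (iii) $\Rightarrow$ (i) above. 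At that point (i) $\Leftrightarrow$ (ii) $\Leftrightarrow$ (iii) $\Leftrightarrow$ (iv), (iii) $\Rightarrow$ (v) $\Rightarrow$ (i), and (v) $\Leftrightarrow$ (vi), so all six are equivalent. I expect the pairing of the potential-theoretic identity (v) $\Leftrightarrow$ (vi) with the Zhukovski\u{i} device in (v) $\Rightarrow$ (i) to be the crux of the whole theorem --- these carry the genuinely new content, while the first block is essentially classical holomorphic-motion technology together with Sullivan's argument.
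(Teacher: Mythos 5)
Your proposal breaks down at the implication (iii) $\Longrightarrow$ (i), which is load-bearing in your scheme (it is also how (v) and (vi) are supposed to return to (i)). The argument you sketch --- take a Slodkowski extension $\hat\Phi_t$ of $\varphi_t$, replace its Beltrami coefficient $\mu_t$ by $\mu_t\cdot 1_{\Chat\sm U_0}$, solve by Ahlfors--Bers to get a holomorphic motion $G_t$ conformal on $U_0$, and claim $G_t|_{U_0}=\Psi_t$ --- does not work as stated. Erasing the dilatation inside $U_0$ changes the boundary values: $\hat\Phi_t\circ G_t^{-1}$ is conformal only off $G_t(\ov{U}_0)$, not globally M\"obius, so there is no reason that $G_t$ agrees with $\varphi_t$ on $\bd U_0$, maps $U_0$ onto $U_t$, or coincides with the fitted biholomorphism $\Psi_t$ from \thmref{adan}. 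Tellingly, your sketch of this step never uses the hypothesis of conformal fitness; if the construction did what you claim, it would show that \emph{every} holomorphic motion of $\bd U_0$ is trivial, contradicting the ellipse example $\varphi_t(z)=z+t\bar z$ in the introduction. The missing analytic ingredient is the one the paper supplies: conformal fitness gives $g_t=\varphi_t\circ g_0$ a.e.\ on $\TT$, so the boundary radial values of $g_t$ move holomorphically in $t$, and after a M\"obius normalization and a lift to the universal cover of $\Chat\sm\{0,1,\infty\}$ (to gain boundedness) the Poisson integral formula upgrades this to holomorphy of $t\mapsto g_t(z)$ for every $z$ (the paper's last theorem gives an alternative via F.\ and M.\ Riesz plus Morera). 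Without some argument of this kind, (iii), (v), (vi) are never reconnected to (i) in your proof.

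Two further steps are softer than you present them, though they are in the right spirit. In (iv) $\Longrightarrow$ (i), your Birkhoff-average linearization $\lambda_t$ is a reasonable substitute for Sullivan's invariant-curve argument, but the assertion ``its boundary values have modulus $R_t$, so $\log R_t$ is harmonic'' is exactly the nontrivial point and is not justified; the paper avoids needing it by normalizing the Riemann maps of the invariant curves at the marked orbit points and using the Poisson formula together with Vitali--Porter. In (v) $\Longrightarrow$ (iii), your idea of transferring (vi) to the Zhukovski\u{\i} cover through the equivalence (v) $\Longleftrightarrow$ (vi), rather than pushing the harmonic measure through the covering (which can fail, as the paper warns at the end of \secref{sec:zdc}), matches the paper; but a single cover only renders uniaccessible those biaccessible points whose ray pairs separate the two chosen marked points, so ``comparing the refined measures'' to obtain $(\varphi_t)_\ast\alpha_0=\alpha_t$ and $(\varphi_t)_\ast\beta_0^{\pm}=\beta_t^{\pm}$ and then citing \thmref{mtadapt} is not a complete argument. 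The paper instead fixes each angle $a_0$, adapts the affine normalization and the cover to the pair $(s_0,\,q_0=g_0(a_0))$, and uses \corref{joojoo} to show $m([1,a_t])=m([1,a_0])$, hence $\varphi_t^{\circ}=\mathrm{id}$ directly. On the positive side, your first-block arguments (ii) $\Rightarrow$ (i), (i) $\Rightarrow$ (iv), and especially (i) $\Rightarrow$ (iii) via the $\lambda$-Lemma extension of $h_t=g_t^{-1}\circ\hat\Phi_t\circ g_0$ and the constancy of holomorphic maps into $\TT$ (identified with $\varphi_t^{\circ}$ by \lemref{EQ}) are correct and slightly slicker than the paper's route, and your treatment of (v) $\Longleftrightarrow$ (vi) coincides with the paper's equilibrium-measure argument.
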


Before we begin the proof, several remarks are in order: \vs

$\bullet$ The family $\{ g_t \}$ in (i), if exists, must be unique up to a rotation and therefore is completely determined by the choice of $g_0:(\DD,0) \to (U_0,c_0)$. In fact, if $g_t, h_t : (\DD,0) \to (U_t,c_t)$ are both Riemann maps which depend holomorphically on $t$, the composition $h_t^{-1} \circ g_t : (\DD,0) \to (\DD,0)$ is a rigid rotation by the Schwarz lemma, which  depends holomorphically on $t$ by \lemref{easy}, hence must be independent of $t$. \vs

$\bullet$ It is easy to verify that the conditions (i)-(vi) in the above theorem are invariant under M\"{o}bius change of coordinates: If $\{ M_t \}_{t \in \DD}$ is a family of M\"{o}bius maps which depends holomorphically on $t$, then by \lemref{easy} the map $\psi_t = M_t \circ \varphi_t \circ M_0^{-1}: \bd V_0 \to \bd V_t$ defines a holomorphic motion on the boundaries of the disks $V_t=M_t(U_t)$. Moreover, each of the conditions (i)-(vi) holds for $\varphi_t$ if and only if the corresponding condition holds for $\psi_t$. As an example, take two distinct points $p_0, q_0$ on $\bd U_0$, let
$p_t=\varphi_t(p_0)$ and $q_t=\varphi_t(q_0)$, and consider the unique M\"{o}bius map $M_t: \Chat \to
\Chat$ which carries $(p_t,q_t,c_t)$ to $(0,1,\infty)$. The the above construction will then produce a holomorphic motion where the pointed disks have their marked center at $\infty$. \vs

$\bullet$ A holomorphic motion $\varphi_t: \bd U_0 \to \bd U_t$ for which $t \mapsto \log \rad(U_t,c_t)$ is harmonic can always be rescaled so the conformal radius becomes constant. To see this, apply a preliminary M\"{o}bius change of coordinates to arrange $c_t=\infty$ for all $t$. Then find a holomorphic function $f: \DD \to \CC$ such that $\Real (f(t)) = \log \rad(U_t,\infty)$ for all $t \in \DD$ and let $M_t: \Chat \to \Chat$ be the dilation $z \mapsto e^{-f(t)} z$. The boundaries of the rescaled disks $V_t = M_t(U_t)$ undergo the holomorphic motion $M_t \circ \varphi_t \circ M_0^{-1}$ and the conformal radius of the pointed disk $(V_t,\infty)$ is $1$ for all $t$.

\begin{proof}[Proof of the Main Theorem]
We will verify the following implications:

$$
\begin{tabular}{ccccccccc}
(iv) & $\Longleftrightarrow$ & (i) & $\Longrightarrow$ & (ii) & & & & \vs \\
& & & \rotatebox[origin=c]{135}{$\Longrightarrow$} & $\Downarrow$ & & & & \vs \\
& & & & (iii) & $\Longleftrightarrow$ & (v) & $\Longleftrightarrow$ & (vi) \vs
\end{tabular}
$$

\noindent
As noted above, we may assume without loss of generality that $c_t=\infty$ for all $t$. \vs

(i) $\Longrightarrow$ (ii): Use Slodkowski's \thmref{slod} to extend $\varphi_t$ to
a holomorphic motion $\psi_t: (\Chat,\infty) \to (\Chat,\infty)$ which necessarily
takes $U_0$ to $U_t$. The map $\Phi_t: (\Chat,\infty) \to (\Chat,\infty)$ defined by
\begin{equation}\label{P}
\Phi_t = \begin{cases} g_t \circ g_0^{-1} & \text{in} \ U_0 \\ \psi_t & \text{on}
\ \Chat \sm U_0 \end{cases}
\end{equation}
is a holomorphic motion of the sphere which extends $\varphi_t$, fixes $\infty$, and
maps $U_0$ biholomorphically to $U_t$. \vs

(ii) $\Longrightarrow$ (iii): Extend $\varphi_t$ to a holomorphic motion $\Phi_t: (\Chat,\infty) \to (\Chat,\infty)$ which maps $U_0$ biholomorphically to $U_t$. For any Riemann map $g_0: (\Delta, \infty) \to (U_0,\infty)$,
the composition $g_t = \Phi_t \circ g_0 : (\Delta, \infty) \to (U_t,\infty)$ is a conformal isomorphism. Taking
radial limits, it follows that $g_t = \varphi_t \circ g_0$ on the set of
landing angles for $g_0$. \vs

(iii) $\Longrightarrow$ (i): Let the Riemann maps $g_t: (\Delta, \infty) \to
(U_t,\infty)$ be chosen so that for each $t$ the relation $g_t = \varphi_t \circ
g_0$ holds on the set of landing angles for $g_0$. The holomorphic dependence of $g_t$ on $t$ is then a straightforward consequence of the Poisson integral formula once we normalize maps and disks properly.
Pick distinct points $p_{1,0}, p_{2,0}, p_{3,0}$ on $\bd U_0$, let
$p_{n,t}=\varphi_t(p_{n,0})$ for $n=1,2,3$, and consider the unique M\"{o}bius map $M_t: \Chat \to
\Chat$ which carries $(p_{1,t}, p_{2,t}, p_{3,t})$ to $(0,1,\infty)$. The composition
$\psi_t = M_t \circ \varphi_t \circ M_0^{-1}$ gives a holomorphic motion of the
disk $V_t=M_t(U_t) \subset \Chat \sm \{ 0, 1, \infty \}$ centered at $c_t =
M_t(\infty)$. The Riemann maps $h_t: (\DD,0) \to (V_t, c_t)$ defined by $h_t(z)= (M_t
\circ g_t)(1/z)$ satisfy $h_t=\psi_t \circ h_0$ on the set of landing angles for $h_0$. It
suffices to show that $h_t$ depends holomorphically on $t$. \vs

To this end, let $\pi: \DD \to \Chat \sm \{ 0, 1, \infty \}$ be a holomorphic
universal covering map. Let $t \mapsto \hat{c}_t$ be any lift of the holomorphic map $t \mapsto c_t$
under $\pi$ and $\hat{h}_t : (\DD, 0) \to (\DD, \hat{c}_t)$ be the corresponding
unique lift of $h_t$. Ignoring the measure-zero set of angles on $\TT$ for which
the radial limit of $h_0$ does not exist or belongs to $\{ 0, 1, \infty \}$, the relation $h_t=\psi_t \circ h_0$ shows that $t \mapsto
h_t(a)$ is holomorphic and takes values in $\Chat \sm \{ 0, 1, \infty \}$ for almost every $a \in \TT$. Lifting under $\pi$, it follows that the radial limit $t \mapsto \hat{h}_t(a)$ is holomorphic and takes values in $\DD$ for almost every $a \in \TT$. As $\hat{h}_t \in L^\infty(\TT)$, the Poisson formula
$$
\hat{h}_t (z) = \frac{1}{2\pi} \int_0^{2\pi} \frac{1-|z|^2}{|e^{is}-z|^2} \, \hat{h}_t(e^{is}) \, ds \qquad (z \in \DD)
$$
holds. Since the integrand depends holomorphically on $t$ for almost every $s$, the map $t \mapsto
\hat{h}_t(z)$ is holomorphic for each $z \in \DD$, so the same must be true of $t
\mapsto h_t(z)=\pi(\hat{h}_t(z))$. \vs

(i) $\Longrightarrow$ (iv): This follows from \lemref{easy} since $\rott = g_t \circ \rt \circ
g_t^{-1}$. \vs

(iv) $\Longrightarrow$ (i): This is essentially Sullivan's argument in \cite{Su}. Use Slodkowski's \thmref{slod} to extend $\varphi_t$ to a holomorphic motion $\psi_t: (\Chat,\infty) \to (\Chat,\infty)$ which maps $U_0$ to $U_t$. Take a sequence $\{ z_{n,0} \}$ in $U_0$ such that $z_{n,0} \to \bd U_0$
as $n \to \infty$. Set $z_{n,t} = \psi_t(z_{n,0}) \in U_t$, so $z_{n,t} \to \bd U_t$
as $n \to \infty$. Let $\Gamma_{n,t} \subset U_t$ be the Jordan curve through $z_{n,t}$
which is invariant under the intrinsic rotation $\rott$. By the assumption (iv) and
\lemref{easy}, the map
$$
\rho_{\theta,0}^{\circ k}(z_{n,0}) \mapsto \rott^{\circ k}(z_{n,t}) \qquad k=1,2,3,\ldots
$$
defines a holomorphic motion of the $\rho_{\theta,0}$-orbit of $z_{n,0}$. By the
$\lambda$-Lemma and irrationality of $\theta$, this motion extends to a holomorphic
motion $\zeta_{n,t} : \Gamma_{n,0} \to \Gamma_{n,t}$. Let $g_{n,t}$ be the unique
Riemann map from $\Delta$ to the connected component of $\Chat \sm
\Gamma_{n,t}$ containing $\infty$, normalized so that
$g_{n,t}(\infty)=\infty$ and $g_{n,t}(1)=z_{n,t}$. It follows from the definition of
the intrinsic rotations that $g_{n,t} = \zeta_{n,t} \circ g_{n,0}$ on the unit circle
$\TT$. In particular, $t \mapsto g_{n,t}(z)$ is holomorphic if $z \in \TT$. An application of the Poisson formula (similar to but easier than the above argument) then shows that $t \mapsto g_{n,t}(z)$ is holomorphic for $z \in \Delta$. For each $t \in \DD$ the
family $\{ g_{n,t} \}_{n \geq 1}$ is normal in $\Delta$ since it omits every value
in $\Chat \sm U_t$, and any limit of it as $n \to \infty$ is a Riemann map $(\Delta,
\infty) \to (U_t,\infty)$. We need to choose these limits consistently to
guarantee they depend holomorphically on $t$. To this end, take any sequence $t_k$
of parameters tending to $0$ and by a diagonal argument find an increasing sequence
$\{ n_j \}$ of integers such that $g_{n_j,t_k}$ converges locally uniformly in
$\Delta$ for each $k$ as $j \to \infty$. Pick distinct points $p_{1,0}, p_{2,0}, p_{3,0}$ on $\bd U_0$ so the three holomorphic maps $t \mapsto \varphi_t(p_{i,0})$ for $i=1,2,3$ have disjoint graphs. Montel's theorem shows that for each $z \in \Delta$ the
family $\{ t \mapsto g_{n,t}(z) \}_{n \geq 1}$ is normal in $\DD$ since their graphs
do not intersect those of $t \mapsto \varphi_t(p_{i,0})$. It follows from
Vitali-Porter's Theorem \cite{Re} that $t \mapsto g_{n_j,t}(z)$ converges locally
uniformly in $\DD$ for every $z \in \Delta$ as $j \to \infty$. Clearly the limit
map $g_t= \lim_{j \to \infty} g_{n_j,t}$ depends holomorphically on $t$. \vs

(iii) $\Longrightarrow$ (v): This follows immediately from \thmref{HM1}. \vs

The proof of the equivalence (v) $\Longleftrightarrow$ (vi) will depend on a
potential theoretic characterization of the harmonic measure which we recall
below (for details, see for example \cite{Ra}). Suppose $U \subset \Chat$ is a disk
containing $\infty$, so $\bd U$ is a compact subset of the plane. The {\bfit energy}
of a Borel probability measure $\mu$ supported on $\bd U$ is defined as the integral
\begin{equation}\label{energy}
E(\mu) = - \iint_{\bd U \times \bd U} \log | z - w | \, d\mu(z) \, d\mu(w).
\end{equation}
The harmonic measure $\omega$ on $\bd U$ as seen from $\infty$ is the unique
{\bfit equilibrium measure} on $\bd U$, i.e., the unique measure which
minimizes energy among all Borel probability measures on $\bd U$ \cite[Theorem 4.3.14]{Ra}:
\begin{equation}\label{min}
E(\omega) = \inf_{\mu} E(\mu)
\end{equation} 
A Green's function argument gives a geometric interpretation for the minimal
energy by relating it to the conformal radius \cite[Theorem 5.2.1]{Ra}:
\begin{equation}\label{e/r}
E(\omega) = \log \rad(U,\infty).
\end{equation}

(v) $\Longrightarrow$ (vi): Since $(\varphi_t)_* \omega_0 = \omega_t$, the
definition \eqref{energy} shows that
$$
E(\omega_t) = - \iint_{\bd U_0 \times \bd U_0}
\log |\varphi_t(z)-\varphi_t(w)| \, d\omega_0(z) \, d\omega_0(w).
$$
The integrand on the right is a harmonic function of $t$, so the same must be
true of $E(\omega_t)$. Since $\log \rad(U_t,\infty) = E(\omega_t)$ by \eqref{e/r}, we
conclude that $t \mapsto \log \rad(U_t,\infty)$ is harmonic in $\DD$. \vs

(vi) $\Longrightarrow$ (v): Consider the push-forward measure $\mu_t=(\varphi_t)_* \omega_0$ on $\bd U_t$ and define
\begin{align*}
h(t) = E(\mu_t) & = - \iint_{\bd U_t \times \bd U_t} \log |z-w| \, d\mu_t(z) \, d\mu_t(w) \\
& = - \iint_{\bd U_0 \times \bd U_0} \log |\varphi_t(z)-\varphi_t(w)| \, d\mu_0(z) \, d\mu_0(w).
\end{align*}
The function $h$ is harmonic in $\DD$ with $h(0)=E(\omega_0)$ since
$\mu_0=\omega_0$. Furthermore, the energy minimizing characterization
\eqref{min} of $\omega_t$ shows that $h(t) \geq E(\omega_t)$ for every $t \in
\DD$. The non-negative harmonic function $t \mapsto h(t) - E(\omega_t)$ assumes the value $0$ at $t=0$, hence must be identically $0$ by the maximum
principle. The uniqueness of the equilibrium measure then shows that
$\mu_t=\omega_t$ for all $t$. \vs

(v) $\Longrightarrow$ (iii): Choose a marked point $s_0 \in \bd U_0^1$ and set
$s_t=\varphi_t(s_0) \in \bd U_t^1$. Let $g_t: (\Delta, \infty) \to (U_t, \infty)$ be
the unique Riemann map which satisfies $g_t(1)=s_t$, and 
$\varphi^{\circ}_t=[g_t]^{-1} \circ [\varphi_t] \circ [g_0]: \TT \to \TT$ be the
induced circle homeomorphism, which fixes $1$ by our normalization. We prove that
$\varphi^{\circ}_t=\text{id}$ for all $t$. \vs

Let $A_t^1$ and $A_t^2$ denote the set of uniaccessible and biaccessible angles for $\bd U_t$ under $g_t$, as defined in \S \ref{sec:meas}. Fix an angle $a_0 \in A_0^1 \cup A_0^2$, $a_0 \neq 1$, and set $q_0=g_0(a_0)$. Then $q_t=\varphi_t(q_0)=g_t(a_t)$, where $a_t=\varphi^{\circ}_t(a_0) \in A_t^1 \cup
A_t^2$. Let $M_t: \Chat \to \Chat$ be the affine map which sends $2$ to
$s_t$ and $-2$ to $q_t$:
$$
M_t: z \mapsto \frac{1}{4}(s_t-q_t) z + \frac{1}{2}(s_t+q_t).
$$
Let $W_t = M_t^{-1}(U_t)$ and consider the induced motion $\zeta_t= M_t^{-1} \circ
\varphi_t \circ M_0 : \bd W_0 \to \bd W_t$. Finally, let $V_t$ be the Zhukovski\u{i} preimage of
$W_t$, as defined in \S \ref{sec:zdc}, and $\psi_t: \bd V_0 \to \bd V_t$ be the Zhukovski\u{i} lift of $\zeta_t$ given by \lemref{zlift}. Observe that by \eqref{r=r},
$$
\rad(V_t,\infty)=\rad(W_t,\infty)=\rad(U_t,\infty)/|M'_t(\infty)|,
$$
so
$$
\log \rad(V_t,\infty) = \log \rad(U_t,\infty) + \log |s_t-q_t| - \log 4.
$$
By the equivalence (v) $\Longleftrightarrow$ (vi) established above, $t \mapsto \log
\rad(U_t,\infty)$ is harmonic in $\DD$. The same is true of $t \mapsto \log \rad(V_t,\infty)$ since $t \mapsto s_t-q_t$ is non-vanishing and holomorphic in $\DD$. Invoking (v) $\Longleftrightarrow$ (vi) once more, we see that the lifted motion $\psi_t$ respects the harmonic measure $\nu_t$ on $\bd V_t$ as seen from $\infty$: $(\psi_t)_* \nu_0 = \nu_t$. \vs

Now let $h_t=Z^{-1} \circ M_t^{-1} \circ g_t : (\Delta, \infty) \to (V_t,\infty)$ be the
corresponding Riemann map of $V_t$, where $Z^{-1}$ is the inverse of the biholomorphic restriction $Z:V_t \to W_t$. Consider the measurable sets
$$ 
X_t = h_t([1,a_t]) \ \subset \bd V_t \qquad (t \in \DD) 
$$
which satisfy $X_t=\psi_t(X_0)$. Decompose $X_t$ up to a set of harmonic
measure zero into the disjoint union of the measurable sets
\begin{align*}
X_t^1 & = X_t \cap \bd V_t^1 \\
X_t^2 & = X_t \cap \bd V_t^2.
\end{align*}
If $p \in X_t^2 \sm \{ -1, 1 \}$, \corref{joojoo} shows that the rays landing at
$p$ do not separate $-1=h_t(a_t)$ from $1=h_t(1)$, which means both angles in $h_t^{-1}(p)$ belong to $[1,a_t]$. It follows that there is a corresponding measurable decomposition $[1,a_t] = I_t^1 \cup I_t^2$ up to a set of Lebesgue measure zero, where
\begin{align*}
I_t^1 & = h_t^{-1}(X_t^1)  \\
I_t^2 & = h_t^{-1}(X_t^2). 
\end{align*}
Note that by \lemref{pe=e}, $\psi_t(X_0^n)=X_t^n$ for $n=1,2$. The relation $(\psi_t)_{\ast} \, \nu_0 = \nu_t$ now shows that
\begin{align*}
m(I_t^1) & = \nu_t(X_t^1) = \nu_0(X_0^1) = m(I_0^1) \\
m(I_t^2) & = \nu_t(X_t^2) = \nu_0(X_0^2)= m(I_0^2).
\end{align*}
Adding these equalities gives $m([1,a_t])=m([1,a_0])$, which shows $a_t=a_0$ for all
$t$. It follows that for every $t \in \DD$, $\varphi_t^{\circ}=\text{id}$ on the full-measure set $A_0^1 \cup A_0^2$, hence on $\TT$ by continuity. \vs
\end{proof}

\begin{theorem}
The conformal fitness condition (iii) in the Main Theorem can be replaced with the following weaker condition: \vs
\begin{enumerate}
\item[(iii$'$)]
There is a continuous family $g_t:(\DD,0) \to (U_t,c_t)$ of Riemann maps and a set $X \subset \TT$ of positive Lebesgue measure such that $g_t= \varphi_t \circ g_0$ on $X$ for all $t \in \DD$.  
\end{enumerate}
\end{theorem}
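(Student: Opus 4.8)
Condition (iii$'$) is implied by (i): if the holomorphic family $g_t$ is chosen as in condition (ii) of the Main Theorem, i.e. $g_t=\Phi_t\circ g_0$ with $\Phi_t|_{U_0}$ a biholomorphism, then $g_t=\varphi_t\circ g_0$ holds on all of $\TT$, so (iii$'$) holds with $X=\TT$. Hence it suffices to establish (iii$'$)$\Longrightarrow$(i). So assume (iii$'$): there is a continuous family of Riemann maps $g_t:(\DD,0)\to(U_t,c_t)$ and a set $X\subset\TT$ with $m(X)>0$ such that $g_t=\varphi_t\circ g_0$ on $X$ for all $t$ (discarding a null set, we may assume every $a\in X$ is a landing angle for $g_0$ and for every $g_t$). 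The starting observation is that for a.e.\ $a\in X$ the map $t\mapsto g_t(a)=\varphi_t(g_0(a))$ is holomorphic, since $\varphi_t$ is a holomorphic motion; thus the given family varies holomorphically ``along $X$'', and the goal is to propagate this to all of $\DD$.

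The plan is to run exactly the argument proving (iii)$\Longrightarrow$(i) in the Main Theorem, but with the Poisson-formula step replaced by an application of the F.~and~M.~Riesz uniqueness theorem. First I would normalize as there: pick three distinct points $p_{1,0},p_{2,0},p_{3,0}$ on $\bd U_0$, set $p_{n,t}=\varphi_t(p_{n,0})$, and let $M_t$ be the M\"obius map --- depending holomorphically on $t$ because the $p_{n,t}$ do and stay distinct --- carrying $(p_{1,t},p_{2,t},p_{3,t})$ to $(0,1,\infty)$. Put $V_t=M_t(U_t)\subset\Chat\sm\{0,1,\infty\}$ and $h_t=M_t\circ g_t:(\DD,0)\to(V_t,M_t(c_t))$, so that $h_t=\psi_t\circ h_0$ on $X$, where $\psi_t=M_t\circ\varphi_t\circ M_0^{-1}$ is a holomorphic motion of $\bd V_0$. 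Let $\pi:\DD\to\Chat\sm\{0,1,\infty\}$ be a holomorphic universal covering, choose a holomorphic lift $t\mapsto\hat c_t$ of the holomorphic map $t\mapsto M_t(c_t)$, and let $\hat h_t:(\DD,0)\to(\DD,\hat c_t)$ be the corresponding lift of $h_t$. Then $\hat h_t$ maps $\DD$ into $\DD$ (so $\|\hat h_t\|_\infty\le 1$) and depends continuously on $t$, and --- by the very same lifting argument used in the Main Theorem --- for a.e.\ $a\in X$ the radial limit $t\mapsto\hat h_t(a)$ exists, lies in $\DD$, and is holomorphic in $t$, being the continuous lift through $\pi$ of the holomorphic arc $t\mapsto\psi_t(h_0(a))$.

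Now fix $z\in\DD$ and a loop $\gamma$ in the parameter disk, and consider
\[
\Xi_\gamma(z)=\oint_\gamma \hat h_t(z)\,dt .
\]
Since $\hat h_t$ is holomorphic in $z$ with $|\hat h_t|\le 1$ and continuous in $t$, the function $z\mapsto\Xi_\gamma(z)$ is holomorphic on $\DD$ and bounded by the length of $\gamma$, hence lies in $H^\infty(\DD)$. Passing the radial limit inside the integral (using $|\hat h_t|\le 1$ and Fubini along $\gamma$), its radial limit at a.e.\ $a\in\TT$ equals $\oint_\gamma\hat h_t(a)\,dt$, and for a.e.\ $a\in X$ this vanishes because $t\mapsto\hat h_t(a)$ is holomorphic on $\DD$ (and every loop in $\DD$ is null-homotopic). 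Thus $\Xi_\gamma\in H^\infty(\DD)$ has radial limit $0$ on the positive-measure set $X$, so by the classical uniqueness theorem for bounded holomorphic functions $\Xi_\gamma\equiv 0$. As $\gamma$ was arbitrary and $t\mapsto\hat h_t(z)$ is continuous, Morera's theorem applied in the parameter variable shows that $t\mapsto\hat h_t(z)$ is holomorphic for every $z\in\DD$. Therefore $h_t=\pi\circ\hat h_t$, and hence $g_t=M_t^{-1}\circ h_t$, depend holomorphically on $t$ by \lemref{easy}; this is (i).

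I expect the only genuinely technical point to be the same one already settled in the Main Theorem, namely the verification that the a.e.-defined radial limit $t\mapsto\hat h_t(a)$ is holomorphic and $\DD$-valued; everything else above is soft. The conceptual heart of the matter is simply that the F.~and~M.~Riesz theorem upgrades ``holomorphic dependence on $t$ over a positive-measure set of boundary angles'' to ``holomorphic dependence on $t$ everywhere'', which is precisely what allows (iii) to be weakened to (iii$'$).
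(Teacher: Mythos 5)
Your proposal is correct and follows essentially the same route as the paper: integrate $t\mapsto$ (the normalized Riemann map evaluated at $z$) over a closed parameter loop, observe the resulting bounded holomorphic function of $z$ has vanishing radial limits on the positive-measure set $X$ by Cauchy's theorem applied to the holomorphic motion, kill it by the F.\ and M.\ Riesz uniqueness theorem, and recover holomorphy in $t$ via Morera. The only difference is cosmetic: you spell out the M\"obius-plus-universal-cover normalization producing the uniformly bounded lifts $\hat h_t$, which the paper compresses into the phrase ``after various normalizations we may assume the family is uniformly bounded.''
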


\begin{proof}
It suffices to show that (iii$'$) implies (i). As in the proof of (iii) $\Longrightarrow$ (i), after various normalizations we may assume that the family $\{ g_t \}_{t \in \DD}$ is uniformly bounded. Let $\gamma$ be any smooth closed curve in $\DD$ and define
$$
G(z)=\int_{\gamma} g_t(z) \, dt \qquad (z \in \DD).
$$
Then $G$ is holomorphic in $\DD$. Moreover, Lebesgue's dominated convergence theorem shows that for every $a \in X$,
\begin{align*}
G(a) & = \lim_{r \to 1} G(ra) = \lim_{r \to 1} \int_{\gamma} g_t(ra) \, dt \\
& = \int_{\gamma} \lim_{r \to 1} g_t(ra) \, dt = \int_{\gamma} g_t(a) \, dt \\
& = \int_{\gamma} \varphi_t(g_0(a)) \, dt =0,
\end{align*}
where the last equality follows from Cauchy's theorem since $t \mapsto
\varphi_t(g_0(a))$ is holomorphic. Thus, the bounded holomorphic function $G$ has
zero radial limit on the positive-measure set $X$. The theorem of F. and M. Riesz then shows that $G$ must be identically zero in $\DD$. Since $\gamma$ was arbitrary, it follows from Morera's theorem that for each fixed $z$, the map $t \mapsto g_t(z)$ is
holomorphic in $\DD$. 
\end{proof}

\bibliographystyle{amsplain}

\end{document}